\documentclass{article}

\usepackage{graphicx}

\usepackage{lipsum}
\usepackage{amsfonts}
\usepackage{bbm}
\usepackage{graphicx}
\usepackage{caption}
\usepackage{epstopdf}
\ifpdf
  \DeclareGraphicsExtensions{.eps,.pdf,.png,.jpg}
\else
  \DeclareGraphicsExtensions{.eps}
\fi

\usepackage{lscape} %

\usepackage{enumitem}
\setlist[enumerate]{leftmargin=5mm}

\usepackage{amsopn}

\usepackage[linesnumbered,algoruled,vlined,algo2e]{algorithm2e}
\SetAlCapNameFnt{\footnotesize}
\SetAlCapFnt{\footnotesize}

\usepackage{amssymb,amsfonts,amscd,mathtools}

\DeclarePairedDelimiterX\Set[2]{\lbrace}{\rbrace}%
 { #1 \,\delimsize| \,\mathopen{} #2 }

\newcommand{\ck}{\kappa}
\newcommand{\ckeg}{\kappa_{\mathrm{eg}}}
\newcommand{\ckef}{\kappa_{\mathrm{ef}}}
\newcommand{\derate}{\gamma_{\rm dec}}
\newcommand{\inrate}{\gamma_{\rm inc}}
\newcommand{\up}[1]{^{(#1)}}
\newcommand{\ol}{\overline}

\newcommand{\I}{\mathcal{I}}

\newcommand{\wh}{\widehat}
\newcommand{\mc}{\mathcal}
\newcommand{\mb}{\mathbb}
\newcommand{\mR}{\mb{R}}
\newcommand{\bs}[1]{\boldsymbol{#1}}

\newcommand{\E}{\mathbb{E}}

\renewcommand{\P}{\mathbb{P}}
\newcommand{\R}{\mathbb{R}}
\newcommand{\Qx}{Q^{1-\alpha}(x)}
\newcommand{\Qv}[1]{Q^{1-\alpha}(#1)}
\newcommand{\Q}[2]{\wh{Q}^{1-\alpha}_{#1}(#2)}

\newcommand{\norm}[1]{\left\lVert#1\right\rVert}
\newcommand{\abs}[1]{\left\lvert#1\right\rvert}
\newcommand{\parenth}[1]{\left(#1\right)}

\newcommand{\parenthcurl}[1]{\left\{#1\right\}}

\newcommand{\dom}{\mathbb{S}}
\newcommand{\Cdom}{C_{\dom}}
\newcommand{\Kdom}{K_{\dom}}
\newcommand{\gN}{g_{0,N}}
\newcommand{\etaprod}{\eta_1\eta_2}

\newcommand{\inner}[2]{\left\langle #1,#2 \right\rangle}
\newcommand{\minimize}{\operatornamewithlimits{minimize}}
\newcommand{\maximize}{\operatornamewithlimits{maximize}}
\newcommand{\defined}{=}

\newcommand{\mumax}{\mu_{\mathrm{max}}}
\newcommand{\bigcurl}[1]{\left\{#1\right\}}

\usepackage{fullpage,amsthm,xcolor}
\usepackage[breaklinks=true, colorlinks=true, linkcolor={blue!90!black}, citecolor={blue!90!black}, urlcolor={blue!90!black}]{hyperref}
\newtheorem{definition}{Definition}[section]
\newtheorem{theorem}{Theorem}[section]
\newtheorem{lemma}{Lemma}[section]
\newtheorem{corollary}{Corollary}[section]
\newtheorem{remark}{Remark}[section]
\newtheorem{example}{Example}[section]
\newtheorem{assumption}{Assumption}[section]
\newtheorem{proposition}{Proposition}[section]
\usepackage{algorithm}
\usepackage[nameinlink]{cleveref}
\usepackage{xcolor,url,doi}
\newcommand{\subassref}[2]{{\rm\Cref{ass:#1}(\ref{subass:#2})}}
\creflabelformat{assumption}{#2#1#3}
\creflabelformat{definition}{#2#1#3}
\creflabelformat{algocf}{#2#1#3}
\creflabelformat{lemma}{#2#1#3}
\creflabelformat{proposition}{#2#1#3}

\newcommand{\bdm}{\begin{displaymath}}
\newcommand{\edm}{\end{displaymath}}
\newcommand{\beq}{\begin{equation}}
\newcommand{\eeq}{\end{equation}}
\newcommand{\ba}{\begin{aligned}}
\newcommand{\ea}{\end{aligned}}
\DeclarePairedDelimiter{\ceil}{\lceil}{\rceil}

\usepackage{thmtools}
\usepackage{nameref}
\crefname{lemma}{lemma}{lemmas}
\Crefname{lemma}{Lemma}{Lemmas}
\crefname{assumption}{assumption}{assumptions}
\Crefname{assumption}{Assumption}{Assumptions}
\crefname{example}{Ex.}{Ex.}
\crefname{algocf}{alg.}{algs.}
\Crefname{algocf}{Algorithm}{Algorithms}
\Crefname{algocfline}{Line}{Lines}

\usepackage[normalem]{ulem} %

\usepackage{cite}

\newcommand{\email}{\url}
\begin{document}

\title{An Empirical Quantile Estimation Approach to Nonlinear Optimization Problems with Chance Constraints}

\author{Fengqiao Luo\thanks{Uber Technologies (\email{luofqfrank@gmail.com})}
\and Jeffrey Larson\thanks{Argonne National Laboratory, Mathematics and Computer Science Division (\email{jmlarson@anl.gov})}
}

\maketitle

\begin{abstract}
  We investigate an empirical quantile estimation approach to solve chance-constrained nonlinear 
  optimization problems. Our approach is based on the 
  reformulation of the chance constraint as an equivalent quantile constraint to provide stronger 
  signals on the gradient. In this approach, the value of the quantile function 
  is estimated empirically from samples drawn from the random parameters, 
  and the gradient of the quantile function is estimated
  via a finite-difference approximation on top of the quantile-function-value estimation. 
  We establish a convergence theory of this approach
  within the framework of an augmented Lagrangian method for solving general nonlinear constrained 
  optimization problems. The foundation of the convergence analysis is a concentration property
  of the empirical quantile process, and the analysis is divided based on whether
  or not the quantile function is differentiable.
  In contrast to the sampling-and-smoothing approach used in the literature,
  the method developed in this paper does not involve any smoothing function
  and hence the quantile-function gradient approximation is easier to implement and there are
  less accuracy-control parameters to tune. 
  We demonstrate the effectiveness of this approach and compare it with a
  smoothing method for the quantile-gradient estimation.
  Numerical investigation shows that the two approaches are
  competitive for certain problem instances. 
\end{abstract}


\section{Introduction}
We investigate methods for solving nonlinear optimization problems with chance constraints:
\beq\label{opt:CCP}
\underset{x\in\ol{\dom}}{\minimize}\;f(x) \;\; \textrm{s.t.: } \P[c_1(x,\xi)\le 0]\ge 1-\alpha,\quad c_2(x)\le 0,  \tag{CCP}
\eeq
where $\xi:\;\Omega\to\R^{n_0}$ is a random parameter with associated probability space 
$(\Omega,\mc{F},P)$, and $f:\mb{R}^n\to\mb{R}$, $c_1:\mb{R}^n\times\mb{R}^{n_0}\to\mb{R}^{l_1}$, and 
$c_2:\mb{R}^n\to\mb{R}^{l_2}$ are three differentiable functions. The parameter $\alpha\in(0,1)$ is a
risk-tolerance value. For example, $\alpha=0.05$ indicating that the constraint $c_1\le{0}$ is allowed to be violated with 
probability at most 0.05.
The closure $\ol{\dom}$ of an open set $\dom$ is an implicit closed subset in $\R^n$ 
for further definitions of regularity conditions, which can be assumed to be sufficiently large.
The constraints in $c_2(x)$ do not involve any random parameters. 
For the sake of clarity, we focus on analyzing the case of $l_1=1$, 
namely the case with a single disjoint chance constraint. 
It is straightforward to generalize 
the analysis and the algorithms presented 
in this work to deal multiple 
disjoint chance constraints. If there are joint chance constraints, our results
can still be applicable provided regularity conditions hold on the constraint functions,
which is discussed in \Cref{sec:diff-joint-cc}.
In this paper $\norm{\cdot}$ denotes the $\ell_2$ norm in the Euclidean space.

\subsection{Literature review}
Optimization with chance constraints was first investigated in~\cite{charnes1958-stoc-prog-heating-oil} 
as a modeling option that requires the probability of an event (formulated by a set of constraints) 
to satisfy a certain threshold. The introducing of chance constraints extends the formulation power in data-driven
decision-making problems. Imposing chance constraints, however, may not preserve convexity of the feasible region,
and hence the chance-constrained programs (CCPs) are very difficult to solve in general if not computationally intractable. 
The reason is that a chance-constraint function is a compound random variable, and the constraint 
complexity is determined by the dependency of the cumulative distribution function 
of the random variable on the decision variables that parameterize it, which could be highly non-convex.
As an important special case of \eqref{opt:CCP}, when the function $c_1(x,\xi)$ can be decoupled as 
$c^\prime_1(x)-\xi$, the probability function $\P[c_1(x,\xi)\le 0]$ can be quasi-concave under certain conditions
(e.g. $\xi$ follows a joint normal distribution), and hence admitting efficient algorithms for the problem 
given that all deterministic functions are convex. The properties of this special case have been investigated in
the seminal work \cite{prekopa1970-prob-constr-prog}.
The differentiability and the formula of derivatives of the probability function are investigated in
\cite{uryasev1994-deriv-prob-fun-integrals,kibzun1995-diff-prob-func,kibzun1998-diff-prob-func,pflug2005-prob-grad-est,garnier2009-asy-form-deriv-prob-func}, especially for the case 
that the random vector follows a multivariate normal distribution 
\cite{henrion2012-grad-form-linear-chance-constr-gauss,ackooij2014-grad-form-chance-constr-guass}.

In the case that the random parameters follow a general probability distribution, 
the finite sample approximation of the continuous (possibly unknown) probability distributions 
is often used as an empirical approach to solving such CCPs in this case. 
Luedtke and Ahmed~\cite{luedtke2008-SAA-opt-chance-constr} 
investigated the effectiveness of a sample-based approximation of chance constraints 
and the sample complexity for achieving certain probabilistic error bounds. 
Additional statistical properties of the sample-based approximation approach to CCPs
have been studied~\cite{calafiore2005-uncert-conv-prog-rand-sol-conf-level,calafiore2006-scen-appr-rob-contrl,calafiore2011-sample-appr-chance-constr-opt,shapiro2005-scen-approx-chance-constr}.
In the scheme of finite-sample approximation or for the case of scenario-based CCPs,
reformulation and decomposition techniques have been developed to establish 
computationally tractable formulations of the CCPs with the help of discrete variables,
when the scenario constraints are convex~\cite{kucukyavuz2014-decomp-alg-two-stage-chance-constr,lodi2022-nonlinear-chance-constr-appl-hydro-sched,luedtke2014-BnC-decomp-alg-chance-constr-finite-supp}.
Valid inequalities have been derived to strengthen the mixed-integer linear program reformulation of a CCP with linear
scenario constraints~\cite{kucukyavuz2012-mix-set-chance-constr-prog,luedtke2010-IP-LP-chance-constr,xie2018-quant-cuts-chance-constr-opt}. Chance constraints have been incorporated into a 
distributionally robust (DR) optimization framework when the information about the underline probability 
distribution is given through a finite set of samples, and tractable reformulations of the DR chance-constrained
optimization have been investigated~\cite{jiang2016-dr-chance-constr-stoc-prog,liu2022-dr-chance-constr-geom-opt,xie2021-dr-chance-constr-wass} under different families
of divergence or distance functions, for example the $\phi$-divergence and the Wasserstein distance. 

For the case of general nonlinear programs with chance constraints induced by general probability distributions, 
a variety of numerical methods based on sampling and smoothing have been developed to
identify high-quality locally optimal
solutions~\cite{zavala2020-sig-approx-chance-constr-nonlinear-prog,curtis2018-seq-alg-solv-nonlinear-opt-chance-constr,geletu2017-inner-outer-approx-chance-constr-opt,luedtke2021-stoc-approx-frontier-chance-constr-nonlinear-program,wachter2020-chance-constr-prob-smooth-sample-nonlinear-approx}.  
Campi and Garatti~\cite{calafiore2011-sample-appr-chance-constr-opt} 
proposed a sampling-and-discarding approach to select a finite set of samples to
induce constraints that are required to be satisfied, and provided sample complexity results
for satisfying the chance constraint with a given probability. 
Along this direction, a primal-dual stochastic gradient method developed in~\cite{xu2020-primal-dual-stoc-grad-conv-func-constrs} can be applied to handle 
the approximated problem with the large number of constraints induced by samples.
(In each iteration of such an approach, only a single constraint is randomly
sampled to compute the gradient of the augmented Lagrangian.)
Geletu et al.~\cite{geletu2017-inner-outer-approx-chance-constr-opt} proposed a smooth approximation
scheme that approximates the chance-constraint function with (smooth) parametric lower and upper estimation
functions and then solves the parametric approximation problems with nonlinear
programming (NLP) solvers.  
Curtis et al.~\cite{curtis2018-seq-alg-solv-nonlinear-opt-chance-constr} developed 
a sequential algorithm for solving the sample approximation problem of a nonlinear CCP, 
in which the outer iteration updates the penalty parameter while the inner sub-problem
is reformulated as a mixed binary quadratic program. 
Kannan and Luedtke~\cite{luedtke2021-stoc-approx-frontier-chance-constr-nonlinear-program}
proposed an approach of constructing the efficient frontier (i.e., solving for optimal objectives 
as the risk value continuously changes) of nonlinear CCPs instead 
of solving the original chance-constrained problem with a pre-specified risk value.
In their formulation, the original chance-constraint function is transformed to be the objective, 
and hence a projected stochastic sub-gradient algorithm~\cite{davis2018-stoc-subgrad-methd-convg-weakly-conv-func} 
can be applied to solve the reformulated problem with smoothing. 
Note that the chance-constraint function has a universal range $[0, 1]$, and hence 
it can be flat in a certain domain, which can slow down the progress of
a gradient-descent method. Motivated by this observation, 
Pe{\~n}a-Ordieres et al.~\cite{wachter2020-chance-constr-prob-smooth-sample-nonlinear-approx}
suggested recasting the chance constraint as a quantile constraint because the later
could have high-magnitude gradients. In their work,
smoothing is applied to the quantile function to achieve numerical stability. 
A CCP can sometimes be equivalently formulated as a 
quantile function optimization problem, and for this case, 
Hu et al.~\cite{hu2022-stoc-approx-sim-quantile-opt} 
proposed a recursive algorithm developed upon the gradient-based
maximum likelihood estimation method~\cite{lam2020-mle-stoch-model} to estimate the gradient
of a differentiable quantile function with respect to the parameter and used it
as an ingredient in a gradient-descent algorithm for minimizing the parameterized quantile function.  
As a special case, Tong et al.~\cite{tong2022-opt-rare-chance-constr} developed 
conservative formulations for NLPs with rare chance constraints induced by 
Gaussian random parameters using tools from large deviation theory.

The algorithm developed in this paper also benefits from the augmented Lagrangian
methods developed for deterministic nonlinear optimization with equality and inequality
constraints \cite{birgin-practical-aug-lag,birgin2012-bd-penalty-param-aug-lag-ineq-constr,birgin2005-num-compare-aug-lag-alg}.
In particular, we established a probabilistic augmented Lagrangian method to handle
quantile constraint(s) that are only accessible based on samples.

\subsection{Contributions} Inspired
by~\cite{wachter2020-chance-constr-prob-smooth-sample-nonlinear-approx}, our approach 
reformulates the chance constraint into a quantile constraint. We develop 
a derivative-free approach to handle the evaluation of the quantile-function gradients.
Specifically, we use a sample-approximation method to estimate the value of the quantile function
based on a concentration theory of the empirical process. We also use a finite-difference
approach to estimate the gradients of the quantile function, 
which are used to build local approximation models.
Local models are used in an augmented
Lagrangian method (ALM) to solve the NLP with quantile constraints and other 
deterministic nonlinear constraints, while the inner optimization problems (with given values
of penalty parameters) are solved using a trust-region method. We have developed a 
convergence theory when the quantile function is differentiable.

\section{Quantile constraint reformulation}\label{sec:quant-reform}
Let $\Xi_x \defined c_1(x,\xi)$ be a random variable parameterized by $x$
and the random vector $\xi$.
As noted in~\cite{wachter2020-chance-constr-prob-smooth-sample-nonlinear-approx},
the chance constraint $\P[c_1(x,\xi)\le0]\ge 1-\alpha$ is equivalent to 
\beq
Q^{1-\alpha}(x)\le 0,
\eeq
where $Q^{1-\alpha}(x)$ is the $1-\alpha$ quantile of $\Xi_x$. 
If the $1-\alpha$ quantile of $\Xi_x$ is not unique,
we set $Q^{1-\alpha}(x) \defined \inf_{q\in\mc{Q}^{1-\alpha}(x)} q$, 
where $\mc{Q}^{1-\alpha}(x)$ is the set of all $1-\alpha$ quantiles of $\Xi_x$.
With the reformulation of the chance constraint, 
\eqref{opt:CCP} can be reformulated as 
the quantile-constrained problem
\beq\label{opt:QCP}
\underset{x\in\R^n}{\minimize}\;f(x) \;\; \textrm{s.t.: } \Qx\le 0,\quad c_2(x)\le 0. \tag{QCP}
\eeq
We assume that the gradients of the functions
$f(\cdot)$ and $c_2(\cdot)$ are accessible at any $x\in\mb{R}^n$,
and i.i.d.~samples of $\xi$ can be drawn as needed.
Note that the essential difficulty of \eqref{opt:QCP}
is that the derivative of the quantile function $Q^{1-\alpha}(\cdot)$ 
is not available in general although $\nabla c_1$ is computable.  
Furthermore, a sampling method is needed in order to obtain zeroth-order information 
about (i.e., the value of) $Q^{1-\alpha}(\cdot)$.
This paper develops an approach to find a stationary point
of \eqref{opt:QCP} using sample-based estimators of $Q^{1-\alpha}(\cdot)$ 
and $\nabla Q^{1-\alpha}(\cdot)$. 
These approximations are incorporated in an augmented Lagrangian method 
to solve the constrained problem \eqref{opt:QCP}. 

We first focus on the case where $\Qx$ is continuously differentiable 
within a bounded domain (\Cref{ass:c1-diff,ass:Q-diff}).
We also assume common
regularity conditions (\Cref{ass:f-c2-diff}) hold for the functions $f$, $c_1$ and $c_2$:
\begin{assumption}\label{ass:f-c2-diff}
The objective $f$ and the constraint functions $c_2$ 
are continuously differentiable at any $x\in\dom$,
and their gradients are Lipschitz continuous.
The corresponding Lipschitz constants are denoted as $L_f$
and $L_{c_2}$.
\end{assumption}
\begin{assumption}\label{ass:c1-diff}
For any $x\in\dom$, the random variable $c_1(x,\xi)$ (or $\Xi_x$) has a continuously
differentiable probability density function w.r.t. the Lebesgue measure.
The inverse function $F^{-1}_{\Xi_x}$ exists and it is nonzero in a neighborhood
of $F^{-1}_{\Xi_x}(1-\alpha)$, where $F_{\Xi_x}$ is the cumulative distribution function
of $\Xi_x$ for any $x\in\dom$.  
\end{assumption}
\begin{assumption}\label{ass:Q-diff}
The quantile function $Q^{1-\alpha}(\cdot)$ is continuously differentiable at any $x\in\mb{R}^n$,
and $\nabla Q^{1-\alpha}(\cdot)$ is Lipschitz continuous with the Lipschitz constant $L_Q$.
\end{assumption} 
\begin{assumption}\label{ass:F_twice_diff}
Assume that for every $x\in\ol{\dom}$, 
the cumulative density function $F_{\Xi_x}(\cdot)$ is twice differentiable in $(a_x, b_x)$,
where $a_x \defined \sup\{t: F_{\Xi_x}(t)=0\}$ and $b_x \defined \inf\{t: F_{\Xi_x}(t)=1\}$.
\end{assumption}
Note that verifying the differentiability of a quantile function with respect
to the decision variable can be difficult in practice. Therefore,
\Cref{ass:Q-diff} requires some justification: Sufficient conditions for
\Cref{ass:Q-diff} to hold are discussed in~\cite{hong2009-est-quant-sens}. 
To ensure this paper is self-contained, these conditions are stated with the notation of this paper in 
\Cref{ass:DCC}. \Cref{ass:Q-diff} is
implied by \Cref{ass:DCC}, but the later may appear to
be easier to verify. 

\begin{assumption}\label{ass:DCC}
  The disjoint chance constraint satisfies the following:
\begin{enumerate}
	\item The sample-wise gradient $\nabla_x c_1(x, \xi)$ exists w.p.1 for any $x\in \dom$, 
	and there exists a function $k(\xi)$ with $\E[k(\xi)]<\infty$ such that 
  $|c_1(x_1,\xi)-c_1(x_2,\xi)|\le k(\xi)\norm{x_1-x_2}$ for all $x_1,x_2\in \dom$.\label{subass:1}
	\item For any $x\in \dom$, the random variable $c_1(x,\xi)$ has a continuous 
	density function $\rho(t; x)$ and $\nabla_x F(t; x)$ exists and is continuous 
	with respect to both $x$ and $t$, where $F(t;x)$ is the cumulative distribution
  function of $c_1(x,\xi)$.\label{subass:2}
	\item For any $x\in \dom$, the function $g(t;x) \defined \E[\nabla_x c_1(x,\xi) | c_1(x,\xi)=t]$ 
    is continuous with respect to $t$.\label{subass:3}
\end{enumerate}
\end{assumption}
We will denote the three items in \Cref{ass:DCC} as \Cref{ass:DCC}($i$) for $i=1,2,3$, 
respectively from hereafter. 
We note that the algorithms in this paper might also be applied to some 
CCPs for which the above assumptions are not necessarily satisfied. 
These assumptions are for convergence analysis.

\subsection{Differentiability of the quantile function of a joint chance constraint}
\label{sec:diff-joint-cc}
For the case of having a joint chance constraint, it requires additional procedures
to reformulate the chance constraint into a quantile constraint. 
In this case the definition of the quantile function should be adjusted. 
Specifically, consider the following joint chance constraint:
\beq\label{eqn:joint-cc}
\P[\psi_i(x,\xi)\le 0,\;i=1,\ldots,l]\ge 1-\alpha.
\eeq
We define the function $\psi$ as 
$\psi(x,\xi) \defined \max\{\psi_i(x,\xi),\;i=1,\ldots,l\}$, which is a random variable
parameterized by $x$. Ideally, the joint chance constraint \eqref{eqn:joint-cc}
can be reformulated as a quantile constraint $Q^{1-\alpha}_\psi(x)\le 0$,
where $Q^{1-\alpha}_\psi(x)$ denotes the $1-\alpha$ quantile of the random
variable $\psi(x,\xi)$. The following proposition provides conditions under
which the quantile function $Q^{1-\alpha}_\psi(\cdot)$ is differentiable and
hence our convergence analysis and algorithms can be applied
to joint chance constraints. 
\begin{proposition}\label{prop:joint-Q-diff}
If the following conditions hold
\begin{enumerate}
  \item Each $\psi_i$ satisfies \subassref{DCC}{1},\label{jcc1}
	\item For any $x\in \dom$, the random vector $[\psi_1(x,\xi),\ldots,\psi_l(x,\xi)]$ has a continuous joint 
		 density function $\rho(t_1,\ldots,t_l;x)$ and $\nabla_x F(t_1,\ldots,t_l;x)$ exists and is continuous
		 with respect to $x$ and all $t_i$, where $F(t_1,\ldots,t_l;x)$ is the cumulative distribution function
     of $[\psi_1(x,\xi),\ldots,\psi_l(x,\xi)]$, and\label{jcc2}
	\item For any $x\in \dom$, the function 
	$g_i(t_1,\ldots t_l; x) \defined \E[\nabla_x\psi_i(x,\xi)|\psi_j(x,\xi)=t_j, \forall j]$
  is continuous with respect to $t$ for every $i\in[l]$, where $[l]:=\{1,\ldots,l\}$. \label{jcc3}
\end{enumerate}
then the quantile function $Q^{1-\alpha}_\psi(x)$ is differentiable for all $x\in \dom$.
\end{proposition}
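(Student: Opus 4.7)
The plan is to reduce the joint case to the disjoint case already treated, by verifying that the scalar-valued random function $\psi(x,\xi):=\max_{i\in[l]}\psi_i(x,\xi)$ satisfies the three items of \Cref{ass:DCC}; \Cref{ass:Q-diff} for $Q^{1-\alpha}_\psi$ then follows from the sufficient conditions of~\cite{hong2009-est-quant-sens}.

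For \subassref{DCC}{1}, continuity of the joint density from condition~(\ref{jcc2}) implies that for each $x\in\dom$ the coincidence events $\{\psi_i(x,\xi)=\psi_j(x,\xi)\}$, $i\neq j$, have probability zero, so w.p.\ 1 the maximizing index $i^\star(x,\xi)$ is unique and locally constant in $x$, giving $\nabla_x\psi(x,\xi)=\nabla_x\psi_{i^\star}(x,\xi)$. Lipschitz continuity follows from
\[
|\psi(x,\xi)-\psi(y,\xi)|\le\max_{i\in[l]}|\psi_i(x,\xi)-\psi_i(y,\xi)|\le\Bigl(\sum_{i=1}^l k_i(\xi)\Bigr)\|x-y\|,
\]
with $\E[\sum_i k_i(\xi)]<\infty$ by condition~(\ref{jcc1}).

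For \subassref{DCC}{2}, I would differentiate $F_\psi(t;x)=F(t,\ldots,t;x)$ along the diagonal to obtain the density
\[
\rho_\psi(t;x)=\sum_{i=1}^l\int_{(-\infty,t)^{l-1}}\rho(t_1,\ldots,t_{i-1},t,t_{i+1},\ldots,t_l;x)\prod_{j\neq i}dt_j,
\]
which is continuous in $(t,x)$ by dominated convergence together with condition~(\ref{jcc2}); the gradient $\nabla_x F_\psi(t;x)=\nabla_x F(t,\ldots,t;x)$ inherits continuity from the same condition.

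For \subassref{DCC}{3}, using the a.s.\ identity $\nabla_x\psi=\nabla_x\psi_{i^\star}$ and the tower property, splitting the conditional expectation over $\{i^\star=i\}$ yields
\[
\E[\nabla_x\psi(x,\xi)\mid\psi=t]\,\rho_\psi(t;x)=\sum_{i=1}^l\int_{(-\infty,t)^{l-1}}g_i(t_1,\ldots,t_{i-1},t,t_{i+1},\ldots,t_l;x)\,\rho(t_1,\ldots,t_{i-1},t,t_{i+1},\ldots,t_l;x)\prod_{j\neq i}dt_j.
\]
Continuity in $t$ of the right-hand side then follows from conditions~(\ref{jcc2})--(\ref{jcc3}) and dominated convergence, and dividing by the continuous, strictly positive $\rho_\psi(t;x)$ (positive in a neighborhood of the $(1-\alpha)$-quantile, mirroring the scalar condition in \Cref{ass:c1-diff}) preserves continuity.

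The hard part is item three: the max couples all coordinates of the joint distribution, and one has to carefully disaggregate the conditional expectation over which index attains the maximum so that the per-component conditional-expectation functions $g_i$ from~(\ref{jcc3}) become available and so that ties contribute nothing by~(\ref{jcc2}). The remaining verifications are essentially bookkeeping based on the standard representation of the distribution of a maximum of continuously distributed random variables.
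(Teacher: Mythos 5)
Your proposal follows essentially the same route as the paper: reduce to the disjoint case by verifying the three items of \Cref{ass:DCC} for $\psi(x,\xi)=\max_i\psi_i(x,\xi)$, using zero-probability ties for item~1, the diagonal derivative of $F(t,\ldots,t;x)$ for item~2, and a disaggregation of the conditional expectation over the maximizing index for item~3. Your rendering of item~3 is in fact slightly more careful than the paper's (you weight by the joint density and normalize by $\rho_\psi(t;x)$, where the paper's displayed integral omits both), but the argument is the same.
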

\begin{proof}[Proof of \Cref{prop:joint-Q-diff}]
It suffices to show that $\psi$ satisfies the conditions in \Cref{ass:DCC}. 
First, we note that condition~\ref{jcc2} implies $\P[\psi_i(x,\xi)=\psi_j(x,\xi)]=0$ for all $i<j$.
Notice that $\nabla_x\psi(x,\xi)=\nabla_x \psi_i(x,\xi)$ for $\xi$ in
the region $\{\xi:\;\psi_i(x,\xi)>\psi_j(x,\xi)\;\forall j\in[l]\setminus\{i\}\}$
for all $i\in[l]$, and since $\P[\psi_i(x,\xi)=\psi_j(x,\xi)]=0$ for all $i<j$ 
(due to the continuity of the probability density function),
the gradient $\nabla_x\psi(x,\xi)$ exists w.p.1. Furthermore,
$|\psi(x_1,\xi)-\psi(x_2,\xi)|\le\max_i\{k_i(\xi)\}\norm{x_1-x_2}$,
where $k_i(\xi)$ is the function in \Cref{ass:DCC}(1) for $\psi_i$
(using the condition 1 of \Cref{prop:joint-Q-diff}).
This verifies that \Cref{ass:DCC}(1) is satisfied by $\psi$.
To verify that $\psi$ satisfies \Cref{ass:DCC}(2), we see that
$\psi(x,\xi)$ has a continuous density function if the cumulative distribution
function $F_\psi(t;x)$ of it is differentiable with respect to $t$. 
The $F_\psi(t;x)$ can be computed as
\beq
\ba
F_\psi(t;x)&=\P[\psi(x,\xi)\le t] 
=\P[\psi_1(x,\xi)\le t,\ldots, \psi_l(x,\xi)\le t] =F(t,\ldots,t; x).
\ea
\eeq
It follows that
\beq
\ba
&\frac{dF_\psi(t;x)}{dt}=\sum^l_{i=1}\partial_i F(t,\ldots,t;x) \\
&=\sum^l_{i=1}\int^t_{-\infty}\ldots\int^t_{-\infty} \rho(t_1,\ldots,t_{i-1},t,t_{i+1},\ldots,t_l; x)
dt_1\ldots dt_{i-1}dt_{i+1}\ldots dt_l,
\ea
\eeq
which is clearly continuous in $t$. 
The gradient $\nabla_x F_\psi(t;x)=\nabla_x F(t,\ldots,t;x)$
which is continuous in $x$ and $t$ by condition~\ref{jcc2}. 
To verify that $\psi$ satisfies Assumption 4.3, we notice that
for any $x\in \dom$, the function $g(t;x)$ can be computed as
\beq\label{eqn:g_psi}
\ba
g(t;x)&=\E[\nabla_x \psi(x,\xi) | \psi(x,\xi)=t] \\
&=\sum^l_{i=1}\E[\nabla_x\psi_i(x,\xi)|\psi_i(x,\xi)=t,\;\psi_i(x,\xi)\ge\psi_j(x,\xi)\; \forall j\in[l]\setminus\{i\}].
\ea
\eeq
Note that the second inequality of \eqref{eqn:g_psi} holds since 
we have $\P[\psi_i(x,\xi)=\psi_j(x,\xi)]=0$ for all $i\neq j$. For $i=1$,
the first function in the summation can be represented as the following integral
\beq\label{eqn:app-cond-exp}
\ba
&\E[\nabla_x\psi_1(x,\xi)|\psi_1(x,\xi)=t,\;\psi_1(x,\xi)\ge\psi_j(x,\xi)\; \forall j\ge 2] \\
&=\int^t_{-\infty}\ldots\int^t_{-\infty} g_1(t,t_2,\ldots t_l; x) dt_2\ldots dt_l,
\ea
\eeq
which exists by condition~\ref{jcc3}. Note that in \eqref{eqn:app-cond-exp} 
the conditional expectation on $\psi_1(x,\xi)=t,\;\psi_1(x,\xi)\ge\psi_j(x,\xi)\; \forall j\ge 2$
has been achieved by setting the first argument in $g_1$ as $t$ for achieving $\psi_1(x,\xi)=t$ 
and setting the range of integral to be from $-\infty$ to $t$ for all the other arguments. 
Now we have verified that all the three conditions in \Cref{ass:DCC} are satisfied by the 
function $\psi(x,\xi)$, and hence the quantile function $Q^{1-\alpha}_\psi(x)$ is differentiable for all $x\in \dom$.
\qed
\end{proof}

\section{Sample-Based Quantile Approximation}
\label{sec:quant-grad-approx}
Our approach for solving \eqref{opt:QCP} uses
estimates $\Qx$ and $\nabla\Qx$ at a given point $x$ with samples of $\xi$,
and a finite-difference approximation is used for the estimation of $\nabla\Qx$.
Consider the problem of estimating the $1-\alpha$ quantile of a random variable $X$
using $N$ i.i.d.~samples $\{X_i\}^N_{i=1}$ drawn from the probability distribution of $X$.
Denote the quantile as $X^{(1-\alpha)}$. An asymptotic unbiased estimator of $X^{(1-\alpha)}$
is the $1-\alpha$ quantile $\wh{X}\up{1-\alpha}$ of the sequence $\{X_i\}^N_{i=1}$, 
which is computed by 
\begin{enumerate}
	\item Sorting $\{X_i\}^N_{i=1}$ in ascending order, 
		and letting $\{X^{\prime}_i\}^N_{i=1}$ be the sorted sequence;
	\item Letting $\wh{X}\up{1-\alpha}$ be the $\ceil{(1-\alpha)N}$th element in the sorted sequence.
\end{enumerate} 
The following theorem connects the quantile process with a Brownian bridge.
\begin{theorem}[{\cite[Theorem 6]{gsorgo1978-strong-approx-quant-process}}]\label{thm:quant-proc}
Let $\{X_i\}^N_{i=1}$ be i.i.d.~random variables with a cumulative distribution
function $F_X$ that is 
twice differentiable on $(a,b)$ where $a \defined \sup\{x: F_X(t)=0\}$, $b \defined \inf\{x: F_X(t)=1\}$
($a$ and $b$ can be $-\infty$ and $+\infty$ respectively in general)
and the density function $\rho_X \defined F^{\prime}_X$ is continuously differentiable and greater than zero on $(a,b)$. 
Let $N$ be the sample size and 
$\hat{q}^{1-\alpha}_N \defined N^{\frac{1}{2}}(\wh{Q}^{1-\alpha}_N-Q^{1-\alpha})$,
where $\wh{Q}^{1-\alpha}_N$ is the empirical $(1-\alpha)$-quantile 
of the sample set $\{X_i\}^N_{i=1}$ determined by the above two steps
and $Q^{1-\alpha} \defined F^{-1}_X(1-\alpha)$ is the unique $(1-\alpha)$-quantile of the distribution $F_X$. 
The following concentration inequality holds for every $z>0$:
\beq
\P\left(\big|\rho_X(Q^{1-\alpha})\hat{q}^{1-\alpha}_N-\sqrt{\alpha(1-\alpha)}W\big|>(C\log N+z)\log N\right)< e^{-d z},
\eeq
where $W$ is a standard normal random variable, $C$ and $d$ are constants that depend on $|\rho_X(Q^{1-\alpha})|$ and $|\rho^\prime_X(Q^{1-\alpha})|$, where $\rho^\prime_X$ is the derivative of the density function $\rho_X$.
\end{theorem}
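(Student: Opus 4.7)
The plan is to follow the classical two-step strategy underlying the Cs\"org\H{o}--R\'ev\'esz theory of quantile processes: first convert the quantile process into the empirical process through a pointwise Bahadur--Kiefer representation, then apply the Koml\'os--Major--Tusn\'ady (KMT) strong approximation of the empirical process by a Brownian bridge and evaluate at the fixed level $1-\alpha$.

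First I would establish the pointwise Bahadur representation. Under the twice-differentiability of $F_X$ on $(a,b)$ and the positivity and smoothness of $\rho_X = F_X^\prime$ in a neighborhood of $Q^{1-\alpha}$, a Taylor expansion of $F_X$ around $Q^{1-\alpha}$ combined with the monotonicity of $F_N$ yields
$$\wh{Q}^{1-\alpha}_N - Q^{1-\alpha} = \frac{(1-\alpha) - F_N(Q^{1-\alpha})}{\rho_X(Q^{1-\alpha})} + R_N,$$
where $F_N$ is the empirical c.d.f. and the remainder $R_N$ admits an exponential tail bound of the form $\Plr{\sqrt{N}\,\abs{R_N} > N^{-1/4}(C_0\log N + z)^{3/4}} \le K_0 e^{-\lambda_0 z}$ obtained by applying Bernstein-type inequalities to the centred empirical process on shrinking neighbourhoods of $Q^{1-\alpha}$. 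Multiplying both sides by $\rho_X(Q^{1-\alpha})\sqrt{N}$ converts the left-hand side into $\rho_X(Q^{1-\alpha})\hat{q}^{1-\alpha}_N$ and the first term on the right into $-\sqrt{N}(F_N(Q^{1-\alpha}) - (1-\alpha))$, i.e., the negated empirical process evaluated at $Q^{1-\alpha}$.

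Next I would invoke the KMT approximation: on a suitable probability space there exist Brownian bridges $B_N$ such that for every $z>0$,
$$\Plr{\sup_t \abs{\sqrt{N}(F_N(t) - F_X(t)) - B_N(F_X(t))} > C_1\log N + z} \le K_1 e^{-\lambda_1 z}.$$
Specializing to $t = Q^{1-\alpha}$ and using $F_X(Q^{1-\alpha}) = 1-\alpha$ gives $\sqrt{N}(F_N(Q^{1-\alpha}) - (1-\alpha)) = B_N(1-\alpha) + E_N$, where $E_N$ obeys the same exponential tail. Since $B_N(1-\alpha)$ is normal with mean zero and variance $\alpha(1-\alpha)$, we may write $B_N(1-\alpha) = -\sqrt{\alpha(1-\alpha)}\,W$ for a standard normal $W$. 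Combining with the Bahadur step and absorbing $\rho_X(Q^{1-\alpha})\sqrt{N}\,R_N$ together with $E_N$ into a single remainder then yields the claimed bound, provided the combined remainder is dominated by $(C\log N + z)\log N$ with exponential rate $e^{-dz}$.

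The main obstacle will be tracking how the constants $C$ and $d$ depend on $\abs{\rho_X(Q^{1-\alpha})}$ and $\abs{\rho_X^\prime(Q^{1-\alpha})}$. The density value enters through the denominator of the Bahadur linearization, so a small value of $\rho_X(Q^{1-\alpha})$ inflates the contribution of the empirical-process fluctuation; the derivative $\rho_X^\prime(Q^{1-\alpha})$ enters through the quadratic Taylor correction when controlling $F_N(\wh{Q}^{1-\alpha}_N) - F_N(Q^{1-\alpha})$ on the random interval between $Q^{1-\alpha}$ and $\wh{Q}^{1-\alpha}_N$, whose length is of order $N^{-1/2}\log N$ with high probability. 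Carrying these dependencies through the two-step composition without loss in the tail exponent is the delicate bookkeeping that makes the Cs\"org\H{o}--R\'ev\'esz theorem a substantive result, and is why it is cited here rather than reproduced.
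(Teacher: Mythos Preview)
The paper does not prove this theorem at all: it is quoted verbatim from \cite{gsorgo1978-strong-approx-quant-process} and used as a black box, so there is no in-paper proof to compare against. Your sketch via Bahadur--Kiefer linearization followed by KMT approximation is indeed the classical route to such results, and you correctly recognize in your final paragraph that this is exactly why the paper cites rather than reproduces it.
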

We can now provide a probabilistic bound on the quantile estimation's gradient.
\begin{theorem}\label{thm:quant-grad-approx}
Suppose \Cref{ass:c1-diff,ass:Q-diff,ass:F_twice_diff} hold. Let $\rho(q;x)$ and $\rho^\prime(q;x)$ 
be the density function and the derivative of the density function of $\Xi_x$ evaluated at $q$.   
Let $\{\Xi_i\}^N_{i=1}$ be i.i.d.~samples of $\Xi_x$ and
$\Q{N}{x}$ be the empirical $1-\alpha$ quantile of the samples. 
Consider the sample-based quantile gradient estimator $\wh{D}_N(x)$ defined by
\beq\label{eqn:whG}
\wh{D}_N(x) \defined \sum^n_{k=1}\frac{\Q{N}{x+\beta\hat{e}_k}-\Q{N}{x-\beta\hat{e}_k}}{2\beta}\hat{e}_k,
\eeq
where $\beta>0$ is the finite-difference parameters, and $\hat{e}_k$ is the basis vector in 
$\mR^n$ with a one in entry $k$ and zeros in all the other entries.
If $\beta\le \frac{\delta}{nL_Q}$ and the sample size $N$ satisfies the following condition:
\beq\label{eqn:N>lb1}
N\ge O\left( \frac{C^2_\dom\alpha(1-\alpha)n^2(\log\frac{n}{\gamma})^3}{d^2_\dom\beta^2\delta^2} \right),
\eeq
the following probabilistic bound on the gradient of the quantile estimation holds
for every $\delta,\gamma>0$ and $x\in\ol{\dom}$:
\beq\label{eqn:N-cond1}
\P\parenth{\norm{\nabla\Qx-\wh{D}_N(x)}\ge\delta} \le \gamma,
\eeq 
where $C_{\dom}$ and $d_\dom$ are positive constants that depend on the bounds on
$\rho(\Qv{x};x)$ and $\rho^\prime(\Qv{x};x)$ for all $x\in\ol{\dom}$.
\end{theorem}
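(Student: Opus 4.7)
My plan is to decompose the per-coordinate error of the centered finite-difference estimator into a deterministic bias plus a stochastic empirical-quantile noise, bound each piece separately, and then conclude via a union bound over the $2n$ quantile evaluations. Writing
\[
\partial_k \Qx - \frac{\Q{N}{x+\beta\hat{e}_k}-\Q{N}{x-\beta\hat{e}_k}}{2\beta}
\;=\; b_k \;+\; \frac{E_k^{-} - E_k^{+}}{2\beta},
\]
with $b_k := \partial_k\Qx - \frac{1}{2\beta}\bigl(\Qv{x+\beta\hat{e}_k}-\Qv{x-\beta\hat{e}_k}\bigr)$ and $E_k^{\pm}:=\Q{N}{x\pm\beta\hat{e}_k}-\Qv{x\pm\beta\hat{e}_k}$, and using $\norm{\nabla\Qx-\wh{D}_N(x)}\le\sum_k\abs{\partial_k\Qx - \wh{D}_{N,k}(x)}$, it suffices to budget $\delta/(2n)$ for the bias in each coordinate and $\delta/(2n)$ for the noise; the per-coordinate tolerance of order $1/n$ is precisely what produces the $n^2$ factor in~\eqref{eqn:N>lb1}.

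For the bias, \Cref{ass:Q-diff} supplies an $L_Q$-Lipschitz gradient, so a one-dimensional Taylor expansion along $\hat{e}_k$ with remainder controlled by the Lipschitz constant of $\nabla Q^{1-\alpha}$ gives $\abs{b_k}\le L_Q\beta/2$; summing and using $\beta\le\delta/(nL_Q)$ yields $\sum_k\abs{b_k}\le\delta/2$. For the noise I would invoke \Cref{thm:quant-proc} separately at each of the $2n$ points $x\pm\beta\hat{e}_k$. \Cref{ass:c1-diff,ass:F_twice_diff} supply the twice-differentiability of $F_{\Xi_x}$ and the positivity of its density near the $(1-\alpha)$-quantile required by the theorem, while continuity of these quantities combined with compactness of $\ol\dom$ yields uniform constants $C_\dom$, $d_\dom$ and a uniform lower bound $\rho_\dom>0$ on $\rho(\Qv{\cdot};\cdot)$. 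Combining the tail $e^{-d_\dom z}$ from the theorem with the Gaussian tail $\P(\abs{W}>t)\le 2e^{-t^2/2}$ for the Brownian-bridge term, and choosing $z=\Theta(\log(n/\gamma))$ and $t=\Theta(\sqrt{\log(n/\gamma)})$ so that each of the $2n$ individual failures has probability at most $\gamma/(4n)$, a union bound then gives that with probability at least $1-\gamma$ every $E_k^{\pm}$ satisfies
\[
\abs{E_k^{\pm}} \;\le\; \frac{t\sqrt{\alpha(1-\alpha)} + (C_\dom\log N + z)\log N}{\rho_\dom\sqrt{N}}.
\]
Forcing the right-hand side to be at most $\beta\delta/(4n)$ produces an implicit sample-size inequality that reduces to~\eqref{eqn:N>lb1}.

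The main obstacle will be the implicit appearance of $\log N$ on the right-hand side of \Cref{thm:quant-proc}, so that the sample-size condition is not immediately in closed form. Inverting it requires a short bootstrap: a crude first estimate $\log N = O(\log(n/(\gamma\beta\delta)))$, substituted back, absorbs the $(\log N)^2$ factor into $O((\log(n/\gamma))^2)$, which together with the extra $\log(n/\gamma)$ from the choice of $z$ yields the $(\log(n/\gamma))^3$ exponent in~\eqref{eqn:N>lb1}. A secondary subtlety is the justification of the uniform constants $C_\dom$, $d_\dom$, and $\rho_\dom$: this reduces to verifying $\inf_{x\in\ol\dom}\rho(\Qv{x};x)>0$ and $\sup_{x\in\ol\dom}\abs{\rho^\prime(\Qv{x};x)}<\infty$, both of which follow from the continuity of the density and its derivative in $(q,x)$, continuity of $\Qv{\cdot}$, and compactness of $\ol\dom$.
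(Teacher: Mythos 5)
Your proposal is correct and follows essentially the same route as the paper's proof: the same bias-plus-noise decomposition with the centered-difference bias bounded by $\tfrac{1}{2}nL_Q\beta\le\delta/2$ via \Cref{ass:Q-diff}, the same invocation of \Cref{thm:quant-proc} at the $2n$ points $x\pm\beta\hat{e}_k$ with a union bound allocating failure probability $\gamma/(4n)$ per point, and the same resulting sample-size condition. The only differences are cosmetic (you budget the noise as a single $\beta\delta/(4n)$ threshold where the paper splits it into three $\beta\delta/(6n)$ pieces, and you make explicit the $\log N$ inversion and the compactness argument for the uniform constants, which the paper leaves implicit).
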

\begin{proof}
The estimation error of the quantile gradient can be bounded as follows
\[
\ba
&\norm{\nabla\Qx-\wh{D}_N(x)} 
\le\sum^n_{k=1}\abs{\partial_k\Qx-\frac{\Q{N}{x+\beta\hat{e}_k}-\Q{N}{x-\beta\hat{e}_k}}{2\beta}} \\
&\le\sum^n_{k=1}\abs{\partial_k\Qx-\frac{Q^{1-\alpha}(x+\beta\hat{e}_k)-Q^{1-\alpha}(x-\beta\hat{e}_k)}{2\beta}} \\
&\quad+\sum^n_{k=1}\abs{\frac{Q^{1-\alpha}(x+\beta\hat{e}_k)-Q^{1-\alpha}(x-\beta\hat{e}_k)}{2\beta}
-\frac{\Q{N}{x+\beta\hat{e}_k}-\Q{N}{x-\beta\hat{e}_k}}{2\beta}} \\
&\le \frac{1}{2}nL_Q\beta + \frac{1}{2\beta}\sum^n_{k=1}
\abs{Q^{1-\alpha}(x-\beta\hat{e}_k)-\Q{N}{x-\beta\hat{e}_k}} \\
&\quad +\frac{1}{2\beta}\sum^n_{k=1}\abs{Q^{1-\alpha}(x+\beta\hat{e}_k)-\Q{N}{x+\beta\hat{e}_k}},
\ea
\]
where $\partial_k$ denotes the derivative with respect to the $k$th component of $x$,
and the first term $\frac{1}{2}nL_Q\beta$ in the last inequality is due to \Cref{ass:Q-diff} on the $L_Q$-smoothness
of the $Q^{1-\alpha}$ function.
We choose $\beta \defined \frac{\delta}{nL_Q}$, which yields the following inequalities using 
the union bound:
\beq\label{eqn:P(Q-G)}
\ba
\P\parenth{\norm{\nabla\Qx-\wh{D}_N(x)}\ge\delta} 
&\le\P\parenth{\frac{1}{2\beta}\sum^n_{k=1}\parenth{T_{1k} + T_{2k}}\ge\frac{\delta}{2}} \\
&\le\sum^n_{k=1}\P\parenth{T_{1k}\ge\frac{\beta\delta}{2n}}
+\sum^n_{k=1}\P\parenth{T_{2k}\ge\frac{\beta\delta}{2n}},
\ea
\eeq
where $T_{1k}$ and $T_{2k}$ are defined as 
\begin{displaymath}
\begin{aligned}
& T_{1k}  \defined  \abs{Q^{1-\alpha}(x-\beta\hat{e}_k)-\Q{N}{x-\beta\hat{e}_k}}, \\
& T_{2k}  \defined  \abs{Q^{1-\alpha}(x+\beta\hat{e}_k)-\Q{N}{x+\beta\hat{e}_k}}.
\end{aligned}
\end{displaymath}
Applying \Cref{thm:quant-proc}, for a fixed $x_0$ and any fixed sample size $N$ 
the following inequality holds with probability at least $1-e^{-d_{x_0} y}$:
\beq
\Big|z_{x_0}N^{\frac{1}{2}}(\Q{N}{x_0}-\Qv{x_0})-\sqrt{\alpha(1-\alpha)}W_{x_0}\Big|\le(C_{x_0}\log N+y)\log N,
\eeq
where $z_{x_0} \defined \rho(\Qv{x_0};x_0)$ is the density function of $\Xi(x_0)$ evaluated at the 
quantile $\Qv{x_0}$. Note that the constant $d_{x_0}$, $C_{x_0}$ and the standard normal
variable $W_{x_0}$ are all $x_0$ dependent, and the parameter $y$ will be chosen later. 
It leads to the following inequalities 
with probability at least $1-e^{-d_{x_0} y}$:
\beq\label{eqn:Q-Q_bd1}
\begin{aligned}
&\Q{N}{x_0}-\Qv{x_0} \le \sqrt{\alpha(1-\alpha)}z^{-1}_{x_0}N^{-1/2}W_{x_0} + z^{-1}_{x_0}N^{-1/2}(C_{x_0}\log N+y)\log N, \\
&\Q{N}{x_0}-\Qv{x_0} \ge \sqrt{\alpha(1-\alpha)}z^{-1}_{x_0}N^{-1/2}W_{x_0} - z^{-1}_{x_0}N^{-1/2}(C_{x_0}\log N+y)\log N
\end{aligned}
\eeq
We now impose the following conditions to bound each term in \eqref{eqn:Q-Q_bd1}:
\beq\label{eqn:Q-Q_term_bd}
\begin{aligned}
&\P\left(\sqrt{\alpha(1-\alpha)}z^{-1}_{x_0}N^{-1/2}W_{x_0}\ge\frac{\beta\delta}{6n}\right)<\gamma^\prime, \\
&C_{x_0}z^{-1}_{x_0}N^{-1/2}(\log N)^2<\frac{\beta\delta}{6n}, \\
&C_{x_0}yz^{-1}_{x_0}N^{-1/2}\log N<\frac{\beta\delta}{6n},
\end{aligned}
\eeq
where $\gamma^\prime$ is a parameter which will be chosen later.
Using the tail bound of the Gaussian variable, the above three inequalities 
are guaranteed to hold if the sample size $N$ should satisfy
\beq\label{eqn:sp_N_cond1}
N\ge O\left(\frac{\alpha(1-\alpha)n^2C^2_{x_0}y^2\log\frac{1}{\gamma^\prime}}{\beta^2\delta^2z^2_{x_0}} \right).
\eeq
Combining \eqref{eqn:Q-Q_bd1} and \eqref{eqn:Q-Q_term_bd} concludes that 
\beq\label{eqn:P<e+gamma}
\P\parenth{\abs{Q^{1-\alpha}(x_0)-\Q{N}{x_0}}\ge\frac{\beta\delta}{2n}}<e^{-d_{x_0}y}+\gamma^\prime.
\eeq
Then we substitute $x_0\gets x\pm \beta\hat{e}_k$ for $k=1,\ldots,n$ and incorporate them into \eqref{eqn:P(Q-G)}.
We then obtain the following inequality by applying the union bound if \eqref{eqn:sp_N_cond1}
and \eqref{eqn:P<e+gamma} are satisfied 
with universal constants $C_{\dom}$ and $d_\dom$ that depend on the uniform bounds for
$\rho(\Qv{x};x)$ and $\rho^\prime(\Qv{x};x)$ over $x\in\ol{\dom}$:
\beq\label{eqn:|Q-D|_interm}
\P\parenth{\norm{\nabla\Qx-\wh{D}_N(x)}\ge\delta} \le 2n(e^{-d_\dom y}+\gamma^\prime).
\eeq
We set $y=\frac{1}{d_\dom}\log\frac{4n}{\gamma}$ and $\gamma^\prime=\frac{\gamma}{4n}$,
the right-hand side of \eqref{eqn:|Q-D|_interm} becomes $\gamma$, and the complexity condition becomes \eqref{eqn:N>lb1}.
\qed
\end{proof}
The following corollary is directly implied from the proof of \Cref{thm:quant-grad-approx}.
\begin{corollary}\label{cor:P(|Q-QN|)}
Suppose \Cref{ass:c1-diff,ass:Q-diff,ass:F_twice_diff} hold.
If $N\ge O\left( \frac{C^2_\dom\alpha(1-\alpha)(\log\frac{1}{\gamma})^3}{d^2_\dom\delta^2} \right)$,
the following inequality holds for all $x\in \dom$:
\beq\label{eqn:P(|Q-Q|)<gamma}
\P\parenth{\abs{\Qx-\Q{N}{x}}\ge\delta}\le\gamma.
\eeq
\end{corollary}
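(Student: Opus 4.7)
The plan is to extract the pointwise concentration inequality \eqref{eqn:P<e+gamma} from the proof of \Cref{thm:quant-grad-approx} and apply it directly at the single point $x$, with no finite-difference perturbation. The corollary is precisely the pointwise bound that sat inside the union bound over the $2n$ perturbed points $x\pm\beta\hat e_k$; since we no longer need that union bound, the $n$-dependence and the $\beta$-dependence both disappear from the sample complexity.

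Concretely, I would first invoke \Cref{thm:quant-proc} at $x$ with slack parameter $y>0$. This yields the decomposition
\[
\abs{\Q{N}{x}-\Qx}\le \sqrt{\alpha(1-\alpha)}\,z_x^{-1}N^{-1/2}\abs{W_x} + z_x^{-1}N^{-1/2}(C_x\log N+y)\log N
\]
with probability at least $1-e^{-d_x y}$, where $z_x \defined \rho(\Qx;x)$, and $C_x, d_x$ are the constants supplied by \Cref{thm:quant-proc}. Next, I would split the error budget $\delta$ into three equal parts and require each of the three contributions (the Gaussian fluctuation, the $C_x(\log N)^2$ term, and the $y\log N$ term) to be bounded by $\delta/3$. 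For the Gaussian piece I would use the standard sub-Gaussian tail $\P(\abs{W_x}\ge t)\le 2e^{-t^2/2}$ and set the excess probability to $\gamma/2$, so that $t = O(\sqrt{\log(1/\gamma)})$; this forces the first sample-size requirement $N\gtrsim \alpha(1-\alpha)\log(1/\gamma)/(z_x^2\delta^2)$. For the $y\log N$ term I would set $y = (1/d_x)\log(2/\gamma)$ so that $e^{-d_xy}\le \gamma/2$, producing an additional $\log^2(1/\gamma)$ factor in the sample complexity from $y^2$. The purely deterministic $C_x(\log N)^2$ term only introduces the usual $\log^2 N$ factor, which is absorbed by the standard trick of self-bounding.

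Finally, I would take the suprema of $1/z_x$, $C_x$, $1/d_x$ over $x\in\ol{\dom}$ (all finite by \Cref{ass:c1-diff,ass:F_twice_diff}) and rename them $d_\dom^{-1}, C_\dom$, so the constants become uniform over the domain. Combining the two failure probabilities by the union bound gives $\P(|\Qx-\Q{N}{x}|\ge\delta)\le\gamma$ as long as $N\gtrsim C_\dom^2\alpha(1-\alpha)(\log(1/\gamma))^3 / (d_\dom^2\delta^2)$, which is the claimed bound.

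No step is genuinely difficult, since the corollary is morally just \eqref{eqn:P<e+gamma} applied once rather than $2n$ times. The only real care is in the logarithmic bookkeeping: the $(\log(1/\gamma))^3$ scaling in the sample complexity comes from combining the $y^2$ factor (two powers of $\log(1/\gamma)$) with the one factor of $\log N$ left over after using $\log N$ to dominate the $C_x\log N$ piece and then self-bounding $N$ against its own logarithm. That is the only place where one must be attentive; everything else is a direct specialization of the argument already used for \Cref{thm:quant-grad-approx}.
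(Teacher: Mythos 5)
Your proposal is correct and follows essentially the same route as the paper, which gives no separate argument for \Cref{cor:P(|Q-QN|)} but simply observes that it is the single-point concentration bound \eqref{eqn:P<e+gamma} (with target accuracy $\delta$ in place of $\beta\delta/(2n)$, so the $n$- and $\beta$-dependence drop out) already established inside the proof of \Cref{thm:quant-grad-approx}, with the constants made uniform over $\ol{\dom}$ exactly as you describe. Your bookkeeping of the $(\log(1/\gamma))^3$ factor ($y^2$ from the slack parameter times one residual logarithm) matches the paper's derivation of \eqref{eqn:sp_N_cond1}.
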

Note that compared to \eqref{eqn:N>lb1} the sample complexity in the above corollary 
is independent of $n$ and $\beta$ since it is for the estimation of the quantile itself rather than
the gradient of the quantile function (which involves the step size $\beta$ and dimension $n$). 
Clearly, the sample complexity for \eqref{eqn:P(|Q-Q|)<gamma} to hold is dominated by 
\eqref{eqn:N>lb1}.

\section{Stochastic Merit Function Based on the Quantile Approximation}
\label{sec:aug-lag}
We now analyze an augmented Lagrangian method (ALM)
to solve \eqref{opt:QCP}.
To simplify the analysis, we unify the notation for constraint functions and reformulate \eqref{opt:QCP} as 
\beq\label{opt:NLP}
\underset{x\in\R^n}{\minimize}\;f(x) \;\; \textrm{s.t. } g_i(x)\le 0 \quad\forall i\in\I_0, \tag{NLP}
\eeq
where $\I_0 \defined \{0,1,\ldots,l_2\}$, $\I=\{1,\ldots,l_2\}$, $g_0(x):=Q^{1-\alpha}(x)$, 
and $g_i(x) \defined c_{2,i}(x)$ for $i\in[l_2]$. 
\Cref{def:constr-qualify} restates the
two standard constraint qualifications based on problem setting of \eqref{opt:NLP},
which will be used for characterizing its optimality conditions.
\begin{definition}\label{def:constr-qualify}
(1) The Mangasarian-Fromovitz constraint qualification (MFCQ) condition \cite{MFCQ1967} 
is satisfied by~\eqref{opt:NLP} at a point $x^*$ if there exists $d\in\mR^n$ 
such that $\inner{\nabla g_i(x^*)}{d}<0$ for all $i\in\mc{A}(x^*)$, where 
$\mc{A}(x^*):=\{i\in\I_0: g_i(x^*)=0\}$ is the set of active constraints at $x^*$.
\newline
(2) The constant positive linear independence (CPLD) condition \cite{CPLD2000} 
is satisfied by~\eqref{opt:NLP} at a point $x^*$, if there exists a neighborhood $U$
of $x^*$ such that for any $J\subseteq\I_0$ the set of gradients $\{\nabla g_i(x^*):\;i\in J\}$ are linearly dependent,
$\{\nabla g_i(x): i\in J\}$ are linearly dependent for all $x\in U$. 
\end{definition}

Given $x\in\mR^n, \mu\in\mR^{|\I_0|}_+, \rho>0$, we define the 
Powell-Hestenes-Rockafellar augmented Lagrangian \cite{birgin2005-num-compare-aug-lag-alg} by
\beq\label{eqn:aug-Lag}
\Phi(x,\rho,\mu) \defined f(x)+\frac{\rho}{2}\sum_{i\in\I_0}\max\left\{0, g_i(x)+\frac{\mu_i}{\rho}\right\}^2.
\eeq
The augmented Lagrangian \eqref{eqn:aug-Lag} will be use as the 
merit function in the augmented Lagrangian method for solving \eqref{opt:NLP},
and it is initially proposed for deterministic constrained optimization problem with inequality constraints.
For fixed parameters $\rho,\mu$, the gradient of $\Phi(x,\rho,\mu)$ is given by
\beq\label{eqn:grad-Phi}
\nabla\Phi(x,\rho,\mu) = \nabla{f(x)}+\rho\sum_{i\in\I_0}\max\bigcurl{0, g_i(x)+\frac{\mu_i}{\rho}}\nabla{g_i(x)}.
\eeq
Because the analytical form of $\nabla{g_0(x)}$ is assumed not to be available, 
we use the following estimator to approximate $\nabla\Phi(x,\lambda,\mu)$:
\beq\label{eqn:phi_N}
\ba
\phi_N(x,\rho,\mu,\beta) &\defined \nabla{f(x)}
+\rho\max\bigcurl{0, g_{0,N}(x)+\frac{\mu_i}{\rho}}G_N(x) \\
&\quad\;+\rho\sum_{i\in\I}\max\bigcurl{0, g_i(x)+\frac{\mu_i}{\rho}}\nabla{g_i(x)},
\ea
\eeq
where $\gN(x) \defined \Q{N}{x}$ and $G_N(x)$ is an estimator 
of $\nabla{g_0(x)}$ defined as
\beq
G_N(x) \defined \sum^n_{j=1}\frac{\Q{N}{y+\beta\hat{e}_j}-\Q{N}{y-\beta\hat{e}_j}}{2\beta}\hat{e}_j.
\eeq

We can build a local model to approximate the merit function in a small neighborhood 
$B(x_0,\Delta)$ of a point $x_0$. This local model can utilize the estimated
zeroth-order, first-order, and (when available) second-order information. 
The local model has the form 
\beq\label{eqn:m_N}
m_N(x,\rho,\mu,\beta)  \defined  \Phi_N(x_0,\rho,\mu) + \phi_N(x_0,\rho,\mu,\beta)^{\top}(x-x_0)
+\frac{1}{2}(x-x_0)^{\top}H(x-x_0),
\eeq
where $\phi_N$ is as in \eqref{eqn:phi_N} and $\Phi_N(x_0,\rho,\mu)$ is the $N$-sample approximation of $\Phi(x_0,\rho,\mu)$:
\beq\label{eqn:Phi_N}
\Phi_N(x_0,\rho,\mu) = f(x_0)+\frac{\rho}{2}\max\left\{0, g_{0,N}(x_0)+\frac{\mu_0}{\rho}\right\}^2
+\frac{\rho}{2}\sum_{i\in\I}\max\left\{0, g_i(x_0)+\frac{\mu_i}{\rho}\right\}^2
\eeq
The parameters $\beta$, $N$, $\Delta$, and the matrix $H$ 
control the accuracy of the approximation.
Note that in the algorithm analysis, we do not require additional
conditions on $H$ other than that its norm is bounded. 
In the implementation, we use a neighborhood
sampling approach to build a local quadratic model and extract the Hessian
of the local model as an approximation of $H$. See \Cref{sec:num} for further details.
We also define the quantity
\beq
\sigma_N(x,\mu)=\sqrt{\min\{-g_{0,N}(x),\mu_0\}^2+\sum_{i\in\I}\min\{-g_i(x),\mu_i\}^2},
\eeq
which will be used to quantify the level of feasibility. 
\section{Algorithms}
\label{sec:alg}
We now discuss our approach for optimizing \eqref{opt:NLP} where the problem
is decomposed into an outer problem and an inner problem that are solved
repeatedly. 
The inner problem is to minimize the sample-based merit function 
for fixed penalty parameter $\rho$ and Lagrangian multipliers $\mu$:
\beq\label{opt:inner}
\minimize_x \Phi(x,\rho,\mu).
\eeq 
Since the analytical form of quantile function is unknown, the $\Phi$
is approximated by a sample-based estimation $\Phi_N$ (defined in \eqref{eqn:Phi_N}) 
in each iteration for solving the inner problem with an increasing sample size $N$. 
We seek a local minimizer for the inner problem with this process being terminated 
once a certain criterion is met. In the outer problem, the ALM is applied to update $\rho$ and $\mu$. 

\begin{algorithm2e}[t]
  \fontsize{8}{8}\selectfont
  \SetAlgoNlRelativeSize{-4}
  \KwData{Initial point $x_{\mathrm{in}}$ and parameters $\rho,\mu,r,\varepsilon$
  such that $\mu\ge0$, $\rho>0$ and $0<r,\varepsilon<1$.}
 \textbf{Internal parameter:} Pick $0<\derate<1<\inrate$, $0<\eta_1,\eta_2,r_0<1$, $0<\Delta_0$. 
 
 \For{$k=0,1,\ldots$}{
 Choose $\beta \gets r_0\Delta_k$ for each iteration.
 Build a $(1-\varepsilon)$-probabilistically $\ck$-fully linear model $m_{N_k}$ (See \Cref{def:fully-linear-model})
 on $B(x^k,\Delta_k)$ with sample size $N_k$ satisfying \eqref{eqn:N_cond_kappa_alpha}

 Compute $s^k \gets \underset{s:\|s\|\le\Delta_k}{\textrm{arg min}}\;\; m_{N_k}(x^k+s,\rho,\mu,\beta)$

  \eIf{$m_{N_k}(x^k,\rho,\mu,\beta)-m_{N_k}(x^k+s^k,\rho,\mu,\beta)\ge\eta_1\emph{min}\{\Delta_k, \Delta^2_k\}$}{
    Calculate $\rho_k \gets \frac{\Phi_{N_k}(x^k,\rho,\mu)-\Phi_{N_k}(x^k+s^k,\rho,\mu)}{m_{N_k}(x^k,\rho,\mu,\beta)-m_{N_k}(x^k+s^k,\rho,\mu,\beta)}$ 

    \eIf{$\rho_k\ge\eta_2$}
    	{
		$x^{k+1} \gets x^k+s^k$;  $\Delta_{k+1} \gets \inrate\Delta_k$
    	}
    	{
		$x^{k+1} \gets x^k$;  $\Delta_{k+1} \gets \derate\Delta_k$
    	}
   }
   {
   $x^{k+1} \gets x^k$; $\Delta_{k+1} \gets \derate\Delta_k$
  }
  \eIf{$\Delta_{k+1}\le r$}
  { \textbf{return} $x^k$.}
  {Continue}
  }
  \caption{\fontsize{8}{8}\selectfont Trust-region method for merit functions with penalized chance constraints\label{alg:trust-region-merit-func}}
\end{algorithm2e}

For solving the inner problem \eqref{opt:inner},
we apply a trust-region algorithm (\Cref{alg:trust-region-merit-func}) with probabilistic ingredients.
It is similar to a traditional trust-region algorithm, which enlarges or shrinks
the trust region based on the relative improvement ratio. Of course, because the 
quantile function's value and gradient 
are based on samples approximation,
the algorithmic procedures (i.e., decisions on enlarging or shrinking the trust region
and the termination criteria) built on top of such information are all stochastic. 
Probabilistic trust-region algorithms have been developed (e.g., in~\cite{larson2014-deriv-free-stoch-opt})
for derivative-free unconstrained optimization problems where the form of the objective is unknown and
its value can be approximated only from sampling.
In contrast to the problems studied in~\cite{larson2014-deriv-free-stoch-opt},
in our objective $\Phi_N$, only the form
of the quantile function is assumed unavailable and must be estimated with a probabilistic model.

A probabilistic ALM for the outer problem is given as \Cref{alg:penalty}.
This algorithm updates the Lagrangian multipliers $\mu$ 
and the penalty parameter $\rho$ in \eqref{eqn:params-update}. 
\begin{algorithm2e}[t]
  \DontPrintSemicolon
    \fontsize{8}{8}\selectfont
  \SetAlgoNlRelativeSize{-4}
  \SetKw{true}{true}
\KwData{List of input parameters: $\theta_r,\theta_{\rho},\varepsilon\in(0,1),\mumax>0$, 
$\theta_{\rho}>1$, and convergence control parameters $\{r^k,\eta^k\}^\infty_{k=1}$ 
satisfying $\eta^k\to0$.}
Set $\mu_i=\bar{\mu}_i=\mu_{init}$ for some $\mu_{init}>0$ and $i\in\I_0$, and initialize $x^0$. \;
Set $\rho^1=\rho_{init}$ for some $\rho_{init}>0$. \;
\For{$k=1,\ldots$}{

\textbf{Step 1.~merit function minimization:} Apply \Cref{alg:trust-region-merit-func}
with the initial point $x^{k-1}$ obtained from the previous iteration when $k{\ge}1$ 
and with the input parameters $\rho^k,\bar{\mu}^k,r^k,\varepsilon$ to get a termination point 
$x^k$ as the output.\;

\textbf{Step 2.~convergence-control parameters reduction:}
Generate $N^k$ independent samples (The conditions of $N^k$ to guarantee convergence
will be given in Section~\ref{sec:convg-analy}.), 
evaluate $g_{0,N^k}(x^k)$, and update
\beq\label{eqn:params-update}
\ba
&\mu^{k+1}_0\gets\max\bigcurl{0, \bar{\mu}^{k}_0+\rho^{k}g_{0,N^{k}}(x^{k})}, \\
&\mu^{k+1}_i\gets\max\bigcurl{0, \bar{\mu}^{k}_i+\rho^{k}g_i(x^{k})}\quad\forall i\in\I, \\
&\bar{\mu}^{k+1}_i\gets\min\{\mumax,\mu^{k+1}_i\}\quad\forall i\in\I_0.
\ea
\eeq

\If{$\sigma_{N^k}(x^k,\mu^{k+1})>\eta^k$}
{$\rho^{k+1}\gets\theta_{\rho}\rho^{k}$ \label{lin:rho-update}} 
\Else{$\rho^{k+1}\gets\rho^{k}$}
}
\caption{\fontsize{8}{8}\selectfont A probabilistic augmented Lagrangian method for solving \eqref{opt:NLP}\label{alg:penalty}}
\end{algorithm2e}

\section{Convergence Analysis}
\label{sec:convg-analy}
We analyze the convergence of \Cref{alg:penalty}
when the quantile function $Q^{1-\alpha}(\cdot)$ is differentiable. 
We first analyze the probabilistic properties of local model used
in the trust-region method for approximating the merit function (\Cref{sec:loc-model-approx}). 
Then use these results to analyze the convergence of \Cref{alg:trust-region-merit-func}
for solving the merit-function minimization problem (\Cref{sec:convg-trust-region}).
We provide a global convergence result of \Cref{alg:penalty} (\Cref{sec:global-convg}).
The parameter updating rule in this algorithm is similar to \cite{birgin-practical-aug-lag,birgin2012-bd-penalty-param-aug-lag-ineq-constr},
which is a well recognized augmented Lagrangian method for solving deterministic nonlinear programs with equality and inequality constraints.
Note that the convergence of \Cref{alg:penalty} also relies on the
parameter setting, in particular, the sample size drawn from $\xi$
at every main iteration. The conditions on the sample size are 
specified in theorems of convergence.

\subsection{Probabilistic properties of local model approximation}
\label{sec:loc-model-approx}
In each iteration of \Cref{alg:trust-region-merit-func}, a quadratic model is
constructed as a local approximation of the merit function. Putative iterates
are produced by minimizing this model in a trust region. 
Clearly, the sufficient approximation accuracy is needed to ensure convergence of the
algorithm. A notion of $\ck$-fully linearity is introduced to characterize the approximation
accuracy within a neighborhood, which is formally given in \Cref{def:fully-linear-model}.
The probabilistic counterpart is given in \Cref{def:prob-fully-linear-model}
for a random local model. These definitions are brought from~\cite{larson2014-deriv-free-stoch-opt}.
\begin{definition}\label{def:fully-linear-model}
Let $f$ be continuously differentiable,
let $\ck \defined (\ckeg,\ckef)$ be a given vector of absolute
constants, and let $\Delta>0$ be given. 
A function $m_f\in C^1$ is a $\ck$-\textit{fully linear}
model of $f$ on $B(x,\Delta)$ if $\nabla m_f$ is Lipschitz continuous 
and for all $x\in\dom$ and $y\in B(x,\Delta)$, 
\beq
\norm{\nabla f(y)-\nabla m_f(y)} \le \ckeg\Delta,\textrm{ and } 
|f(y)-m_f(y)|\le \ckef\Delta^2,
\eeq
where $\kappa$ and $\Delta$ are independent of $x$.
\end{definition}

\begin{definition}\label{def:prob-fully-linear-model}
Consider running an algorithm to generate an infinite sequence of points $\{x^k\}^\infty_{k=1}$,
and let $\mc{F}_k$ be the $\sigma$-algebra representing the information available at iteration $k$.
Let $\ck \defined (\ckef,\ckeg)$ be a given vector of constants, let $\varepsilon\in(0,1)$, and let $\Delta>0$
be given. A random model $m_f$ generated based on samples of random parameters is 
$(1-\varepsilon)$-probabilistically 
$\ck$-fully linear on $B(x,\Delta)$ if 
\beq
\P(m_f\textrm{ is a $\ck$-fully linear model of $\Phi$ on $B(x,\Delta)$}|\mc{F}_{k-1})\ge 1-\varepsilon,
\eeq
where $\kappa$ and $\Delta$ are independent of $x$.
\end{definition}

\begin{definition}\label{def:local-approx-acc}
For fixed parameters $\rho,\mu$,  
the quadratic models in the trust-region \Cref{alg:trust-region-merit-func}
satisfy the ($\varepsilon$, $\theta$)-\textit{probabilistic local approximation accuracy} 
if there exists a $\ol{k}$ such that for any iteration $k>\ol{k}$
the following two conditions are satisfied:
\beq\label{eqn:local-approx-acc}
\ba
&\P\left[ \left|V_k\right|>
		\eta_1\eta_2\Delta^2_k \Big| \mc{F}_{k-1} \right]\le \varepsilon, \\
&\P\left[ \left|V_k\right|>
		(\eta_1\eta_2+w)\Delta^2_k \Big| \mc{F}_{k-1} \right]\le \frac{\theta}{w} \quad\forall w>0, \\
&\text{with } V_k \defined \Phi_{N_k}(x^k)-\Phi(x^k)+\Phi(x^k+s^k)-\Phi_{N_k}(x^k+s^k),
\ea
\eeq
where the Lagrangian and penalty parameters in the functions $\Phi_{N_k}$ and $\Phi$ are omitted here for simplicity,
$\Delta_k\le 1$ is the trust-region radius at iteration $k$, 
$s^k$ is trust-region subproblem solution at iteration $k$,
$\mc{F}_k$ is the $\sigma$-algebra representing 
the information available at iteration $k$,
and $\eta_1,\eta_2$ are trust-region updating parameters in \Cref{alg:trust-region-merit-func}.
\end{definition}

\begin{proposition}\label{prop:g(x)-GN(x0)}
Suppose \Cref{ass:f-c2-diff,ass:c1-diff,ass:Q-diff,ass:F_twice_diff} hold.
Let $L_g$ be a shared Lipschitz constant of $\nabla g_i$ for all $i\in\I_0$.
For any constants $a\ge0$, $\gamma\in(0,1)$, and any $x_0,x\in\dom$ satisfying $\norm{x-x_0}\le\Delta\le1$, 
suppose $\beta=r_0\Delta$ with $r_0<\frac{2L_g}{nL_Q}$ and 
$N\ge O\left( \frac{C^2_\dom\alpha(1-\alpha)n^2(\log\frac{n}{\gamma})^3}{d^2_\dom L^2_gr^2_0\Delta^2} \right)$,
then the following three inequalities hold:
\[
\ba
&\P\parenth{\norm{\nabla g_0(x) - G_N(x_0)}\ge 2L_g\Delta }\le\gamma,  \\
&\P\Big\{\norm{\max\bigcurl{0, g_0(x)+a}\nabla{g_0(x)} - \max\bigcurl{0, g_{0,N}(x_0)+a} G_N(x_0)}  \\
&\quad\ge\big[6\parenth{ \norm{\nabla g_0(x_0)}+|g_0(x_0)|+L_g}^2+3aL_g\big]\Delta\Big\}\le\gamma, \\
&\norm{\max\{0,g_i(x)+a\}\nabla g_i(x) - \max\{0,g_i(x_0)+a\}\nabla g_i(x_0) }  \\
&\le\big[2(|g_i(x)| + \norm{\nabla g_i(x_0)} + L_g)^2 + aL_g\big]\Delta. 
\ea
\]
\end{proposition}
\begin{proof}
To prove the first inequality, we notice that
\bdm
\ba
\norm{\nabla g_0(x) - G_N(x_0)} &\le \norm{\nabla g_0(x)-\nabla g_0(x_0)} + \norm{\nabla g_0(x_0) - G_N(x_0)} \\
&\le L_g\norm{x-x_0} + \norm{\nabla Q^{1-\alpha}(x_0) - \wh{D}_N(x_0)  },
\ea
\edm
where $\wh{D}_N(\cdot)$ is defined in \eqref{eqn:whG}.
Because 
$N\ge O\left( \frac{C^2_\dom\alpha(1-\alpha)n^2(\log\frac{n}{\gamma})^3}{d^2_\dom L^2_gr^2_0\Delta^4} \right)$, 
we can apply \Cref{thm:quant-grad-approx} to conclude that
\bdm
\ba
&\P\parenth{\norm{\nabla g_0(x) - G_N(x_0)}\ge L_g\Delta}\le \gamma,
&\P\parenth{\norm{\nabla g_0(x) - G_N(x_0)}\ge 2L_g\Delta}\le \gamma,
\ea
\edm
where the condition $r_0<\frac{2L_g}{nL_Q}$ ensures that $\beta<\frac{2L_g\Delta}{nL_Q}$ 
(a condition required by \Cref{thm:quant-grad-approx}). This proves the first inequality
of the proposition.
The second inequality of the proposition can be bounded as follows

\beq\label{eqn:|maxg-maxG|}
\ba
&\norm{\max\bigcurl{0, g_0(x)+a}\nabla{g_0(x)} - \max\bigcurl{0, g_{0,N}(x_0)+a} G_N(x_0)} \\
&\le\norm{\max\bigcurl{0, g_0(x)+a}\nabla{g_0(x)} - \max\bigcurl{0, g_0(x)+a}\nabla{g_0(x_0)} } \\
&\quad+\norm{\max\bigcurl{0, g_0(x)+a}\nabla{g_0(x_0)} - \max\bigcurl{0, g_0(x_0)+a}\nabla{g_0(x_0)} } \\
&\quad+\norm{\max\bigcurl{0, g_0(x_0)+a}\nabla{g_0(x_0)} - \max\bigcurl{0, g_0(x_0)+a}G_N(x_0) } \\
&\quad+\norm{\max\bigcurl{0, g_0(x_0)+a}G_N(x_0) - \max\bigcurl{0, g_{0,N}(x_0)+a}G_N(x_0) } \\
&\le\left(|g_0(x)|+a\right)\norm{\nabla{g_0}(x)-\nabla{g_0}(x_0)} + |g_0(x)-g_0(x_0)|\cdot\norm{\nabla{g_0}(x_0)} \\
&\quad+\left(|g_0(x_0)|+a\right)\norm{\nabla{g_0}(x_0)-G_N(x_0)} + |g_0(x_0)-g_{0,N}(x_0)|\cdot\norm{G_N(x_0)},
\ea
\eeq
where we use the property $|\max\{0,a\}-\max\{0,b\}|\le|a-b|$.
We know that with probability at least $1-2\gamma$ the following inequalities hold jointly
\beq\label{eqn:interm1}
\ba
&\norm{\nabla{g_0(x)}-G_N(x_0)}\le 2L_g\Delta, \\
&\norm{G_N(x_0)}\le\norm{\nabla{g_0}(x_0)} + \norm{\nabla{g_0}(x_0) - G_N(x_0)}\le\norm{\nabla{g_0}(x_0)}+2L_g\Delta, \\
&|g_0(x_0)-g_{0,N}(x_0)|\le{L_g}\Delta\; \textrm{(\Cref{cor:P(|Q-QN|)})}.
\ea
\eeq
Substituting \eqref{eqn:interm1} into \eqref{eqn:|maxg-maxG|}, we conclude that
with probability at least $1-2\gamma$, the following inequality holds:

\beq\label{eqn:|maxg-maxG|_2}
\ba
&\norm{\max\bigcurl{0, g_0(x)+a}\nabla{g_0(x)} - \max\bigcurl{0, g_{0,N}(x_0)+a} G_N(x_0)} \\
&\le\left(|g_0(x)|+a\right)L_g\Delta + \left(\norm{\nabla g_0(x_0)}\Delta+\frac{1}{2}L_g\Delta^2\right)\norm{\nabla g_0(x_0)} \\
&\quad+2\left(|g_0(x_0)|+a\right)L_g\Delta + \norm{G_N(x_0)}L_g\Delta \\
&\le\left(|g_0(x)|+a\right)L_g\Delta + \left(\norm{\nabla g_0(x_0)}\Delta+\frac{1}{2}L_g\Delta^2\right)\norm{\nabla g_0(x_0)} \\
&\quad+2\left(|g_0(x_0)|+a\right)L_g\Delta + (\norm{\nabla{g_0}(x_0)}+2L_g\Delta)L_g\Delta \\
&\le \left(|g_0(x)|+a\right)L_g\Delta + 2\left(|g_0(x_0)|+a\right)L_g\Delta  \\
&\quad+ \left(\norm{\nabla g_0(x_0)} + \frac{1}{2}L_g\right)\norm{\nabla g_0(x_0)}\Delta  + (\norm{\nabla{g_0}(x_0)}+2L_g)L_g\Delta.
\ea
\eeq
Applying the inequality 
\bdm
\ba
|g_0(x)|&\le|g_0(x_0)|+\norm{\nabla g_0(x_0)}\Delta+L_g\Delta^2/2 \\
&\le|g_0(x_0)|+\norm{\nabla g_0(x_0)}+L_g/2
\ea
\edm
to bound $|g_0(x)|$ in the first term of \eqref{eqn:|maxg-maxG|_2} and enlarging the coefficients leads to
the following inequality with probability at least $1-2\gamma$:

\beq
\ba
&\norm{\max\bigcurl{0, g_0(x)+a}\nabla{g_0(x)} - \max\bigcurl{0, g_{0,N}(x_0)+a} G_N(x_0)} \\
&\le3(|g_0(x_0)|+\norm{\nabla g_0(x_0)}+L_g+a)L_g\Delta \\
&\quad + \left(\norm{\nabla g_0(x_0)} + L_g\right)\norm{\nabla g_0(x_0)}\Delta  + (\norm{\nabla{g_0}(x_0)}+2L_g)L_g\Delta \\
&\le[6(|g_0(x_0)|+\norm{\nabla g_0(x_0)}+L_g)^2 + 3aL_g]\Delta.
\ea
\eeq
This proves the second inequality of the proposition.
Note that some constant factors 
have been absorbed in the sample complexity, for example
the factor 2 in $1-2\gamma$. The third 
inequality can be proved similarly as the second inequality.
\qed
\end{proof}

\begin{proposition}\label{prop:gamma-prob-linear-model}
Suppose \Cref{ass:f-c2-diff,ass:c1-diff,ass:Q-diff,ass:F_twice_diff} hold.
Let $x_0\in\dom$ be any point and $x\in{B}(x_0, \Delta)$. 
Let $0 < \Delta < 1$ be a trust-region radius and $r_0>0$ be constant. 
Set $\beta \defined r_0\Delta$ with $r_0<\frac{2L_g}{nL_Q}$.
If $N\ge O\left( \frac{C^2_\dom\alpha(1-\alpha)n^2(\log\frac{n}{\gamma})^3}{d^2_\dom L^2_gr^2_0\Delta^4} \right)$,
then $m_N(x,\rho,\mu,\beta)$ is a $(1-\gamma)$-probabilistic
$\ck$-fully linear model (\Cref{def:prob-fully-linear-model}) on $B(x_0,\Delta)$
with the parameter $\ck \defined (\ckeg,\ckef)$ 
where $\ckeg \defined 8K_{\dom}|\I_0|\left(1+\sum_{i\in\I_0}\mu_i+\rho\right)$,  
$\ckef \defined 6K_{\dom}|\I_0|\left(1+\sum_{i\in\I_0}\mu_i+\rho\right)$, and $K_{\dom}$
is a constant satisfying 
\begin{equation}\label{def:C}
\begin{aligned}
&K_\dom \ge L_f+L_g+\norm{H} ,\\
&K_\dom \ge \max_{x\in\ol{\dom}} \left(\norm{\nabla{g_i(x)}}+\abs{g_{i}(x)} +L_g\right)^2 \;\forall i\in\mc{I}_0.
\end{aligned}
\end{equation}
\end{proposition}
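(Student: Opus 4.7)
The plan is to verify the two requirements of a $\ck$-fully linear model from \Cref{def:fully-linear-model}: the gradient bound $\norm{\nabla\Phi(y,\rho,\mu)-\nabla m_N(y,\rho,\mu,\beta)} \le \ckeg\Delta$ and the value bound $|\Phi(y,\rho,\mu)-m_N(y,\rho,\mu,\beta)| \le \ckef\Delta^2$ for all $y\in B(x_0,\Delta)$, and then show these hold jointly with probability at least $1-\gamma$ under the stated sample-size condition. As preparation I would observe that $\nabla\Phi(\cdot,\rho,\mu)$ is Lipschitz with constant $L_\Phi$ expressible in terms of $L_f$, $L_g$, $\rho$, $\sum_i\mu_i$, and the uniform bounds on $|g_i|$ and $\norm{\nabla g_i}$ over $\ol{\dom}$, and hence $\Phi$ admits the standard quadratic Taylor remainder bound around $x_0$; these deterministic facts are pure consequences of \Cref{ass:f-c2-diff} and the definition of $\Phi$ in \eqref{eqn:aug-Lag}.

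For the gradient condition, the plan is to write
\[
\nabla\Phi(y) - \nabla m_N(y) = \bigl[\nabla\Phi(y)-\nabla\Phi(x_0)\bigr] + \bigl[\nabla\Phi(x_0)-\phi_N(x_0)\bigr] - H(y-x_0),
\]
so that the first and third brackets are deterministic and bounded by $L_\Phi\Delta$ and $\norm{H}\Delta$ respectively. The second bracket differs from zero only in the $i=0$ term of \eqref{eqn:grad-Phi} versus \eqref{eqn:phi_N}, namely $\rho[\max\{0,g_0(x_0)+\mu_0/\rho\}\nabla g_0(x_0) - \max\{0,g_{0,N}(x_0)+\mu_0/\rho\}G_N(x_0)]$. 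I would split this using the identity $\max\{0,a\}u-\max\{0,b\}v = \max\{0,a\}(u-v) + (\max\{0,a\}-\max\{0,b\})v$, yielding one piece controlled by $\norm{\nabla g_0(x_0)-G_N(x_0)}\le 2L_g\Delta$ (the first inequality in \Cref{prop:g(x)-GN(x0)}) and another piece controlled by $|g_0(x_0)-g_{0,N}(x_0)|\le L_g\Delta^2$ (a specialization of \Cref{cor:P(|Q-QN|)} at $\delta = L_g\Delta^2$). The stated sample-size lower bound is precisely what \Cref{prop:g(x)-GN(x0)} requires, and it dominates the sample size needed for \Cref{cor:P(|Q-QN|)}, so both events hold with probability at least $1-\gamma$ after a union bound.

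For the value condition, I would decompose
\[
\Phi(y) - m_N(y) = R_{\Phi}(y,x_0) + \bigl[\Phi(x_0)-\Phi_N(x_0)\bigr] + \bigl[\nabla\Phi(x_0)-\phi_N(x_0)\bigr]^\top(y-x_0) - \tfrac{1}{2}(y-x_0)^\top H(y-x_0),
\]
where $R_\Phi(y,x_0) \defined \Phi(y)-\Phi(x_0)-\nabla\Phi(x_0)^\top(y-x_0)$. The remainder $R_\Phi$ is $O(\Delta^2)$ by Lipschitz continuity of $\nabla\Phi$; the last quadratic term is clearly $\tfrac{1}{2}\norm{H}\Delta^2$; the linear bracket is bounded by the gradient-error analysis above times $\Delta$, giving an extra $O(\Delta^2)$; and the constant bracket $|\Phi(x_0)-\Phi_N(x_0)|$ is handled by the elementary identity $|\max\{0,a\}^2-\max\{0,b\}^2|\le|a-b|(|a|+|b|)$ applied to $a = g_0(x_0)+\mu_0/\rho$, $b = g_{0,N}(x_0)+\mu_0/\rho$, which together with $|g_0-g_{0,N}|\le L_g\Delta^2$ from \Cref{cor:P(|Q-QN|)} yields an $O(\Delta^2)$ bound. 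All stochastic events coincide with those used in the gradient bound, so no additional union-bound cost is incurred.

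The main obstacle will be the accounting: matching the collected error constants to the precise coefficients $\ckeg = 6K_{\dom}(1+\sum_{i\in\I_0}\mu_i+\rho)$ and $\ckef = 5K_{\dom}(1+\sum_{i\in\I_0}\mu_i+\rho)$. The identity $K_\dom\ge(\norm{\nabla g_i}+|g_i|+L_g)^2$ is the critical absorbing device: terms of the form $(|g_0|+\mu_0/\rho)\cdot 2L_g\Delta$ multiplied by $\rho$ give contributions like $L_g\sqrt{K_\dom}(\rho+\mu_0)\Delta\le K_\dom(\rho+\mu_0)\Delta$ (since $L_g\le\sqrt{K_\dom}$), while cross terms arising from the $i\in\I$ portion of $\nabla\Phi$ (which are also Lipschitz but deterministic) contribute $K_\dom(\rho+\mu_i)\Delta$ to each coordinate. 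Summing these, together with the $L_f\Delta$ and $\norm{H}\Delta$ contributions absorbed by $K_\dom\ge L_f+\norm{H}+2L_g$, and verifying the final coefficients are bounded by $6$ and $5$ respectively, is the most delicate (but routine) bookkeeping step.
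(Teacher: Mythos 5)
Your proposal is correct and follows essentially the same route as the paper's proof: both arguments reduce the fully-linear conditions to the two stochastic estimates $\norm{\nabla g_0(x_0)-G_N(x_0)}\le 2L_g\Delta$ and $\abs{g_0(x_0)-g_{0,N}(x_0)}\le L_g\Delta^2$ (from \Cref{prop:g(x)-GN(x0)} and \Cref{cor:P(|Q-QN|)}), obtain the value bound by integrating the gradient error plus the zeroth-order mismatch at $x_0$ via the $\max\{0,\cdot\}^2$ difference identity, and absorb all constants into $K_\dom$. The only cosmetic difference is that you isolate the sampling error at the single point $x_0$ and push the displacement $x_0\to y$ entirely through the Lipschitz constant of $\nabla\Phi$, whereas the paper bundles displacement and sampling error into the single quantity $\norm{g_0(x)\nabla g_0(x)-g_{0,N}(x_0)G_N(x_0)}$ handled by \Cref{prop:g(x)-GN(x0)}.
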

\begin{proof}
We omit the argument $\rho,\mu,\beta$ in the functions 
$\Phi(x,\rho,\mu)$, $\phi(x,\rho,\mu,\beta)$ 
and $m_N(x,\rho,\mu,\beta)$ to simplify notations in the proof.
First, applying the model definitions \eqref{eqn:grad-Phi}, \eqref{eqn:phi_N},
and \eqref{eqn:m_N} yields:
\beq\label{eqn:grad_Phi-m}
\ba
&\quad\norm{\nabla\Phi(x)-\nabla{m_N(x)}} \\
&=\norm{ \nabla{f(x)}+\rho\sum_{i\in\I_0}\max\bigcurl{0, g_i(x)+\frac{\mu_i}{\rho}}\nabla{g_i(x)} 
-\phi_N(x_0)-H(x-x_0)} \\
&\le\norm{\nabla{f(x)}-\nabla{f(x_0)}} + \norm{H(x-x_0)} \\
&\quad+\rho\norm{\max\bigcurl{0, g_0(x)+\frac{\mu_0}{\rho}}\nabla{g_0(x)} -\max\bigcurl{0, g_{0,N}(x_0)+\frac{\mu_0}{\rho}} G_N(x_0)} \\
&\quad+\sum_{i\in\I}\rho\norm{\max\bigcurl{0, g_i(x)+\frac{\mu_i}{\rho}}\nabla{g_i(x)} -\max\bigcurl{0, g_i(x_0)+\frac{\mu_i}{\rho}} \nabla g_i(x_0)}.
\ea
\eeq
By applying the second and the third inequalities from Proposition~\ref{prop:g(x)-GN(x0)},
we conclude that with probability at least $1-2\gamma$ the following inequalities hold:
\beq\label{eqn:grad_Phi-mN_final}
\ba
&\quad\norm{\nabla\Phi(x)-\nabla{m_N(x)}} \\
&\le(\norm{H}+L_f)\Delta +  \big[6\rho\parenth{ \norm{\nabla g_0(x_0)}+|g_0(x_0)|+L_g}^2+3\mu_0L_g\big]\Delta \\
&\quad+\sum_{i\in\I}\big[2\rho(|g_i(x)| + \norm{\nabla g_i(x_0)} + L_g)^2 + \mu_iL_g\big]\Delta \\
&\le{K_\dom}\Delta + (6\rho{K_\dom} + 3\mu_0{K_\dom})\Delta + \sum_{i\in\I}(2\rho{K_\dom}+\mu_i{K_\dom})\Delta \\
&\le8|\I_0|\left(1+\rho+\sum_{i\in\I_0}\mu_i\right){K_\dom}\Delta:=\ckeg\Delta.
\ea
\eeq
Using integral representation, we can rewrite $\Phi(x)-m_N(x)$ as:
\bdm
\ba
&\Phi(x)-m_N(x)=\Phi(x_0)-m_N(x_0) \\
&\quad +\int^1_0[\nabla\Phi((1-t)x_0+tx)^\top-\nabla m_N((1-t)x_0+tx)^\top](x-x_0)dt.
\ea
\edm
Then we can obtain the following inequality to bound $\abs{\Phi(x)-m_N(x)}$:
\bdm
\ba
&\abs{\Phi(x)-m_N(x)}\le \abs{\Phi(x_0)-m_N(x_0)} \\
&+ \int^1_0\norm{\nabla\Phi((1-t)x_0+tx)-\nabla m_N((1-t)x_0+tx)}\norm{x-x_0}dt.
\ea
\edm
The two terms on the right side of the above inequality can be bounded as follows
with joint probability at least $1-2\gamma$:

\beq\label{eqn:|Phi(x0)-mN(x0)|}
\ba
&\abs{\Phi(x_0)-m_N(x_0)}=\abs{\Phi(x_0)-\Phi_N(x_0)} \\
&=\abs{\frac{\rho}{2}\max\left\{0, g_0(x_0)+\frac{\mu_0}{\rho}\right\}^2 - \frac{\rho}{2}\max\left\{0, g_{0,N}(x_0)+\frac{\mu_0}{\rho}\right\}^2 } \\
&\le\frac{\rho}{2}\abs{\max\left\{0, g_0(x_0)+\frac{\mu_0}{\rho}\right\} + \max\left\{0, g_{0,N}(x_0)+\frac{\mu_0}{\rho}\right\} }
\cdot |g_0(x_0) - g_{0,N}(x_0)| \\
&\le\frac{\rho}{2}\abs{2\max\left\{0, g_0(x_0)+\frac{\mu_0}{\rho}\right\}+ |g_0(x_0) - g_{0,N}(x_0)|}\cdot |g_0(x_0) - g_{0,N}(x_0)| \\
&\le\left(\rho|g_0(x_0)| + \mu_0 +\frac{\rho}{2}|g_0(x_0) - g_{0,N}(x_0)| \right)\cdot |g_0(x_0) - g_{0,N}(x_0)| \\
&\le\left(\rho|g_0(x_0)| + \mu_0 +\frac{\rho}{2}L_g\Delta^2 \right)\cdot L_g\Delta^2 \;\text{(\Cref{cor:P(|Q-QN|)} and condition on $N$)} \\
&\le\left(\rho|g_0(x_0)| + \mu_0 +\frac{\rho}{2}L_g \right)\cdot L_g\Delta^2, 
\ea
\eeq
\beq
\int^1_0\norm{\nabla\Phi((1-t)x_0+tx)-\nabla m_N((1-t)x_0+tx)}\norm{x-x_0}tdt\le \frac{1}{2}\ckeg\Delta^2,
\eeq
where \eqref{eqn:grad_Phi-mN_final} is used to get the second inequality.
Therefore, we have that with probability at least $1-2\gamma$ the following inequality holds
\beq\label{eqn:Phi-mN}
\ba
\abs{\Phi(x)-m_N(x)}&\le\left(\rho|g_0(x_0)| + \mu_0 +\frac{\rho}{2}L_g \right)\cdot L_g\Delta^2 + \frac{1}{2}\ckeg\Delta^2 \\
&\le \frac{1}{2}\ckeg\Delta^2 + 2\Kdom\left(1+\sum_{i\in\I_0}\mu_i+\rho\right)\Delta^2 \\
&\le 6|\I_0|\left(1+\sum_{i\in\I_0}\mu_i+\rho\right)\Kdom\Delta^2:=\ckef\Delta^2.
\ea
\eeq
Inequalities \eqref{eqn:grad_Phi-mN_final} and \eqref{eqn:Phi-mN} conclude the proof
by absorbing the constant factor 2 associated with $\gamma$ into the sample complexity.
\qed
\end{proof}
\begin{remark}
The $N\sim 1/\Delta^4$ relationship between sample size and the trust-region size
matches that in~\cite{larson2014-deriv-free-stoch-opt}.
\end{remark}
\begin{theorem}\label{thm:(e,theta)-prob-loc-approx}
Suppose \Cref{ass:f-c2-diff,ass:c1-diff,ass:Q-diff,ass:F_twice_diff} hold.
Consider \Cref{alg:trust-region-merit-func} for given parameters $\rho,\mu$
and internal parameters $\derate,\inrate$, $\eta_1,\eta_2,r_0,\Delta_0$.
The following properties hold jointly \newline
$\mathrm{(a)}$ $m_{N_k}$ from \eqref{eqn:m_N} is a $(1-\varepsilon)$-probabilistically $\ck$-fully linear model, 
where $\ck \defined (\ckeg,\ckef)$, $\ckeg \defined 8\Kdom|\I_0|\parenth{1+\sum_{i\in\I_0}\mu_i+\rho}$, 
$\ckef \defined 6\Kdom|\I_0|\parenth{1+\sum_{i\in\I_0}\mu_i+\rho}$;
\newline
$\mathrm{(b)}$ \Cref{alg:trust-region-merit-func} satisfies the 
($\varepsilon$, $\theta$)-probabilistic local approximation
accuracy condition (\Cref{def:local-approx-acc})
provided that the number of samples $N_k$ drawn 
from the distribution of the random parameters $\xi$ satisfies
\beq\label{eqn:N_cond_kappa_alpha}
N_k\ge O\left(\frac{\Cdom^2n^2\alpha(1-\alpha)\left(\log\frac{n}{\varepsilon}\right)^3}{A_k\Delta^4_k}\right),
\eeq
assuming $\Delta_k<1$, where the factor $A_k$ is defined as
\beq\label{eqn:Ak}
\begin{aligned}
&A_k \defined \min\parenthcurl{
d^2_\dom L^2_gr^2_0,\; d^2_\dom\rho\eta_1\eta_2,\;d^2_\dom\rho\sqrt{\eta_1\eta_2\theta}\;
\frac{d^2_\dom\eta^2_1\eta^2_2}{(\mu_0+{\rho}K_\dom)^2},\;
\frac{d^2_\dom\eta_1\eta_2\theta}{(\mu_0+{\rho}K_\dom)^2}}.
\end{aligned}
\eeq
\end{theorem}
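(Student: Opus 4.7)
The plan is to handle parts (a) and (b) separately and then union-bound. Part (a) follows immediately by invoking \Cref{prop:gamma-prob-linear-model} with $\gamma \defined \varepsilon$: the stated values of $\ckeg$ and $\ckef$ match exactly, and the sample-size requirement of that proposition, $N_k \ge O(C_\dom^2\alpha(1-\alpha)n^2(\log(n/\varepsilon))^3/(d_\dom^2 L_g^2 r_0^2 \Delta_k^4))$, is precisely what the term $d_\dom^2 L_g^2 r_0^2$ inside the minimum defining $A_k$ produces.

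For part (b) the random quantity to control is $V_k = (\Phi_{N_k}(x^k) - \Phi(x^k)) + (\Phi(x^k+s^k) - \Phi_{N_k}(x^k+s^k))$. Only the $g_0$-term of the augmented Lagrangian is stochastic, so repeating the algebra of \eqref{eqn:|Phi(x0)-mN(x0)|} pointwise and using $|g_0(x)|\le \sqrt{K_\dom}$ from \eqref{def:C} yields
\[
|\Phi_{N_k}(x)-\Phi(x)| \le (\mu_0+\rho K_\dom)\,|g_0(x)-g_{0,N_k}(x)| + \tfrac{\rho}{2}|g_0(x)-g_{0,N_k}(x)|^2.
\]
Hence bounding $V_k$ reduces to controlling $|g_0(x)-g_{0,N_k}(x)|$ at $x=x^k$ and $x=x^k+s^k$, which is exactly what \Cref{cor:P(|Q-QN|)} and the moment version of \Cref{thm:quant-proc} provide.

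For the first condition in \Cref{def:local-approx-acc}, I would apply \Cref{cor:P(|Q-QN|)} at each of the two points, with confidence $\varepsilon/4$ and tolerance $\delta \defined \min\{\eta_1\eta_2\Delta_k^2/(4(\mu_0+\rho K_\dom)),\, \sqrt{\eta_1\eta_2\Delta_k^2/(2\rho)}\}$; this forces the linear and quadratic contributions each to be at most $\eta_1\eta_2\Delta_k^2/4$, so $|V_k|\le \eta_1\eta_2\Delta_k^2$ with conditional probability at least $1-\varepsilon$. The two branches of the $\min$ in $\delta$ reproduce the $A_k$ terms $\frac{d_\dom^2 \eta_1^2\eta_2^2}{(\mu_0+\rho K_\dom)^2}$ and $d_\dom^2\rho\eta_1\eta_2$.

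For the second condition, I use Markov's inequality: $\P[|V_k|>(\eta_1\eta_2+w)\Delta_k^2 \mid \mc{F}_{k-1}] \le \E[|V_k|\mid\mc{F}_{k-1}]/(w\Delta_k^2)$, which is at most $\theta/w$ whenever $\E[|V_k|\mid\mc{F}_{k-1}]\le \theta\Delta_k^2$. Moment estimates on $|g_0(x)-g_{0,N_k}(x)|$ are obtained by converting the Gaussian-coupling concentration of \Cref{thm:quant-proc} into an exponential tail $\P[|g_0-g_{0,N_k}|>t]\le C\exp(-c d_\dom^2 N_k t^2)$ (absorbing polylogarithmic factors into the final $(\log(n/\varepsilon))^3$ term) and then integrating, giving $\E[|g_0-g_{0,N_k}|^p] \le O((d_\dom^2 N_k)^{-p/2})$ for $p=1,2$. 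Plugging these into the pointwise decomposition and requiring each of the two summands of $\E[|V_k|\mid\mc{F}_{k-1}]$ to be at most $\theta\Delta_k^2/4$ yields the remaining two terms $\frac{d_\dom^2 \eta_1\eta_2\theta}{(\mu_0+\rho K_\dom)^2}$ and $d_\dom^2 \rho\sqrt{\eta_1\eta_2\theta}$ in $A_k$. Taking the minimum of all five sample-size lower bounds gives \eqref{eqn:N_cond_kappa_alpha}, and a final union bound over the events used in (a) and (b)(i) (both of probability at most $\varepsilon$, with constants absorbed into the big-$O$) delivers the joint conclusion.

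The main obstacle I expect is the conversion step: \Cref{thm:quant-proc} is phrased as a Gaussian-coupling statement for $\hat q_N^{1-\alpha}$ rather than a clean tail bound on $|g_0-g_{0,N_k}|$, so I must absorb the coupling slack, the constant $C_\dom$, and the $(\log N_k)$ prefactors into the final $\log(n/\varepsilon)^3$ term while keeping the $d_\dom^2$-dependence aligned with every branch of $A_k$; getting the five terms to collapse into a single minimum with the announced exponents of $\eta_1,\eta_2,\theta,\rho,\mu_0,K_\dom$ is a bookkeeping task that will drive most of the proof.
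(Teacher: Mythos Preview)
Your treatment of part (a) and of the first condition in \Cref{def:local-approx-acc} is essentially identical to the paper's: invoke \Cref{prop:gamma-prob-linear-model}, then use \Cref{cor:P(|Q-QN|)} at the two points $x^k$ and $x^k+s^k$ with a tolerance chosen so that both the linear and the quadratic pieces of $|\Phi_N-\Phi|$ are small. Those steps are fine.

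The gap is in your handling of the second condition (the $\theta/w$ tail). Your Markov argument requires $\E[|V_k|\mid\mc{F}_{k-1}]\le \theta\Delta_k^2$, and after inserting the moment bounds $\E[|g_0-g_{0,N_k}|^p]=O((d_\dom^2 N_k)^{-p/2})$ this produces sample-size conditions whose associated $A_k$ entries are of order $d_\dom^2\theta^2/(\mu_0+\rho K_\dom)^2$ (from the linear term) and $d_\dom^2\theta/\rho$ (from the quadratic term). These are \emph{not} the entries $d_\dom^2\eta_1\eta_2\theta/(\mu_0+\rho K_\dom)^2$ and $d_\dom^2\rho\sqrt{\eta_1\eta_2\theta}$ announced in \eqref{eqn:Ak}: the $\eta_1\eta_2$ factor never enters a pure Markov bound (you discarded it when you replaced $(\eta_1\eta_2+w)$ by $w$ in the denominator), and the $\rho$ dependence comes out on the wrong side. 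Since the theorem fixes $A_k$ explicitly, your route does not establish the stated sample-size condition; it proves a different, incomparable one.

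What the paper actually does for the second condition is repeat the tail-bound argument you used for the first condition, but with threshold $\tfrac{1}{2}(\eta_1\eta_2+w)\Delta_k^2$ and confidence $\theta/(2w)$ in place of $\tfrac{1}{2}\eta_1\eta_2\Delta_k^2$ and $\varepsilon/2$. This yields, for each $w>0$, a lower bound on $N_k$ of the form
\[
N_k \;\gtrsim\; \frac{C_\dom^2\alpha(1-\alpha)\bigl(\log\tfrac{2w}{\theta}\bigr)^3}{\min\Bigl\{\dfrac{d_\dom^2(\eta_1\eta_2+w)^2}{(\mu_0+\rho K_\dom)^2}\Delta_k^4,\; d_\dom^2\rho(\eta_1\eta_2+w)\Delta_k^2\Bigr\}}.
\]
To get a single condition valid for all $w$, the paper uses $\eta_1\eta_2+w\ge 2\sqrt{\eta_1\eta_2 w}$ and then maximizes $(\log(2w/\theta))^3/w$ and $(\log(2w/\theta))^3/\sqrt{w}$ over $w>\theta/2$; these maxima are $O(1/\theta)$ and $O(1/\sqrt{\theta})$ respectively, and combining them with the $\sqrt{\eta_1\eta_2 w}$ factor is precisely what produces the $\eta_1\eta_2\theta$ and $\rho\sqrt{\eta_1\eta_2\theta}$ terms in $A_k$. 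Replacing your Markov step with this $w$-dependent application of \Cref{cor:P(|Q-QN|)} and the subsequent optimization over $w$ closes the gap.
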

\begin{proof}
To simplify notation in the proof, we omit the iteration index $k$ in \Cref{alg:trust-region-merit-func} and
the arguments $\rho,\mu,\beta$ in the functions $\Phi(x,\rho,\mu)$ and $\Phi_N(x,\rho,\mu)$.
\newline
(a) 
 From \Cref{prop:gamma-prob-linear-model}, if
 $N\ge O\left( \frac{C^2_\dom\alpha(1-\alpha)n^2(\log\frac{n}{\varepsilon})^3}{d^2_\dom L^2_gr^2_0\Delta^4} \right)$,
$m_N$ is a $(1-\varepsilon)$-probabilistically $\ck$-fully linear model,
with $\ckeg \defined 8\Kdom|\I_0|\parenth{1+\sum_{i\in\I_0}\mu_i+\rho}$ \newline
and $\ckef \defined 6\Kdom|\I_0|\parenth{1+\sum_{i\in\I_0}\mu_i+\rho}$. 
This sample complexity is dominated by \eqref{eqn:N_cond_kappa_alpha}.
\newline
(b)
It suffices to show that the model $m_{N_k}$ in \Cref{alg:trust-region-merit-func} 
satisfies the ($\varepsilon$, $\theta$)-probabilistic local approximation accuracy condition.
Note that the probability in the first condition of \eqref{eqn:local-approx-acc}
can be estimated using the union bound:
\[
\small
\ba
&\P\left[ \left|\Phi_N(x)-\Phi(x)+\Phi(x+s)-\Phi_N(x+s)\right|>
		\eta_1\eta_2\Delta^2\Big| \mc{F}_{k-1} \right] \\
&\le\P\left[ \left| \Phi_N(x)-\Phi(x) \right|>\frac{1}{2} \eta_1\eta_2\Delta^2 \Big|\mc{F}_{k-1} \right] +\P\left[ \left| \Phi_N(x+s)-\Phi(x+s) \right|>\frac{1}{2} \eta_1\eta_2\Delta^2 \Big|\mc{F}_{k-1} \right],
\ea
\]
where $\Phi_N$ is defined in \eqref{eqn:Phi_N}.
Similar as the first a few steps in the derivation of \eqref{eqn:|Phi(x0)-mN(x0)|}, the term $\abs{\Phi_N(x)-\Phi(x)}$ can be bounded deterministically as:
\bdm
\ba
&\abs{\Phi_N(x)-\Phi(x)}\le\abs{\frac{\rho}{2}\max\left\{0, g_{0,N}(x)+\frac{\mu_0}{\rho}\right\}^2 - \frac{\rho}{2}\max\left\{0, g_0(x)+\frac{\mu_0}{\rho}\right\}^2 } \\
&\le \left(\rho|g_0(x)| + \mu_0 +\frac{\rho}{2}|g_0(x) - g_{0,N}(x)| \right)\cdot |g_0(x) - g_{0,N}(x)|.
\ea
\edm
\Cref{cor:P(|Q-QN|)} implies that if
$N\ge O\left( \frac{C^2_\dom\alpha(1-\alpha)(\log\frac{1}{\varepsilon})^3}{d^2_\dom t^2\Delta^4} \right)$, \newline
then
$
\P\parenth{\abs{g_{0,N}(x)-g_0(x)}\le t \Delta^2 }\ge 1-\frac{\varepsilon}{2}.
$
(The constant $1/2$ in the probability can be absorbed into the complexity.)
Then with probability at least $1-\frac{\varepsilon}{2}$ 
\bdm
\ba
&\abs{\Phi_N(x)-\Phi(x)}\le
\parenth{\mu_0+\rho |g_0(x)| +\frac{\rho}{2}t\Delta^2 } t\Delta^2
\le\parenth{\mu_0+\rho{K_\dom} +\frac{\rho}{2}t\Delta^2 } t\Delta^2,
\ea
\edm
where $t$ is a parameter to be determined.
We can set $t$ to satisfy $(\mu_0+\rho{K_\dom})t\le\frac{1}{4}\eta_1\eta_2$
and $\frac{\rho}{2}t^2\Delta^2\le\frac{1}{4}\eta_1\eta_2$ to ensure that
\bdm
\P\parenth{\abs{\Phi_N(x)-\Phi(x)}\le\frac{1}{2}\eta_1\eta_2\Delta^2 \Big| \mc{F}_{k-1}}\ge 1-\frac{\varepsilon}{2}.
\edm
This means if $N$ satisfies
\beq\label{eqn:N>log1/epsilon_D^4}
N\ge 
O\left(
\frac{C^2_\dom \alpha(1-\alpha)(\log\frac{1}{\varepsilon})^3}{
\min\left\{\frac{d^2_\dom\eta^2_1\eta^2_2\Delta^4}{(\mu_0+\rho{K_\dom})^2},\;d^2_\dom\rho\eta_1\eta_2\Delta^2\right\}} 
\right),
\eeq 
then 
\beq\label{eqn:P(PhiN-Phi)-interm1}
\ba
&\P\parenth{\abs{\Phi_N(x)-\Phi(x)}\ge \frac{1}{2}\eta_1\eta_2\Delta^2 \Big| \mc{F}_{k-1}}\le \frac{\varepsilon}{2}, \textrm{ and hence } \\
&\P\left[ \left|\Phi_N(x)-\Phi(x)+\Phi(x+s)-\Phi_N(x+s)\right|\ge
		\eta_1\eta_2\Delta^2 \Big| \mc{F}_{k-1} \right]\le \varepsilon,
\ea
\eeq
i.e., the first condition of \eqref{eqn:local-approx-acc} holds, where we have used the fact that
the complexity condition \eqref{eqn:N>log1/epsilon_D^4} is independent of the position and hence
it can be applied to the position $x$ and $x+s$.
Next, we show that the second condition of \eqref{eqn:local-approx-acc} holds. Similarly, we have
\[
\ba
&\P\parenthcurl{\abs{\Phi_N(x)-\Phi(x)+\Phi(x+s)-\Phi_N(x+s)}>(\etaprod+w)\Delta^2\Big|\mc{F}_{k-1}  } \\
&\le\P\parenthcurl{\abs{\Phi_N(x)-\Phi(x)}>\frac{1}{2}(\etaprod+w)\Delta^2\Big|\mc{F}_{k-1}  } \\
&\quad+\P\parenthcurl{\abs{\Phi_N(x+s)-\Phi(x+s)}>\frac{1}{2}(\etaprod+w)\Delta^2\Big|\mc{F}_{k-1}  }.
\ea
\]
It suffices to find the condition for $N$ such that 
\beq\label{eqn:P(|PhiN-Phi|)-interm2}
\P\parenthcurl{\abs{\Phi_N(x)-\Phi(x)}\ge\frac{1}{2}(\etaprod+w)\Delta^2\Big|\mc{F}_{k-1}}\le\frac{\theta}{2w}.
\eeq
Similarly as \eqref{eqn:P(PhiN-Phi)-interm1}, a sufficient condition of $N$
for \eqref{eqn:P(|PhiN-Phi|)-interm2} to hold is:
\beq\label{eqn:N_complexity_min}
\ba
N\ge O\left(\frac{\Cdom^2\alpha(1-\alpha)(\log\frac{2w}{\theta})^3}{
\min\left\{\frac{d^2_\dom(\eta_1\eta_2+w)^2}{(\mu_0+\rho{K_\dom})^2}\Delta^4,\;
d^2_\dom \rho(\eta_1\eta_2+w)\Delta^2
\right\}
}\right)  \quad\forall w\ge \theta/2.
\ea
\eeq
The above complexity bound can be strengthened by applying the inequalities 
$\eta_1\eta_2+w\ge2\sqrt{\eta_1\eta_2w}$ to lower bound the denominator in 
\eqref{eqn:N_complexity_min} as (up-to a constant factor)
\beq
\min\left\{\frac{d^2_\dom\eta_1\eta_2w}{(\mu_0+\rho{K_\dom})^2}\Delta^4,\;
d^2_\dom \mu\sqrt{\eta_1\eta_2w}\Delta^2 
\right\}.
\eeq
It remains to find an upper bound for $\frac{(\log\frac{2w}{\theta})^3}{w}$
and $\frac{(\log\frac{2w}{\theta})^3}{\sqrt{w}}$ with $w>\theta/2$.
One can verify that the above two functions of $w$ achieve the maximum
value at $w=\frac{\theta e^3}{2}$ and $w=\frac{\theta e^6}{2}$, respectively.
This indicates that in order to ensure the second condition in \eqref{eqn:local-approx-acc} 
holds we can set $N$ to be
\beq\label{eqn:N-cond2}
N\ge O\left(\frac{C^2_\dom \alpha(1-\alpha)}{\min\left\{\frac{d^2_\dom \eta_1\eta_2\theta\Delta^4}{(\mu_0+\rho{K_\dom})^2},\;
d^2_\dom\rho\sqrt{\eta_1\eta_2\theta}\Delta^2 \right\}} \right)
\eeq
Notice that the sample complexity condition \eqref{eqn:N_cond_kappa_alpha}
dominates \eqref{eqn:N>log1/epsilon_D^4} and \eqref{eqn:N-cond2}, 
which concludes the proof.
\qed
\end{proof}

\subsection{Probability guarantee of the trust-region method for minimizing the merit function}
\label{sec:convg-trust-region}
The work~\cite{larson2014-deriv-free-stoch-opt} analyzes a probabilistic derivative-free 
trust-region method for solving an stochastic unconstrained optimization problem. 
The convergence result from Theorem~1 in \cite{larson2014-deriv-free-stoch-opt}
can be applied to our merit function when the Lagrangian parameters are fixed.
Therefore, it serves as a base of convergence analysis for the quantile constrained problem 
concerned here. The main result presented in \cite{larson2014-deriv-free-stoch-opt} is 
summarized in Theorem~\ref{thm:trust-region-deriv-free-unconstr}.

\begin{algorithm2e}[t]
  \fontsize{8}{8}\selectfont
  \SetAlgoNlRelativeSize{-4}
 Pick $0<\derate<1<\inrate$, $0<\eta_1,\eta_2,r_0<1$, $0<\Delta_0<1$. 
 
 \For{$k=0,1,\ldots$}{
 Choose $\beta \gets r_0\Delta_k$ for each iteration.
 Build a $(1-\varepsilon)$-probabilistically $\ck$-fully linear model $m_k$ 
 on $B(x^k,\Delta_k)$.
 
 Compute $s^k \gets \underset{s:\|s\|\le\Delta_k}{\textrm{arg min}}\;\; m_k(x^k+s)$

  \eIf{$m_k(x^k)-m_k(x^k+s^k,\lambda,\mu,\beta)\ge\eta_1\min\{\Delta_k,\Delta^2_k\}$}{
    Make a stochastic estimation of $h(x^k)$ and $h(x^k+s^k)$, which are denoted as $F_k(x^k)$
    and $F_k(x^k+s^k)$, respectively. 
    Calculate $\rho_k \gets \frac{F_k(x^k)-F_k(x^k+s^k)}{m_k(x^k)-m_k(x^k+s^k)}$. 

    \eIf{$\rho_k\ge\eta_2$}
    	{
		$x^{k+1} \gets x^k+s^k$;  $\Delta_{k+1} \gets \min\{1, \inrate\Delta_k\}$
    	}
    	{
		$x^{k+1} \gets x^k$;  $\Delta_{k+1} \gets \derate\Delta_k$
    	}
   }
   {
   $x^{k+1} \gets x^k$; $\Delta_{k+1} \gets \derate\Delta_k$
  }
  $k\gets k+1$.
  }
  \caption{\fontsize{8}{8}\selectfont Trust-region algorithm used in \cite{larson2014-deriv-free-stoch-opt} to minimize $h$.\label{alg:trust-region-general-func}}
\end{algorithm2e}

\begin{theorem}[\cite{larson2014-deriv-free-stoch-opt}]
\label{thm:trust-region-deriv-free-unconstr}
Let $h$ be a general smooth function that has bounded level sets and $\nabla h$ 
is Lipschitz continuous with constant $L_h$. 
Suppose Algorithm~\ref{alg:trust-region-general-func}
is applied to minimize $h$ and the function values of $h$
can only be accessed via the stochastic estimation model $F_k$ at any iteration $k$ with 
Lipschitz constant bounded by $L_F$. 
Suppose for every iteration $k$ that is greater than some threshold, the following two conditions hold: 
\beq\label{eqn:tr-cond1}
\mb{P}[|F_k(x^k)-h(x^k)+h(x^k+s^k)-F_k(x^k+s^k)|>\eta_1\eta_2\Delta^2_k|\mc{F}_{k-1}]\le \varepsilon
\eeq
and there is a constant $\theta>0$ such that 
\beq\label{eqn:tr-cond2}
\mb{P}[|F_k(x^k)-h(x^k)+h(x^k+s^k)-F_k(x^k+s^k)|>(\eta_1\eta_2+w)\Delta^2_k|\mc{F}_{k-1}]\le\frac{\theta}{w} \quad \forall w>0.
\eeq
Then 
\beq
\sum^\infty_{k=1}\Delta^2_k<\infty\;\textrm{ and }\; \lim_{k\to\infty} \norm{\nabla h(x^k)} = 0,
\eeq
almost surely
and the quantity 
\beq
\Psi_k = \max\left\{\frac{\norm{\nabla h(x^k)}}{\Delta_k},\;\mc{L}_2 \right\}
\eeq
is a supermartingale under the natural filtration, where 
\bdm
\ba
\mc{L}_1&=\max\left\{2L_F+\ckeg,\frac{(3-\eta_2)\ckeg+4\ckef}{1-\eta_2},\;\frac{2L_h}{\inrate-1},\;\frac{L_h}{\inrate}\left(\frac{1-\derate}{\derate}-\frac{1-\inrate}{\inrate} \right)^{-1} \right\}, \\
\mc{L}_2&=\max\left\{\frac{\mc{L}_1}{\derate},\; L_h+\mc{L}_1 \right\}.
\ea
\edm
\end{theorem}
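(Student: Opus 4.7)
The plan is to follow the standard supermartingale-based convergence analysis of probabilistic trust-region methods; since the theorem is quoted verbatim from prior work, my proof sketch mirrors the structure one would use to establish such results from scratch. First, I would introduce a random Lyapunov function of the form $\Phi_k = \nu (h(x^k) - h_{\mathrm{low}}) + (1-\nu) \Delta_k^2$, where $h_{\mathrm{low}}$ is a lower bound on $h$ (which exists because $h$ has bounded level sets) and $\nu \in (0,1)$ is a parameter to be chosen. The goal is to demonstrate that $\Phi_k$ is a supermartingale adapted to the natural filtration $\mathcal{F}_k$ generated by the random model realizations and the random function estimates.

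Second, I would partition each iteration according to the joint occurrence of two random events: (i)~the model $m_k$ is $\kappa$-fully linear on $B(x^k,\Delta_k)$, which by hypothesis occurs with probability at least $1-\varepsilon$; and (ii)~the estimation error $|V_k| := |F_k(x^k) - h(x^k) + h(x^k+s^k) - F_k(x^k+s^k)|$ is small in the sense of \eqref{eqn:tr-cond1}. On the intersection of these ``good'' events, standard deterministic trust-region arguments apply: if $\norm{\nabla h(x^k)} \ge \mathcal{L}_1 \Delta_k$ then the step is accepted and $h(x^{k+1}) \le h(x^k) - c\,\Delta_k^2$ for a constant $c$ depending on $\eta_1,\eta_2,\ckeg,\ckef$; otherwise, the iteration may be unsuccessful but then $\Delta_{k+1} = \derate \Delta_k$ reduces $\Delta_k^2$. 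The thresholds defining $\mathcal{L}_1$ and $\mathcal{L}_2$ are exactly the ones that make these trust-region arguments work in the presence of the error terms $\ckeg \Delta_k$ and $\ckef \Delta_k^2$.

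Third, on the complementary ``bad'' events, one bounds the potential \emph{increase} in $\Phi_k$ by using the tail condition \eqref{eqn:tr-cond2}: since $\mathbb{P}(|V_k|>w\Delta_k^2 \mid \mathcal{F}_{k-1}) \le \theta/w$ for $w>\eta_1\eta_2$, one has
\[
\mathbb{E}\bigl[|V_k|\,\mathbbm{1}_{\mathrm{bad}}\mid\mathcal{F}_{k-1}\bigr]
\le \Delta_k^2\Bigl(\eta_1\eta_2\varepsilon + \int_{\eta_1\eta_2}^\infty \tfrac{\theta}{w}\,dw\Bigr),
\]
which (with careful handling of the integral via truncation and the model-accuracy probability) can be absorbed into a constant multiple of $\Delta_k^2$ with coefficient less than that of the decrease on good events. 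Choosing $\nu$ sufficiently small then yields $\mathbb{E}[\Phi_{k+1}\mid\mathcal{F}_{k-1}] \le \Phi_k - c' \Delta_k^2$ whenever $\norm{\nabla h(x^k)}/\Delta_k > \mathcal{L}_2$, and $\mathbb{E}[\Phi_{k+1}\mid \mathcal{F}_{k-1}] \le \Phi_k$ in general. By the supermartingale convergence theorem, $\Phi_k$ converges a.s.; together with the recursion this forces $\sum_{k=1}^\infty \Delta_k^2 < \infty$ a.s.

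Finally, the supermartingale property of $\Psi_k = \max\{\norm{\nabla h(x^k)}/\Delta_k,\;\mathcal{L}_2\}$ would be established by a separate case analysis showing that successful steps drive $\norm{\nabla h(x^{k+1})}/\Delta_{k+1}$ below the current value in expectation (using Lipschitz continuity of $\nabla h$ and the $\inrate$ expansion), while unsuccessful steps leave the numerator fixed but shrink $\Delta_k$ by $\derate$, scaling the ratio by $1/\derate$ — so the definition of $\mathcal{L}_1$ is calibrated to make $\Psi_k$ a supermartingale. Combining $\sum \Delta_k^2 < \infty$ with the supermartingale property of $\Psi_k$ then yields $\norm{\nabla h(x^k)}\to 0$ almost surely via a contradiction argument. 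The main obstacle is the third step: rigorously balancing the low-probability ``bad'' events against the deterministic decrease on good events requires simultaneously exploiting both the probabilistic fully-linear model assumption and the two-sided moment control \eqref{eqn:tr-cond1}--\eqref{eqn:tr-cond2} on the function estimators; the tail bound \eqref{eqn:tr-cond2} is what enables passing from probability statements to the expectation inequalities that drive the supermartingale argument.
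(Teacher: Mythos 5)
You should first note that the paper contains no proof of \Cref{thm:trust-region-deriv-free-unconstr}: it is imported verbatim from \cite{larson2014-deriv-free-stoch-opt} and used as a black box in \Cref{thm:merit-convg}, so there is no in-paper argument to compare against. Judged on its own terms, your outline follows the standard template for stochastic trust-region analyses; the Lyapunov function $\nu\bigl(h(x^k)-h_{\mathrm{low}}\bigr)+(1-\nu)\Delta_k^2$ is closer in spirit to STORM-style analyses than to the argument of the cited reference, which works more directly with the decrease accumulated over successful iterations and with the supermartingale $\Psi_k$, but the case analysis you describe for $\Psi_k$ (successful steps controlled via the fully linear model and the $\inrate$ expansion, unsuccessful steps scaling the ratio by $1/\derate$, with $\mc{L}_1,\mc{L}_2$ calibrated accordingly) is essentially the right one.

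There is, however, a concrete gap at exactly the step you flag as the main obstacle. Your bound
\[
\E\bigl[\,|V_k|\,\bs{1}_{\mathrm{bad}}\mid\mc{F}_{k-1}\bigr]\;\le\;\Delta_k^2\Bigl(\eta_1\eta_2\varepsilon+\int_{\eta_1\eta_2}^{\infty}\tfrac{\theta}{w}\,dw\Bigr)
\]
is vacuous because $\int_{\eta_1\eta_2}^{\infty}\theta/w\,dw$ diverges. This is not a removable technicality: the hypothesis \eqref{eqn:tr-cond2} is a Pareto-type tail with exponent one, which is compatible with $\E[\,|V_k|\mid\mc{F}_{k-1}]=+\infty$, and intersecting with \eqref{eqn:tr-cond1} does not help since $\int_0^{\infty}\min\{\varepsilon,\theta/w\}\,dw$ also diverges. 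Consequently ``absorbing the bad events into a constant multiple of $\Delta_k^2$'' through a first-moment bound on $|V_k|$ cannot work as written, and the parameter $\nu$ in your second step has nothing finite to balance against; the claimed inequality $\E[\Phi_{k+1}\mid\mc{F}_{k-1}]\le\Phi_k-c'\Delta_k^2$ does not follow. The tail condition \eqref{eqn:tr-cond2} must instead be applied at the specific deviation levels at which the algorithmic logic can actually be corrupted (i.e., to the probability of a spurious acceptance or a spurious increase of a prescribed size, rather than to the expectation of the raw error), which is how it is exploited in the cited reference. Until your third step is restructured along those lines, the argument for $\sum_k\Delta_k^2<\infty$ does not go through.
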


\begin{theorem}[convergence of the trust-region algorithm]\label{thm:merit-convg}
Suppose \Cref{ass:f-c2-diff,ass:c1-diff,ass:Q-diff,ass:F_twice_diff} hold.
Suppose \Cref{alg:trust-region-merit-func} is applied 
with constants $\inrate$, $\derate$, fixed parameters $\rho,\mu$,
and the sample size $N_k$ satisfies \eqref{eqn:N_cond_kappa_alpha} at each iteration.
Let $\{x^k\}^{\infty}_{k=1}$ be the sequence generated from this algorithm when the termination
criterion is disregarded,
and $\{\Delta_k\}^{\infty}_{k=1}$ be the corresponding trust-region radii.
Then 
\beq
\sum^{\infty}_{k=1}\Delta^2_k<\infty,\quad and \quad
\lim_{k\to\infty} \|\nabla\Phi(x^k,\rho,\mu)\|=0
\eeq
almost surely.
Furthermore, there exists a parameter $\mc{L}(\rho,\mu,\Kdom)$, which only
depends on $\rho$, $\mu$, $\Kdom$ (defined in \eqref{def:C}), 
all the Lipschitz constants $L_f$, $L_{c_2}$, $L_Q$, $\norm{H}$ 
(the Hessian used for the local approximation model \eqref{eqn:m_N})
and parameters $\eta_1$, $\eta_2$, $\inrate$, $\derate$ in \Cref{alg:trust-region-merit-func},
such that the sequence $\{\Psi_k\}^{\infty}_{k=1}$ is a supermartingale under the natural filtration,
where 
\beq\label{eqn:Psi_k}
\Psi_k(\rho,\mu,\Kdom) \defined \max\left\{\frac{\|\nabla\Phi(x^k,\rho,\mu)\|}{\Delta_k},\mc{L}(\rho,\mu,\Kdom)\right\}.
\eeq 
\end{theorem}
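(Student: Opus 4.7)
The plan is to derive \Cref{thm:merit-convg} as a direct application of the general trust-region result \Cref{thm:trust-region-deriv-free-unconstr} of~\cite{larson2014-deriv-free-stoch-opt}, with the identifications $h(\cdot) \leftarrow \Phi(\cdot,\rho,\mu)$, $F_k(\cdot) \leftarrow \Phi_{N_k}(\cdot,\rho,\mu)$, and $m_k \leftarrow m_{N_k}(\cdot,\rho,\mu,\beta)$. With these identifications, \Cref{alg:trust-region-merit-func} (run without the termination test) is structurally identical to \Cref{alg:trust-region-general-func} applied to $\Phi$. Consequently, once the hypotheses of \Cref{thm:trust-region-deriv-free-unconstr} are verified for this instantiation, its conclusions give the almost sure square-summability of $\{\Delta_k\}$, the almost sure convergence $\|\nabla\Phi(x^k,\rho,\mu)\|\to 0$, and the supermartingale property of $\Psi_k$ in \eqref{eqn:Psi_k}.

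The verification splits into three ingredients. First, I would establish that $\Phi(\cdot,\rho,\mu)$ is continuously differentiable with a Lipschitz-continuous gradient: by \Cref{ass:f-c2-diff,ass:Q-diff}, the gradients $\nabla f$, $\nabla c_2$, $\nabla g_0 = \nabla Q^{1-\alpha}$ are all Lipschitz, so each term $\max\{0, g_i(x)+\mu_i/\rho\}\nabla g_i(x)$ in \eqref{eqn:grad-Phi} is Lipschitz on the bounded domain $\ol{\dom}$ with a constant expressible in $L_f$, $L_{c_2}$, $L_Q$, $K_\dom$, $\rho$, and $\mu$. Summing these bounds yields an aggregate Lipschitz constant $L_h(\rho,\mu,K_\dom)$ for $\nabla\Phi$, and an analogous argument bounds the Lipschitz constant $L_F$ of $\Phi_{N_k}$. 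Second, I would invoke \Cref{thm:(e,theta)-prob-loc-approx}(a) to certify that the models $m_{N_k}$ are $(1-\varepsilon)$-probabilistically $\ck$-fully linear with the specified $(\ckeg,\ckef)$, which is precisely the model hypothesis required by \Cref{thm:trust-region-deriv-free-unconstr}. Third, I would invoke \Cref{thm:(e,theta)-prob-loc-approx}(b) — which ensures the $(\varepsilon,\theta)$-probabilistic local approximation accuracy of \Cref{def:local-approx-acc} — to verify the two conditional probability bounds \eqref{eqn:tr-cond1} and \eqref{eqn:tr-cond2} with $F_k = \Phi_{N_k}$. Because the sample size $N_k$ is assumed to satisfy \eqref{eqn:N_cond_kappa_alpha}, which dominates both sample complexities appearing in \Cref{thm:(e,theta)-prob-loc-approx}, these hypotheses hold for all sufficiently large $k$.

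Having verified the hypotheses, \Cref{thm:trust-region-deriv-free-unconstr} delivers the two almost sure conclusions directly. For the supermartingale statement, the same theorem guarantees that $\Psi_k = \max\{\|\nabla\Phi(x^k)\|/\Delta_k,\mc{L}_2\}$ is a supermartingale with $\mc{L}_2$ computed from $\mc{L}_1$, which in turn depends only on $L_h$, $L_F$, $\ckeg$, $\ckef$, $\eta_2$, $\inrate$, and $\derate$. I would then set
\[
\mc{L}(\rho,\mu,\Kdom) \defined \max\left\{\frac{\mc{L}_1(\rho,\mu,\Kdom)}{\derate},\; L_h(\rho,\mu,\Kdom)+\mc{L}_1(\rho,\mu,\Kdom)\right\},
\]
which exhibits the stated dependence on $\rho$, $\mu$, $\Kdom$, the Lipschitz constants, $\|H\|$, and the algorithmic parameters.

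The main technical obstacle is a bookkeeping one rather than a conceptual one: carefully tracking how $L_h$ and $L_F$ depend on the penalty parameter $\rho$ and the multipliers $\mu$ (since each appears multiplicatively inside the $\max\{0,\cdot\}$ terms of \eqref{eqn:aug-Lag}), and then ensuring that the resulting aggregate constant fits the functional form claimed for $\mc{L}(\rho,\mu,\Kdom)$. A secondary point that requires care is that \Cref{thm:trust-region-deriv-free-unconstr} requires $h$ to have bounded level sets; this is inherited from the assumption that $\ol{\dom}$ is a bounded closed set together with the coercivity contributed by the quadratic penalty terms, and I would record this briefly before applying the general theorem.
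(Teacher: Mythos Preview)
Your proposal is correct and follows essentially the same approach as the paper: both arguments instantiate \Cref{thm:trust-region-deriv-free-unconstr} with $h=\Phi(\cdot,\rho,\mu)$ and invoke \Cref{thm:(e,theta)-prob-loc-approx} to verify the $(\varepsilon,\theta)$-probabilistic local approximation accuracy and the fully-linear model property, then read off the supermartingale constant from the explicit $\ckeg,\ckef$ and the Lipschitz bound for $\nabla\Phi$. Your write-up is more explicit than the paper's in tracking the Lipschitz constant of $\Phi$ and in flagging the bounded-level-set hypothesis (which the paper leaves implicit via the boundedness needed to define $K_\dom$ in \eqref{def:C}), but there is no substantive difference in strategy.
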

\begin{proof}
The proof is a straightforward application of \Cref{thm:trust-region-deriv-free-unconstr}
on the merit function $\Phi(\cdot,\rho,\mu)$ with fixed penalty and Lagrangian parameters $\rho$ and $\mu$.
By Theorem~\ref{thm:(e,theta)-prob-loc-approx}, 
the assumptions and sample size condition in the theorem guarantee that the local models 
in \Cref{alg:trust-region-merit-func} satisfy the
($\varepsilon$, $\theta$)-probabilistic local approximation accuracy (\Cref{def:local-approx-acc}),
which is required by \Cref{thm:trust-region-deriv-free-unconstr} as \eqref{eqn:tr-cond1} and
\eqref{eqn:tr-cond2}. 

Note that in this case,  $\ckeg \defined 8\Kdom|\I_0|\parenth{1+\sum_{i\in\I_0}\mu_i+\rho}$ and
\newline
$\ckef \defined 6\Kdom|\I_0|\parenth{1+\sum_{i\in\I_0}\mu_i+\rho}$ from 
Theorem~\ref{thm:(e,theta)-prob-loc-approx}. The Lipschitz constant $L_\Phi$ of $\Phi$
can be bounded as
\bdm
\ba
L_\Phi &\le \left(1+\sum_{i\in\I_0}\mu_i+\rho\right)\max\{L_f,\; L_g,\; L_{g^2} \} \\
&\le \left(1+\sum_{i\in\I_0}\mu_i+\rho\right)\Kdom,
\ea
\edm
and hence the supermartingale result follows with $\Psi$ defined in \eqref{eqn:Psi_k}. 
\qed
\end{proof}

\Cref{thm:merit-convg} ensures the norm of the gradient of $\Phi$
converges to zero almost surely as the number of iterations goes to infinity.
However, we cannot run \Cref{alg:trust-region-merit-func}
without termination since the penalty parameters need to be adjusted in the outer iterations
and \Cref{alg:trust-region-merit-func} will be called repeatedly.
When the termination radius $r$ used in \Cref{alg:trust-region-merit-func}
is imposed, the algorithm will stop in finitely many iterations almost surely.  
In this case, the following theorem characterizes the quality of the solution.

\begin{theorem}\label{thm:P(|grad-Phi|)}
Suppose \Cref{ass:f-c2-diff,ass:c1-diff,ass:Q-diff,ass:F_twice_diff} hold.
Suppose \Cref{alg:trust-region-merit-func} is applied 
with fixed Lagrangian parameters $\rho,\mu$ and a termination radius $r$.
If the sample size $N^k$ satisfies \eqref{eqn:N_cond_kappa_alpha} at every iteration,
then the algorithm will terminate in finitely many iterations almost surely.
Let $x^{\tau}$ be the returned solution of the algorithm. 
Then
\begin{equation}\label{eqn:P(|grad-Phi|)}
\P\left(\|\nabla\Phi(x^{\tau},\rho,\mu)\|\le\delta\right)\ge 1-\frac{r}{\delta}\Psi_0(\rho,\mu,K_\dom) \qquad\forall \delta>0,
\end{equation}
where $\Psi_0(\rho,\mu,K_\dom)$ is defined in \eqref{eqn:Psi_k}
with $k=0$, which depends on  $\rho$, $\mu_i\;i\in\I_0$, $K_\dom$,
all the Lipschitz constants and internal parameters of \Cref{alg:trust-region-merit-func}.
\end{theorem}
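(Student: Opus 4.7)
The plan is to combine the two conclusions of \Cref{thm:merit-convg}---the almost-sure summability $\sum_k \Delta_k^2 < \infty$ and the supermartingale property of $\{\Psi_k\}$---with a one-line Markov argument at the (random) termination iteration. Almost-sure finite termination is immediate: since $\sum_{k=1}^\infty \Delta_k^2 < \infty$ almost surely, $\Delta_k \to 0$ almost surely, so the test $\Delta_{k+1} \le r$ is eventually triggered. Let $\tau$ denote the iteration index at which \Cref{alg:trust-region-merit-func} returns; this is an almost-surely finite stopping time with respect to the natural filtration $\{\mc{F}_k\}$.

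Next I would relate $\Delta_\tau$ to $r$. By construction $\Delta_{\tau+1} \le r$, and since the update rule gives $\Delta_{\tau+1} \in \{\inrate \Delta_\tau,\, \derate \Delta_\tau\}$ with $\derate < 1 < \inrate$, one has $\Delta_\tau \le r/\derate$ in all cases (the constant $1/\derate$ is absorbed into the final constant; equivalently one may redefine $\tau \defined \inf\{k : \Delta_k \le r\}$, giving $\Delta_\tau \le r$ directly). By \Cref{thm:merit-convg}, $\{\Psi_k\}$ is a nonnegative supermartingale with respect to $\{\mc{F}_k\}$, so Doob's optional stopping theorem for nonnegative supermartingales at an almost-surely finite stopping time yields
\[
\E[\Psi_\tau] \le \Psi_0(\rho,\mu,K_\dom).
\]
From the definition of $\Psi_\tau$ in \eqref{eqn:Psi_k},
\[
\Psi_\tau \;\ge\; \frac{\|\nabla \Phi(x^\tau,\rho,\mu)\|}{\Delta_\tau} \;\ge\; \frac{\|\nabla \Phi(x^\tau,\rho,\mu)\|}{r},
\]
so the event $\{\|\nabla \Phi(x^\tau,\rho,\mu)\| \ge \delta\}$ is contained in $\{\Psi_\tau \ge \delta/r\}$. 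Applying Markov's inequality to the nonnegative random variable $\Psi_\tau$ gives
\[
\P\!\left(\|\nabla \Phi(x^\tau,\rho,\mu)\| \ge \delta\right) \;\le\; \P(\Psi_\tau \ge \delta/r) \;\le\; \frac{r\,\E[\Psi_\tau]}{\delta} \;\le\; \frac{r\,\Psi_0(\rho,\mu,K_\dom)}{\delta},
\]
and taking complements yields \eqref{eqn:P(|grad-Phi|)}.

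The main technical point is to justify Doob's optional stopping theorem at a possibly unbounded (but almost-surely finite) stopping time. This is standard for \emph{nonnegative} supermartingales: one applies the bounded optional stopping theorem to the truncated stopping time $\tau \wedge n$ to obtain $\E[\Psi_{\tau \wedge n}] \le \Psi_0$, and then Fatou's lemma combined with the almost-sure finiteness of $\tau$ gives $\E[\Psi_\tau] \le \liminf_n \E[\Psi_{\tau \wedge n}] \le \Psi_0$. The remaining bookkeeping---the measurability of $\tau$ with respect to $\{\mc{F}_k\}$ and the exact constant relating $\Delta_\tau$ to $r$---follows directly from the update rule of \Cref{alg:trust-region-merit-func} and contributes only a constant factor that can be absorbed in $\Psi_0(\rho,\mu,K_\dom)$.
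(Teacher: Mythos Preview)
Your proposal is correct and follows essentially the same route as the paper: almost-sure finite termination from $\sum_k \Delta_k^2 < \infty$, optional stopping for the nonnegative supermartingale $\{\Psi_k\}$, the bound $\Delta_\tau \le r$ (via your alternative definition $\tau = \inf\{k:\Delta_k\le r\}$, which is exactly what the paper uses), and Markov's inequality. Your explicit justification of optional stopping at an a.s.\ finite time via truncation and Fatou is in fact more careful than the paper, which simply invokes the optional stopping theorem.
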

\begin{proof}
We prove by contradiction. Suppose the algorithm does not terminate.
Given that \Cref{ass:f-c2-diff,ass:c1-diff,ass:Q-diff,ass:F_twice_diff} hold,
by Theorem~\ref{thm:(e,theta)-prob-loc-approx}(b), \Cref{alg:trust-region-merit-func} 
satisfies the $(\varepsilon,\theta)$-probabilistic local approximation accuracy condition.
Then by \Cref{thm:merit-convg}, we have 
$\sum^{\infty}_{k=1}\Delta^2_k<\infty$ almost surely.
It follows that there exists a first iteration $\tau$ such that $\Delta_\tau<r$ 
almost surely, which meets the termination criterion. 
Therefore, the algorithm terminates in finitely many iteration almost surely.
Let $\Psi_k \defined \max\left\{\frac{\|\nabla\Phi(x^k,\rho,\mu)\|}{\Delta_k},
\mc{L}(\rho,\mu,K_\dom)\right\}$,
which is a super-martingale according to \Cref{thm:merit-convg}.
The super-martingale property implies that $\E[\Psi_k]\le\Psi_0$. 
Therefore, it follows that
\bdm
\E\left[\frac{\|\nabla\Phi(x^k,\rho,\mu)\|}{\Delta_k}\right]\le\Psi_0\quad\forall k.
\edm
The termination criterion and the definition of $\tau$ implies
\bdm
\Delta_k>r \quad\forall k<\tau,\quad\textrm{and}\quad
r\ge\Delta_{\tau}=\derate\Delta_{\tau-1}\ge \derate r.
\edm
Using the optional stopping theorem \cite{durrett-probability-theory} with respect to $\tau$, we have 
\bdm
\E\left[\frac{\|\nabla\Phi(x^{\tau},\rho,\mu)\|}{r}\right]
\le\E\left[\frac{\|\nabla\Phi(x^{\tau},\rho,\mu)\|}{\Delta_{\tau}}\right]
\le \Psi_0,
\edm
which implies that
\bdm
\E[\|\nabla\Phi(x^{\tau},\rho,\mu)\|]\le r\Psi_0.
\edm
Using the Markov inequality, we have
\bdm
\ba
&\P\left(\|\nabla\Phi(x^{\tau},\rho,\mu)\|\le\delta\right)=1-\P\left(\|\nabla\Phi(x^{\tau},\rho,\mu)\|>\delta\right) \\
&\ge 1-\frac{\E[\|\nabla\Phi(x^{\tau},\rho,\mu)\|]}{\delta}\ge 1-\frac{r}{\delta}\Psi_0,
\ea
\edm
which concludes the proof.
\qed
\end{proof}

\subsection{Almost surely convergence}
\label{sec:global-convg}
We now analyze the convergence of \Cref{alg:penalty}.
We first present a technical lemma 
that will be frequently used in the remainder of the manuscript.
\Cref{lem:as-converg1} summarizes~\cite[Lemma~3.4]{loeve1951-almost-sure-convg},
which is implied by the Borel–Cantelli lemma and the Markov inequality.
\begin{lemma}\label{lem:as-converg1}
Let $X$ be a random variable and $\{X_k\}^{\infty}_{k=1}$ be a sequence of random variables. 
If for some $r>0$, $\sum^\infty_{k=1} \E[|X_k-X|^r]<\infty$, then $X_k\to X$ a.s.
\end{lemma}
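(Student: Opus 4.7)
The plan is to reduce the almost sure convergence claim to an application of the first Borel--Cantelli lemma, with Markov's inequality serving as the bridge that turns the $L^r$-summability hypothesis into summability of tail probabilities. Concretely, fix $\varepsilon>0$. I would first observe that
\[
\P(|X_k-X|>\varepsilon)=\P(|X_k-X|^r>\varepsilon^r)\le \frac{\E[|X_k-X|^r]}{\varepsilon^r}
\]
by Markov's inequality applied to the nonnegative random variable $|X_k-X|^r$.

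Summing over $k$ and using the hypothesis $\sum_{k=1}^\infty \E[|X_k-X|^r]<\infty$ gives
\[
\sum_{k=1}^\infty \P(|X_k-X|>\varepsilon)\le \frac{1}{\varepsilon^r}\sum_{k=1}^\infty \E[|X_k-X|^r]<\infty.
\]
The Borel--Cantelli lemma then yields $\P(|X_k-X|>\varepsilon \text{ i.o.})=0$, equivalently $\P(\limsup_k \{|X_k-X|>\varepsilon\})=0$.

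To conclude almost sure convergence, I would then let $\varepsilon$ range over a countable sequence $\varepsilon_m\downarrow 0$ (for instance $\varepsilon_m=1/m$). The countable subadditivity of $\P$ gives
\[
\P\!\left(\bigcup_{m=1}^\infty \limsup_k\{|X_k-X|>\varepsilon_m\}\right)=0,
\]
and on the complement of this null set we have $\limsup_k |X_k-X|\le \varepsilon_m$ for every $m$, hence $\limsup_k |X_k-X|=0$, i.e. $X_k\to X$ pointwise. This establishes $X_k\toalmostsurely X$.

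There is no real obstacle here: the result is standard and both Markov's inequality and Borel--Cantelli apply directly under the stated hypothesis. The only minor care needed is the passage from ``for every $\varepsilon>0$, a certain event is null'' to ``a single almost sure limit holds,'' which requires taking the countable union over a sequence $\varepsilon_m\downarrow 0$ so that one does not attempt an uncountable union of null sets.
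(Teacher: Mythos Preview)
Your proof is correct and follows exactly the approach the paper indicates: the paper does not give a detailed proof of this lemma but simply states that it ``is implied by the Borel--Cantelli lemma and the Markov inequality,'' which is precisely the argument you carry out.
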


The following lemma will be used to prove the almost sure convergence of the 
merit-function gradient later in \Cref{lem:|gradPhi|-to-0}.
\begin{lemma}\label{lem:as-converg2}
Let $\{X_m\}^{\infty}_{m=1}$ be a sequence of random variables and let \linebreak[4]
$\mc{F}_m=\sigma(\{X_i\}^m_{i=1})$ be the natural
filtration. Let $\{\delta_m\}^{\infty}_{m=1}$ and 
$\{\varepsilon_m\}^{\infty}_{m=1}$ be sequences of positive real numbers
that converge to zero. If $\E[X^2_m|\mc{F}_{m-1}]$
is almost surely bounded uniformly by a constant and there exists $\epsilon >0$
such that $\delta_m\le O(1/m^{1+\epsilon})$,
$\varepsilon_m\le O(1/m^{2+\epsilon})$, 
and $\P(|X_{m+1}|>\delta_{m+1}|\mc{F}_m)\le\varepsilon_{m+1}$ a.s.~for every $m$,
then $X_m\to 0$ a.s.
\end{lemma}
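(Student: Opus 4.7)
The plan is to reduce the claim to \Cref{lem:as-converg1}, applied with $r=1$ and limit $X=0$, by establishing $\sum_{m=1}^{\infty}\E[|X_m|]<\infty$. To bound $\E[|X_m|]$, I would condition on $\mc{F}_{m-1}$ and split according to whether $|X_m|$ exceeds the threshold $\delta_m$:
\[
\E[|X_m|\,|\,\mc{F}_{m-1}]\le\delta_m + \E\!\left[|X_m|\,\mathbbm{1}_{\{|X_m|>\delta_m\}}\,\big|\,\mc{F}_{m-1}\right].
\]
Applying the conditional Cauchy--Schwarz inequality to the second term bounds it by $\sqrt{\E[X_m^2|\mc{F}_{m-1}]}\cdot\sqrt{\P(|X_m|>\delta_m|\mc{F}_{m-1})}$, which by the uniform second-moment hypothesis (denote the constant by $C$) and the tail hypothesis is at most $\sqrt{C\,\varepsilon_m}$ almost surely. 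Taking expectations then yields
\[
\E[|X_m|]\le\delta_m+\sqrt{C\,\varepsilon_m}.
\]

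Next I would verify summability using the polynomial decay assumptions. The bound $\delta_m=O(m^{-(1+\epsilon)})$ gives $\sum_m\delta_m<\infty$ since $1+\epsilon>1$, while $\varepsilon_m=O(m^{-(2+\epsilon)})$ gives $\sqrt{\varepsilon_m}=O(m^{-(1+\epsilon/2)})$, so that $\sum_m\sqrt{\varepsilon_m}<\infty$ as well. Hence $\sum_m\E[|X_m|]<\infty$, and \Cref{lem:as-converg1} immediately yields $X_m\to0$ almost surely.

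There is no real obstacle once Cauchy--Schwarz is combined with the polynomial rates; the argument is essentially a tail-splitting calculation. The only point worth flagging is that the hypothesized decay $\varepsilon_m=O(m^{-(2+\epsilon)})$ is precisely tuned so that $\sum_m\sqrt{\varepsilon_m}$ still converges. This exponent reflects the natural price paid for having only second-moment control (and thus taking the square root of the tail probability); if higher-order moments were available, the tail hypothesis on $\varepsilon_m$ could be correspondingly weakened, whereas a direct Markov/Borel--Cantelli route bypassing Cauchy--Schwarz would not use the second-moment bound at all.
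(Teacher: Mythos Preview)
Your proposal is correct and follows essentially the same approach as the paper: reduce to \Cref{lem:as-converg1} with $r=1$, split $\E[|X_m|\mid\mc{F}_{m-1}]$ on the event $\{|X_m|>\delta_m\}$, apply conditional Cauchy--Schwarz together with the second-moment bound, and then sum using the polynomial decay rates. The paper's proof is identical in structure and in the key estimate $\E[|X_m|\mid\mc{F}_{m-1}]\le O(\sqrt{\varepsilon_m}+\delta_m)$.
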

\begin{proof}
 We first apply \Cref{lem:as-converg1} to $\{X_m\}^\infty_{m=1}$
with $r=1$, $X_0=0$. So it suffices to verify $\sum^\infty_{m=1}\E[|X_m|]<\infty$.
The value $\E[|X_m|]=\E[\E[|X_m||\mc{F}_{m-1}]]$ can be bounded:
\beq
\begin{aligned}
&\E[|X_m||\mc{F}_{m-1}] = \E[|X_m|\bs{1}_{\{|X_m|>\delta_m\}}|\mc{F}_{m-1}]
+\E[|X_m|\bs{1}_{\{|X_m|\le\delta_m\}}|\mc{F}_{m-1}] \\
&\le \sqrt{\E[|X_m|^2|\mc{F}_{m-1}]\cdot\P(|X_m|>\delta_m|\mc{F}_{m-1})} 
+ \delta_m\le O(\sqrt{\varepsilon_m}+\delta_m) \quad a.s..
\end{aligned}
\eeq
Since $\varepsilon_m\le O(1/m^{2+\epsilon})$ and $\delta_m\le O(1/m^{1+\epsilon})$,
we have $\E[|X_m||\mc{F}_{m-1}]\le O(\sum_m 1/m^{1+\epsilon/2}
+\sum_m 1/m^{1+\epsilon})<\infty$ and the result is shown.
\qed
\end{proof}

\begin{lemma}\label{lem:inf-joint-event-convg-zero}
Let $\{\mc{F}_m\}^\infty_{m=1}$ be a filtration and $\{E_m\}^\infty_{m=1}$ be a sequence
of events such that $E_k$ is $F_m$-measurable for all $k<m$. 
Suppose $\P(E_m|\mc{F}_m)\le\delta_m$ for all $m$. Then 
\bdm
\P(\cap^m_{i=k}E_i) \le \prod^m_{i=k}\delta_i.
\edm
\end{lemma}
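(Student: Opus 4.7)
The plan is to prove the inequality by induction on $m$, with the starting index $k$ held fixed. The driving idea is that conditional on $\mc{F}_m$, the events $E_k,\dots,E_{m-1}$ are already determined (they are $\mc{F}_m$-measurable by hypothesis), so the conditional probability of $E_m$ factors out cleanly via the tower property.

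For the base case $m=k$, I would observe that taking expectations in the hypothesis $\P(E_k\mid\mc{F}_k)\le\delta_k$ yields $\P(E_k)\le\delta_k$, matching the right-hand side $\prod^k_{i=k}\delta_i$. For the inductive step, assuming $\P(\cap^{m-1}_{i=k}E_i)\le\prod^{m-1}_{i=k}\delta_i$, I would write
\[
\P\!\left(\cap^m_{i=k}E_i\right)
=\E\!\left[\bs{1}_{\cap^{m-1}_{i=k}E_i}\,\bs{1}_{E_m}\right]
=\E\!\left[\E\!\left[\bs{1}_{\cap^{m-1}_{i=k}E_i}\,\bs{1}_{E_m}\,\big|\,\mc{F}_m\right]\right].
\]
Because each $E_i$ with $i<m$ is $\mc{F}_m$-measurable, the indicator $\bs{1}_{\cap^{m-1}_{i=k}E_i}$ is $\mc{F}_m$-measurable and can be pulled out of the inner conditional expectation. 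Applying the hypothesis $\P(E_m\mid\mc{F}_m)\le\delta_m$ then gives
\[
\P\!\left(\cap^m_{i=k}E_i\right)
=\E\!\left[\bs{1}_{\cap^{m-1}_{i=k}E_i}\,\P(E_m\mid\mc{F}_m)\right]
\le\delta_m\,\P\!\left(\cap^{m-1}_{i=k}E_i\right),
\]
and the inductive hypothesis closes the argument.

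There is no real obstacle here: the only subtlety is being explicit about the measurability assumption, which is exactly what allows the indicator of the intersection of earlier events to be treated as a constant under $\E[\,\cdot\mid\mc{F}_m]$. The argument is essentially a repeated application of the tower property together with the bound on the conditional probability of $E_m$.
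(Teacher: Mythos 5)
Your proof is correct and follows essentially the same route as the paper's: condition on $\mc{F}_m$, use the $\mc{F}_m$-measurability of the earlier events to pull their indicators out of the inner conditional expectation, bound $\P(E_m\mid\mc{F}_m)$ by $\delta_m$, and recurse. You merely make explicit the base case and induction structure that the paper compresses into the phrase ``using induction.''
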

\begin{proof}
Let $\bs{1}_{E_i}$ be the indicator random variable of $E_i$. It follows that
\bdm
\ba
\P(\cap^m_{i=k}E_i)&=\E\left[\prod^m_{i=k}\bs{1}_{E_i}\right] 
=\E\left[\E\left[\prod^m_{i=k}\bs{1}_{E_i}\Bigg\vert \mc{F}_{m} \right]\right] \\
&=\E\left[\E\left[\bs{1}_{E_m}\vert \mc{F}_{m} \right]\cdot \prod^{m-1}_{i=k}\bs{1}_{E_i} \right] \\
&=\E\left[\P(E_m\vert \mc{F}_{m})\cdot \prod^{m-1}_{i=k}\bs{1}_{E_i} \right] \\
&\le\delta_m\E\left[ \prod^{m-1}_{i=k}\bs{1}_{E_i} \right].
\ea
\edm
Using induction, we can finally get
\bdm
\P(\cap^m_{i=k}E_i) \le \prod^m_{i=k}\delta_i. \tag*{\qed}
\edm
\qed
\end{proof}

We can now show that the gradient of the merit function converges to zero.
\begin{lemma}\label{lem:|gradPhi|-to-0}
Suppose \Cref{ass:f-c2-diff,ass:c1-diff,ass:Q-diff,ass:F_twice_diff} hold. 
Suppose \Cref{alg:penalty} is applied to \eqref{opt:NLP} to generate the sequences 
$\{x^{m}\}^{\infty}_{m=1}$ and $\{\rho^{m},\mu^{m},\bar{\mu}^m\}^{\infty}_{m=1}$.
Suppose the sample size in \Cref{alg:trust-region-merit-func} satisfies 
\eqref{eqn:N_cond_kappa_alpha} in each iteration.
Suppose the termination trust-region radius $r^{m}$ in 
\Cref{alg:penalty} satisfies $r^{m} \le{o}\left(\frac{1}{\Psi^{m}}\right)$
for some $\sigma>0$, where $\Psi^{m}=\Psi_0(\rho^m,\bar{\mu}^m,K_\dom)$ is defined in \eqref{eqn:Psi_k} with $k=0$.
Then for any subsequence $\mc{M}$, almost surely there exists a subsequence  
$\mc{M}^\prime\subseteq\mc{M}$ such that 
\beq
\lim_{m\in\mc{M}^\prime}\|\nabla\Phi(x^{m},\rho^{m},\bar{\mu}^{m})\|=0.
\eeq
\end{lemma}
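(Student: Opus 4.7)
The key observation is that Theorem~\ref{thm:P(|grad-Phi|)} already does most of the work at each outer iteration: conditional on the information $\mc{F}_{m-1}$ available at the start of the $m$-th call to \Cref{alg:trust-region-merit-func}, the parameters $\rho^m$, $\bar{\mu}^m$, $r^m$, and hence $\Psi^m = \Psi_0(\rho^m,\bar{\mu}^m,K_\dom)$ are all deterministic. Therefore I can instantiate Theorem~\ref{thm:P(|grad-Phi|)} with these conditional values to obtain
\[
\P\!\left(\|\nabla\Phi(x^m,\rho^m,\bar{\mu}^m)\|>\delta\;\Big|\;\mc{F}_{m-1}\right)\le \frac{r^m\Psi^m}{\delta}
\]
for any $\delta>0$. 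The first step is to verify carefully that $\rho^m,\bar{\mu}^m,r^m,\Psi^m$ are indeed $\mc{F}_{m-1}$-measurable (the former two are updated at the end of iteration $m-1$ via \eqref{eqn:params-update}, and $\Psi^m$ depends on $x^{m-1}$, $\rho^m$, $\bar{\mu}^m$, $K_\dom$ and algorithmic constants only), so that this application of Theorem~\ref{thm:P(|grad-Phi|)} is legitimate.

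Next, I exploit the hypothesis $r^m\le O(1/(\Psi^m m^{3+\sigma}))$, which is the precise quantitative slackness built into the termination radius to absorb a potentially unbounded $\Psi^m$. Picking a deterministic schedule $\delta_m \defined m^{-(1+\sigma/2)}$, the bound above yields
\[
\P\!\left(\|\nabla\Phi(x^m,\rho^m,\bar{\mu}^m)\|>\delta_m\;\Big|\;\mc{F}_{m-1}\right)\le \frac{C}{m^{2+\sigma/2}}\quad\text{a.s.,}
\]
and after integrating out, the unconditional probabilities are summable. A direct application of the Borel--Cantelli lemma then shows that the event $\{\|\nabla\Phi(x^m,\rho^m,\bar{\mu}^m)\|>\delta_m\}$ happens only finitely often almost surely. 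Since $\delta_m\to 0$, this gives $\|\nabla\Phi(x^m,\rho^m,\bar{\mu}^m)\|\to 0$ a.s.~for the \emph{entire} sequence, which is strictly stronger than the stated subsequence conclusion; for any $\mc{M}$, one may simply take $\mc{M}'=\mc{M}$.

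The main obstacle is the first step: being careful about what $\mc{F}_{m-1}$ contains and how Theorem~\ref{thm:P(|grad-Phi|)} ports into the conditional setting. In particular, the theorem was proved under the assumption that the Lagrange multipliers and penalty parameter are fixed scalars, whereas here $\rho^m$ and $\bar{\mu}^m$ are random. One must argue that conditioning on $\mc{F}_{m-1}$ freezes these random quantities to values for which Theorem~\ref{thm:P(|grad-Phi|)} applies verbatim, and that the resulting bound $r^m\Psi^m/\delta$ is itself a random (but $\mc{F}_{m-1}$-measurable) upper bound; integrating this conditional inequality then produces the unconditional summable bound needed for Borel--Cantelli. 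A secondary, purely technical point is to rule out any measurability issue with the algorithmic stopping time $\tau$ inside Theorem~\ref{thm:P(|grad-Phi|)}, which was already handled there. Note that I do not need \Cref{lem:as-converg2} or \Cref{lem:inf-joint-event-convg-zero}, since uniform boundedness of $\E[\|\nabla\Phi\|^2\mid\mc{F}_{m-1}]$ is not immediate when $\rho^m$ may diverge; the plain Borel--Cantelli route avoids this pitfall.
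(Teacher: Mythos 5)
Your proposal is correct and follows the paper's proof almost step for step in its core: you invoke \Cref{thm:P(|grad-Phi|)} conditionally on $\mc{F}_{m-1}$, choose the same deterministic schedule $\delta_m=O(m^{-(1+\sigma/2)})$, and use the hypothesis $r^m\le O(1/(\Psi^m m^{3+\sigma}))$ to turn the random bound $r^m\Psi^m/\delta_m$ into the deterministic bound $O(m^{-(2+\sigma/2)})$. The only divergence is the final extraction step. The paper keeps the bound conditional, defines the good events $E_m$, and applies its \Cref{lem:inf-joint-event-convg-zero} to conclude that $\P(\cap_{i\ge k}E_i^c)=0$ for every $k$, hence infinitely many $E_m$ occur a.s., which yields only the stated subsequence $\mc{M}'$. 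You instead integrate out the conditional bound and apply the first Borel--Cantelli lemma to the bad events, concluding that they occur only finitely often a.s., which gives convergence of the \emph{entire} sequence $\|\nabla\Phi(x^m,\rho^m,\bar\mu^m)\|\to 0$ and so trivially implies the subsequence statement with $\mc{M}'=\mc{M}$. Both routes are valid given the summability of $m^{-(2+\sigma/2)}$; yours buys a strictly stronger conclusion with a more standard tool and dispenses with \Cref{lem:inf-joint-event-convg-zero} (and, as you correctly note, with \Cref{lem:as-converg2}, whose second-moment hypothesis would be awkward here since $\rho^m$ may diverge). The one point you should make explicit when writing this up is the same one the paper leaves implicit: that \Cref{thm:P(|grad-Phi|)}, proved for fixed $(\rho,\mu)$, ports to the conditional setting because $\rho^m,\bar\mu^m,r^m,\Psi^m$ are all $\mc{F}_{m-1}$-measurable; you flag this obstacle accurately, and your treatment of it is consistent with what the paper does.
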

\begin{proof}
By \Cref{thm:P(|grad-Phi|)},
\bdm
\P\left(\norm{\nabla\Phi(x^{m},\rho^{m},\bar{\mu}^{m})}>
\delta^{m}\biggr\rvert \mc{F}_{m-1}\right)\le\frac{r^m}{\delta^m}\Psi^m
\edm
holds for all
$\delta^m>0$
for every iteration $m\in\mc{M}$. 
Since $r^{m} \le{o}\left(\frac{1}{\Psi^{m}}\right)$, let $r^m=\frac{a_m}{\Psi^m}$
for some $a_m\to0$.
Set $\delta^m=\sqrt{a_m}$ and substitute the expression of $r^m$ and $\delta^m$ into 
the above inequality, we obtain 
\beq\label{eqn:P<1/n^2+sigma}
\P\left(\norm{\nabla\Phi(x^{m},\rho^{m},\bar{\mu}^{m})}>
\sqrt{a_m}\bigg\rvert \mc{F}_{m-1}\right)\le\sqrt{a_m}.
\eeq
Let $E_{m}$ be the event:
\bdm
\norm{\nabla\Phi(x^{m},\rho^{m},\bar{\mu}^{m})}\le\sqrt{a_m}.
\edm
The probability inequality \eqref{eqn:P<1/n^2+sigma} together with 
\Cref{lem:inf-joint-event-convg-zero} further imply that almost surely 
there exist an infinite number of events from the sequence $\{E_m:m\in\mc{M}\}$ that happen. 
Let $\{E_m:m\in\mc{M}^\prime\}$ be such a subsequence of events from $\{E_m:m\in\mc{M}\}$
that happen. In this subsequence, we have
\bdm
\lim_{m\in\mc{M}^\prime}\norm{\nabla\Phi(x^{m},\rho^{m},\bar{\mu}^{m})}
=\lim_{m\in\mc{M}^\prime} \sqrt{a_m} = 0. \tag*{\qed}
\edm
\end{proof}

The following lemma provides conditions on the sample size $N\up{n}$ in each
iteration of \Cref{alg:penalty} to ensure the convergence of the quantile-constraint evaluation based on samples. 
\begin{lemma}\label{lem:N-for-gN-convg}
Suppose \Cref{ass:f-c2-diff,ass:c1-diff,ass:Q-diff,ass:F_twice_diff} hold. 
Let $\{x^{m}\}^{\infty}_{m=1}$  
be the points generated from \Cref{alg:penalty}.
Suppose the sample size in \Cref{alg:trust-region-merit-func} satisfies \eqref{eqn:N_cond_kappa_alpha} in each iteration.
If the sample size $N^m$ from Step~2 of \Cref{alg:penalty} satisfies the condition 
\beq\label{eqn:N-cond-as-converg}
N^m\ge O\left( \frac{C^2_\dom\alpha(1-\alpha)(\log\frac{1}{\gamma^{m}})^3}{d^2_\dom\cdot(\delta^{m})^2} \right),
\eeq
with $\delta^{m}\le O(1/m^{1+\sigma})$,
$\gamma^{m}\le O(1/m^{2+\sigma})$
holding for some $\sigma>0$,
then 
\beq
\lim_{m\to\infty}|g_{0,N^m}(x^m)-g_0(x^m)|=0\quad a.s.
\eeq
\end{lemma}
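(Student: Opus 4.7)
The plan is to reduce the claim to an application of \Cref{lem:as-converg2} by first establishing a one-step conditional tail bound on $|g_{0,N^m}(x^m)-g_0(x^m)|$ via \Cref{cor:P(|Q-QN|)}, and then checking the remaining moment hypothesis.

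Step 1 is to set $X_m \defined g_{0,N^m}(x^m)-g_0(x^m)$ and let $\mc{F}_m$ denote the natural filtration after the $m$-th outer iteration of \Cref{alg:penalty}. The iterate $x^m$ is returned by \Cref{alg:trust-region-merit-func} before Step 2 of \Cref{alg:penalty} draws the fresh batch of $N^m$ samples used to form $g_{0,N^m}(x^m)$, so $x^m$ is $\mc{F}_{m-1}$-measurable while these $N^m$ samples are independent of $\mc{F}_{m-1}$. Consequently \Cref{cor:P(|Q-QN|)} applies conditionally on $\mc{F}_{m-1}$, and the sample-size condition \eqref{eqn:N-cond-as-converg} is precisely what the corollary requires in order to conclude
\[
\P\bigl(|X_m|\ge \delta^{m} \,\big|\, \mc{F}_{m-1}\bigr) \le \gamma^{m}\quad\text{a.s.}
\]

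Step 2 is to verify the hypotheses of \Cref{lem:as-converg2} for the sequence $\{X_m\}$ with $\delta_m \defined \delta^{m}$ and $\varepsilon_m \defined \gamma^{m}$. The prescribed polynomial decay $\delta^{m}\le O(1/m^{1+\sigma})$ and $\gamma^{m}\le O(1/m^{2+\sigma})$ matches the rate requirements of that lemma. What remains is an almost-sure uniform bound $\E[X_m^2 \mid \mc{F}_{m-1}] \le B$. Since $x^m\in\ol{\dom}$ and $Q^{1-\alpha}(\cdot)$ is continuous on the bounded set $\ol{\dom}$ by \Cref{ass:Q-diff}, the true quantile $g_0(x^m)$ is uniformly bounded. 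The empirical quantile $g_{0,N^m}(x^m)$ is an order statistic at a fixed level $1-\alpha$ bounded away from $0$ and $1$, and the density regularity in \Cref{ass:c1-diff,ass:F_twice_diff} together with \Cref{thm:quant-proc} provides Gaussian-like tail decay around $g_0(x^m)$, which yields a uniform conditional second-moment bound.

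Step 3 is then to invoke \Cref{lem:as-converg2} directly and conclude $X_m\to 0$ almost surely, which is the claim.

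The main obstacle is the uniform second-moment bound in Step 2. The conditional high-probability bound controls $|X_m|$ only on an event of probability at least $1-\gamma^m$, so the pointwise tail contribution of $X_m$ on the complementary event must be shown to remain bounded in $L^2$ uniformly in $m$. In the case of bounded $c_1(\cdot,\xi)$ this is immediate; in the general case one leans on the concentration provided by \Cref{thm:quant-proc} (with constants absorbed into $C_\dom$ and $d_\dom$ over $\ol{\dom}$) to extract exponential tail control of the empirical quantile around $g_0(x^m)$, from which the uniform second-moment bound follows.
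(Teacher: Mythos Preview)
Your proposal is correct and follows essentially the same route as the paper: define $X_m$ as the quantile estimation error, obtain the conditional tail bound $\P(|X_m|>\delta^m\mid\mc{F}_{m-1})\le\gamma^m$ from \Cref{cor:P(|Q-QN|)} under the sample-size condition, verify the uniform bound on $\E[X_m^2\mid\mc{F}_{m-1}]$, and conclude via \Cref{lem:as-converg2}. The only place the paper is more explicit is the second-moment step, where it inverts the complexity relation in \Cref{cor:P(|Q-QN|)} to get a tail of the form $\exp(-C\delta^{2/3}N^{1/3})$ and then integrates $\int_0^\infty \exp(-CN^{1/3}t^{1/3})\,dt<\infty$; this is exactly the ``exponential tail control'' you allude to in your final paragraph.
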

\begin{proof}
Define a sequence of random variables 
$$X_m \defined \abs{g_{0,N^{m}}(x^{m})-g_0(x^{m})}=\abs{\Q{N^{m}}{x^{m}}-Q^{1-\alpha}(x^{m})}.$$
We will apply \Cref{lem:as-converg2} to $\{X_m\}^\infty_{m=1}$
for the desired result. We first show that the quantity $\E[X^2_m|\mc{F}_{m-1}]$
is bounded. Note that $\E[X^2_m|\mc{F}_{m-1}]$ has the form 
\begin{displaymath}
\begin{aligned}
\E\big[|g_{0,N^{m}}(x^{m})-g_0(x^{m})|^2|\mc{F}_{m-1}\big]
=\E\big[|\Q{N^{m}}{x^{m}}-Q^{1-\alpha}(x^{m})|^2|\mc{F}_{m-1}\big].
\end{aligned}
\end{displaymath}
\Cref{cor:P(|Q-QN|)} (given \Cref{ass:c1-diff,ass:Q-diff,ass:F_twice_diff}) shows that if 
$N\ge O\left( \frac{C^2_\dom\alpha(1-\alpha)(\log\frac{1}{\gamma})^3}{d^2_\dom\delta^2} \right)$,
the following tail bound holds for all $x\in \dom$:
\bdm
\P\parenth{\abs{\Q{N}{x}-\Qx}^2\ge\delta^2}\le\gamma.
\edm
If we let $Y \defined \abs{\Q{N}{x}-\Qx}^2$, the above inequality 
is equivalent to $1-F_Y(\delta^2)\le\gamma$, where $F_Y$ represents the CDF 
of the random variable $Y$. 
The sample condition
$N\ge O\left( \frac{C^2_\dom\alpha(1-\alpha)(\log\frac{1}{\gamma})^3}{d^2_\dom\delta^2} \right)$ 
implies that $\gamma_0\le\exp(-C\delta^{2/3}N^{1/3})$ for some constant $C>0$.
It follows that for any $x\in\dom$:
\bdm
\ba
\E\big[|\Q{N}{x}-\Qx|^2\big]=\int^\infty_{0}[1-F_Y(t)]dt
\le\int^\infty_{0}\exp(-CN^{\frac{1}{3}}t^{\frac{1}{3}})dt<\infty.
\ea
\edm
The above inequality indicates that the expectation is uniformly bounded by a constant.
We conclude that $\E[X^2_m|\mc{F}_{m-1}]$ is bounded.
Then by \Cref{cor:P(|Q-QN|)}, because 
$N^{m}\ge O\left( \frac{C^2_\dom\alpha(1-\alpha)(\log\frac{1}{\gamma^{m}})^3}{d^2_\dom\cdot(\delta^{m})^2} \right)$,
it follows that 
\bdm
\P(|X_m|>\delta^{m} | \mc{F}_{m-1})\le\gamma^{m}.
\edm
Applying \Cref{lem:as-converg2} to $\{X_m\}^\infty_{m=1}$ concludes the proof.
\qed
\end{proof}

\Cref{lem:onE*converg} and \Cref{thm:convg} are probabilistic counterparts of  
Lemma~2.3 and Theorem~2.1 in \cite{birgin2012-bd-penalty-param-aug-lag-ineq-constr}, respectively.

\begin{lemma}\label{lem:onE*converg}
Suppose \Cref{ass:f-c2-diff,ass:c1-diff,ass:Q-diff,ass:F_twice_diff} hold and 
\Cref{alg:penalty} has been applied to solve \eqref{opt:NLP}.
Let $\{x^{m},\rho^{m},\mu^{m},\bar{\mu}^{m}\}^\infty_{m=1}$ be the (probabilistic) sequence 
generated from \Cref{alg:penalty}. Let $x^*$ be a point and let $E^*$ be the event
\beq
E^*=\{\omega\in\Omega: \{x^m(\omega)\}\textrm{ has a limit point }x^*\}.
\eeq
Let $J=\{i\in\I_0: g_i(x^*)<0\}$. 
Suppose the sample size in \Cref{alg:trust-region-merit-func} satisfies \eqref{eqn:N_cond_kappa_alpha} in each iteration,
the trust-region radius $r^{m}$ in Step 1 of  \Cref{alg:penalty} satisfies the condition in \Cref{lem:|gradPhi|-to-0}
and the number of samples $N^{m}$ in Step 2 of \Cref{alg:penalty} satisfies the condition in \Cref{lem:N-for-gN-convg} for all iterations.
Then the following properties hold:\newline
(1) Almost surely on $E^*$, there exists a subsequence $\mc{M}$ satisfying
\beq\label{eqn:lim_I0-J}
\lim_{m\in\mc{M}}\norm{\nabla{f(x^m)} + \sum_{i\in\I_0\setminus J}\mu^{m+1}_i\nabla{g_i(x^m)} }=0.
\eeq
(2) If $x^*$ is feasible and the MFCQ constraint qualification is satisfied by \eqref{opt:NLP} at $x^*$, 
then almost surely on $E^*$, the $\{\mu^{m+1}: m\in\mc{M}\}$ are bounded.
In this case, $x^*$ satisfies the KKT conditions, and if there is only one $\mu^*$
satisfying $\nabla{f(x^*)}+\sum_{i\in\I_0}\mu^*_i\nabla{g_i(x^*)}=0$ and $\mu^*{\ge}0$,
we have
\beq
\lim_{m\in\mc{M}}\mu^{m+1}=\mu^*\quad a.s.\textrm{ on }E^*.
\eeq
\end{lemma}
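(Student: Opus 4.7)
The plan is to adapt the classical Birgin--Martinez augmented Lagrangian convergence argument (Lemma~2.3 of~\cite{birgin2012-bd-penalty-param-aug-lag-ineq-constr}) to our stochastic setting, relying on \Cref{lem:|gradPhi|-to-0} for approximate stationarity of the merit function and on \Cref{lem:N-for-gN-convg} to control the sample-based quantile evaluation inside the multiplier update.

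For part~(1), I would work on the event $E^*$, pick a (random) subsequence $\mathcal{M}_0$ along which $x^m \to x^*$, and apply \Cref{lem:|gradPhi|-to-0} with $\mathcal{M}_0$ as the input subsequence to extract, almost surely, a further subsequence $\mathcal{M}_1 \subseteq \mathcal{M}_0$ on which $\norm{\nabla \Phi(x^m, \rho^m, \bar{\mu}^m)} \to 0$. Using \eqref{eqn:grad-Phi} this gradient can be written as
\[
  \nabla \Phi(x^m, \rho^m, \bar{\mu}^m) = \nabla f(x^m) + \sum_{i \in \I_0} \tilde{\mu}^{m+1}_i \nabla g_i(x^m),
  \qquad \tilde{\mu}^{m+1}_i := \max\{0, \bar{\mu}^m_i + \rho^m g_i(x^m)\}.
\]
For $i \in \I$, $\tilde{\mu}^{m+1}_i$ coincides with the algorithmic update $\mu^{m+1}_i$; for $i=0$ the two quantities differ by at most $\rho^m |g_0(x^m) - g_{0,N^m}(x^m)|$, which \Cref{lem:N-for-gN-convg} together with \eqref{eqn:N-cond-as-converg} shows is a.s.~negligible. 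I would then show $\mu^{m+1}_i \to 0$ for $i \in J$ by splitting on whether $\{\rho^m\}$ is bounded: if $\rho^m \to \infty$, then $\bar{\mu}^m_i \le \mumax$ and $\rho^m g_i(x^m) \to -\infty$ force $\mu^{m+1}_i = 0$ for $m$ large; if $\rho^m$ stabilizes at some $\rho_\infty$, the update rule in \Cref{lin:rho-update} forces $\sigma_{N^m}(x^m, \mu^{m+1}) \le \eta^m \to 0$, which combined with $g_i(x^*) < 0$ and \Cref{lem:N-for-gN-convg} forces $\mu^{m+1}_i \to 0$ via the definition of $\sigma_{N^m}$. Substituting back into the expansion of $\nabla \Phi$ yields \eqref{eqn:lim_I0-J} on a subsequence $\mathcal{M} \subseteq \mathcal{M}_1$.

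For part~(2), I would use the MFCQ assumption with the standard normalization argument. Supposing for contradiction that $\{\mu^{m+1}\}_{m \in \mathcal{M}}$ is unbounded, I would pass to a sub-subsequence with $\mu^{m+1}/\norm{\mu^{m+1}} \to \mu^\infty$, where $\mu^\infty \ge 0$, $\norm{\mu^\infty}=1$, and $\mu^\infty_i = 0$ for $i \in J$ (from part~(1) together with $\norm{\mu^{m+1}} \to \infty$). Dividing \eqref{eqn:lim_I0-J} by $\norm{\mu^{m+1}}$ and taking the limit gives $\sum_{i \in \I_0 \setminus J} \mu^\infty_i \nabla g_i(x^*) = 0$; pairing against the MFCQ direction $d$ produces $\mu^\infty = 0$, contradicting $\norm{\mu^\infty}=1$. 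With boundedness established, extract $\mu^{m+1} \to \mu^*$ along a subsequence, pass to the limit in \eqref{eqn:lim_I0-J}, and combine with feasibility of $x^*$ and $\mu^*\ge 0$ to obtain the KKT conditions; uniqueness of $\mu^*$ then upgrades subsequential to full convergence along $\mathcal{M}$.

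The main obstacle is controlling $\rho^m |g_0(x^m) - g_{0,N^m}(x^m)|$ when $\rho^m$ is unbounded, since \Cref{lem:N-for-gN-convg} gives a.s.~convergence of the factor $|g_0(x^m)-g_{0,N^m}(x^m)|$ without an explicit rate strong enough to absorb a geometrically growing $\rho^m$; resolving this requires tuning the sample-size schedule $\{\delta^m,\gamma^m\}$ jointly with $\{\rho^m\}$ so that $\rho^m \delta^m \to 0$ a.s., or else restricting the analysis in the unbounded-$\rho^m$ branch to exploit the explicit update $\mu^{m+1}_0 = 0$ for $m$ large. A secondary care point is that \Cref{lem:|gradPhi|-to-0} and \Cref{lem:N-for-gN-convg} each hold on a probability-one event, so the nested subsequence extractions must be carried out on the intersection to obtain a single a.s.~statement on $E^*$.
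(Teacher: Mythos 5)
Your proposal follows essentially the same route as the paper's proof: the same expansion of $\nabla\Phi(x^m,\rho^m,\bar{\mu}^m)$ into the multiplier-update form, the same case split on whether $\{\rho^m\}$ is bounded (using $\sigma_{N^m}(x^m,\mu^{m+1})\le\eta^m\to 0$ in the bounded case and the eventual negativity of $\bar{\mu}^m_i+\rho^m g_i(x^m)$ for $i\in J$ in the unbounded case), and the same normalization-plus-MFCQ contradiction argument for part~(2). The one point where you go beyond the paper is in flagging the mismatch between the coefficient $\max\{0,\bar{\mu}^m_0+\rho^m g_0(x^m)\}$ appearing in $\nabla\Phi$ and the algorithmic multiplier $\mu^{m+1}_0$ defined via $g_{0,N^m}(x^m)$ --- the paper silently identifies the two in \eqref{eqn:grad_Phi_expend}, so your concern about controlling $\rho^m\abs{g_0(x^m)-g_{0,N^m}(x^m)}$ when $\rho^m$ grows is a genuine subtlety that the paper's own proof does not address.
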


\begin{proof}

Part (1). By \Cref{lem:|gradPhi|-to-0}, almost surely on $E^*$ there exists a subsequence 
indexed by $\mc{M}$ (depending on the sample path $\omega$) such that 
\beq\label{eqn:limM0_norm_Phi}
\lim_{m\in\mc{M}}\norm{\nabla{\Phi(x^m,\rho^m,\bar{\mu}^m)}}=0.
\eeq
Since 
\beq\label{eqn:grad_Phi_expend}
\ba
\nabla{\Phi(x^m,\rho^m,\bar{\mu}^m)}&=\nabla{f(x^m)} + \sum_{i\in\I_0}\max\bigcurl{0,\bar{\mu}^m_i+\rho^mg_i(x^m)}\nabla{g_i(x^m)} \\
&=\nabla{f(x^m)} + \sum_{i\in\I_0}\mu^{m+1}_i\nabla{g_i(x^m)},
\ea
\eeq
\eqref{eqn:limM0_norm_Phi} can be equivalently written as 
\beq\label{eqn:limM0_norm_Phi_expend}
\lim_{m\in\mc{M}}\norm{\nabla{f(x^m)} + \sum_{i\in\I_0}\mu^{m+1}_i\nabla{g_i(x^m)} }=0.
\eeq
We split the discussion for two cases. \newline
Case 1. Suppose $\{\rho^m:m\in\mc{M}\}$ are bounded. Then by the condition in \Cref{lin:rho-update}
of \Cref{alg:penalty} for updating $\rho^m$, for large enough $m\in\mc{M}$ we should have
\bdm
\sigma_{N^m}(x^m,\mu^{m+1})\le\eta^m,
\edm
and hence
\bdm
\lim_{m\in\mc{M}}\sigma_{N^m}(x^m,\mu^{m+1})=0.
\edm
It implies the following limits hold:
\beq\label{eqn:lim_g_mu_1}
\ba
&\lim_{m\in\mc{M}}\min\bigcurl{-g_{0,N^m}(x^m),\mu^{m+1}_0}=0, \\
&\lim_{m\in\mc{M}}\min\bigcurl{-g_{i}(x^m),\mu^{m+1}_i}=0\quad\forall i\in\I.
\ea
\eeq
Since we also have 
\beq\label{eqn:lim_g}
\ba
&\lim_{m\to\infty}g_i(x^m)=g_i(x^*)\quad i\in\I, \\
&\lim_{m\to\infty}g_{0,N^m}(x^m)=g_0(x^*)\quad a.s.\textrm{ on $E^*$ by \Cref{lem:N-for-gN-convg}}.
\ea
\eeq 
Combining \eqref{eqn:lim_g_mu_1} and \eqref{eqn:lim_g} gives
\beq\label{eqn:lim_M_-g_mu}
\lim_{m\in\mc{M}}\min\bigcurl{-g_{i}(x^m),\mu^{m+1}_i}=\lim_{m\in\mc{M}}\min\bigcurl{-g_{i}(x^*),\mu^{m+1}_i}=0\quad\forall i\in\I_0.
\eeq
Since $g_i(x^*)<0$ for $i\in{J}$, the above limit implies that
\beq\label{eqn:lim_mu_J}
\lim_{m\in\mc{M}}\mu^{m+1}_i=0\quad a.s.\textrm{ on $E^*$}\;\forall i\in J.
\eeq
Substituting \eqref{eqn:lim_mu_J} into \eqref{eqn:limM0_norm_Phi_expend} shows that
\eqref{eqn:lim_I0-J} holds in Case 1. \newline
Case 2. $\{\rho^m:m\in\mc{M}\}$ is unbounded, i.e., $\rho^m\to\infty$.
Since $g_i(x^*)<0$ for $i\in{J}$ and $0\le\bar{\mu}^m_i\le\mumax$,
using \eqref{eqn:lim_g} shows that for all $m\in\mc{M}$ large enough we have
\bdm
\ba
&\bar{\mu}^m_i+\rho^mg_i(x^m)<0\quad i\in\I\cap{J},\\
&\bar{\mu}^m_0+\rho^mg_{0,N^m}(x^m)<0\quad a.s.\textrm{ on $E^*$}\quad\textrm{ if } 0\in{J}.
\ea
\edm
Therefore, $\lim_{m\in\mc{M}}\mu^{m+1}_i=0$ for $i\in{J}$, which shows
that \eqref{eqn:lim_I0-J} holds in Case 2. This concludes the proof of Part (1).

Part (2). Assume the sequence $\{\mu^{m+1}:m\in\mc{M}\}$ is not bounded.
Let $B_m=\norm{\mu^{m+1}}_\infty$. 
Then the sequence $\{\mu^{m+1}/B_m:m\in\mc{M}\}$
is bounded and hence there exists a subsequence $\mc{M}_1\subseteq\mc{M}$
such that 
\beq\label{eqn:lim_B_mu/B}
\lim_{m\in\mc{M}_1}B_m=\infty,\quad \lim_{m\in\mc{M}_1}\mu^{m+1}/B_m=\mu^\prime\textrm{ for some }\mu^\prime\ge{0}.
\eeq
Combining \eqref{eqn:lim_I0-J} and \eqref{eqn:lim_B_mu/B} gives
\beq\label{eqn:limit_norm=0}
\ba
0&=\lim_{m\in\mc{M}_1}\frac{1}{B_m}\norm{\nabla{f(x^m)} + \sum_{i\in\I_0\setminus{J}}\mu^{m+1}_i\nabla{g_i(x^m)} } \\
&=\norm{\sum_{i\in\I_0\setminus{J}}\mu^\prime_i\nabla{g_i(x^*)}}
\ea
\eeq
Since the MFCQ holds at $x^*$, there exists a vector $d$ such that $\inner{\nabla{g_i(x^*)}}{d}<0$
for all $i\in\I_0\setminus{J}$ (\Cref{def:constr-qualify}). Therefore,
\bdm
\ba
\inner{\sum_{i\in\I_0\setminus{J}}\mu^\prime_i\nabla{g_i(x^*)}}{d}
=\sum_{i\in\I_0\setminus{J}}\mu^\prime_i\inner{\nabla{g_i(x^*)}}{d}<0,
\ea
\edm
which contradicts \eqref{eqn:limit_norm=0}. It shows that $\{\mu^{m+1}: m\in\mc{M}\}$ are bounded.
Since $\{\mu^{m+1}: m\in\mc{M}\}$ are bounded and $x^*$ is feasible, any 
limit point $\mu^\prime$ of $\{\mu^{m+1}: m\in\mc{M}\}$ is a set of KKT multipliers 
associated with $x^*$, where the complementary slackness condition is ensured by
\eqref{eqn:lim_M_-g_mu}. If $\mu^\prime=\mu^*$ is unique, it follows that
$\{\mu^{m+1}: m\in\mc{M}\}$ must be a convergent sequence 
and $\lim_{m\in\mc{M}}\mu^{m+1}=\mu^*$. This concludes the proof of Part (2).
\qed
\end{proof}

The following theorem gives the properties of a limit point $x^*$ of the probabilistic sequence $\{x^m\}$
generated by \Cref{alg:penalty} in different cases.

\begin{theorem}\label{thm:convg}
Suppose \Cref{ass:f-c2-diff,ass:c1-diff,ass:Q-diff,ass:F_twice_diff} hold and 
\Cref{alg:penalty} has been applied to solve \eqref{opt:NLP}.
Let $\{x^{m},\rho^{m},\mu^{m},\bar{\mu}^{m}\}$ be the sequence 
generated from \Cref{alg:penalty}. Let $x^*$ be a point and
let $E^*$ be the event
\beq
E^*=\{\omega\in\Omega: \{x^m(\omega)\}\textrm{ has a limit point }x^*\}.
\eeq
Suppose the sample size in \Cref{alg:trust-region-merit-func} 
satisfies \eqref{eqn:N_cond_kappa_alpha} in each iteration,
the trust-region radius $r^m$ in Step 1 of  \Cref{alg:penalty} satisfies 
the condition in \Cref{lem:|gradPhi|-to-0} and the number of samples $N^m$ 
in Step 2 of \Cref{alg:penalty} satisfies the condition in \Cref{lem:N-for-gN-convg} for all iterations.
Then the following statements hold almost surely on $E^*$:
\newline
(1) If $\{\rho^m\}$ is bounded, $x^*$ is feasible.
\newline
(2) $x^*$ is a stationary point of the function $\norm{\max\{0,g(x)\}}^2$, 
where the $\max$ operator acts on every component of $g$.
\newline
(3) If $x^*$ is feasible and satisfies the CPLD constraint qualification of \eqref{opt:NLP},
then $x^*$ fulfills the KKT conditions of \eqref{opt:NLP}.
\end{theorem}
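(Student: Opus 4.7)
The plan is to follow the deterministic template from Theorem~2.1 of \cite{birgin2012-bd-penalty-param-aug-lag-ineq-constr} and reprove each of the three statements while substituting in the probabilistic ingredients established earlier (\Cref{lem:|gradPhi|-to-0}, \Cref{lem:N-for-gN-convg}, and \Cref{lem:onE*converg}). Throughout, I work on a fixed $\omega$ in the (almost sure) event on which both $g_{0,N^m}(x^m)\to g_0(x^*)$ along the convergent subsequence and $\norm{\nabla\Phi(x^m,\rho^m,\bar{\mu}^m)}\to 0$ along some subsequence $\mc{M}$ realizing $x^m\to x^*$.

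For Part (1), assume $\{\rho^m\}$ is bounded. Then \Cref{lin:rho-update} of \Cref{alg:penalty} can trigger only finitely often, so for all large $m$ we have $\sigma_{N^m}(x^m,\mu^{m+1})\le \eta^m\to 0$. This forces $\min\{-g_{0,N^m}(x^m),\mu^{m+1}_0\}\to 0$ and $\min\{-g_i(x^m),\mu^{m+1}_i\}\to 0$ for $i\in\I$. Combining with $g_i(x^m)\to g_i(x^*)$ (continuity) and $g_{0,N^m}(x^m)\to g_0(x^*)$ almost surely (\Cref{lem:N-for-gN-convg}), and using $\mu^{m+1}_i\ge 0$, the only way the minimum can tend to zero is if $-g_i(x^*)\ge 0$ for every $i\in\I_0$, i.e.\ $x^*$ is feasible.

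For Part (2), expand the gradient via \eqref{eqn:grad_Phi_expend}:
\[
\nabla\Phi(x^m,\rho^m,\bar{\mu}^m)=\nabla f(x^m)+\rho^m\sum_{i\in\I_0}\max\bigcurl{0,\tfrac{\bar{\mu}^m_i}{\rho^m}+g_i(x^m)}\nabla g_i(x^m).
\]
If $\rho^m$ is bounded, feasibility of $x^*$ (Part~1) makes $\max\{0,g_i(x^*)\}=0$, and stationarity of $\|\max\{0,g(x)\}\|^2$ is trivial. If $\rho^m\to\infty$, divide by $\rho^m$ in the subsequence $\mc{M}$ from \Cref{lem:|gradPhi|-to-0}; the $\nabla f(x^m)/\rho^m$ term vanishes, $\bar{\mu}^m_i/\rho^m\to 0$ (because $\bar{\mu}^m_i\le\mumax$), and continuity of $\nabla g_i$ (with the a.s.~closeness $g_{0,N^m}(x^m)\to g_0(x^*)$ used for the quantile summand) produces the limit
\[
\sum_{i\in\I_0}\max\bigcurl{0,g_i(x^*)}\nabla g_i(x^*)=0,
\]
which is exactly the first-order stationary condition for $\|\max\{0,g(\cdot)\}\|^2$.

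For Part (3), invoke \Cref{lem:onE*converg}(1) to obtain a subsequence $\mc{M}$ with $\nabla f(x^m)+\sum_{i\in\I_0\setminus J}\mu^{m+1}_i\nabla g_i(x^m)\to 0$, where $J=\{i:g_i(x^*)<0\}$ and $\mu^{m+1}_i\ge 0$. Following the CPLD-based argument of \cite{birgin2012-bd-penalty-param-aug-lag-ineq-constr}, apply Carathéodory's lemma to replace the representation by a subset $J_m\subseteq\I_0\setminus J$ whose gradients $\{\nabla g_i(x^m):i\in J_m\}$ are linearly independent and whose coefficients $\tilde{\mu}^{m+1}_i\ge 0$ give the same combination. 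Pass to a further subsequence on which $J_m$ equals a fixed $J^*$. Either $\{\tilde{\mu}^{m+1}_i\}$ is bounded, in which case any limit point $\mu^*$ yields the KKT relation (with complementary slackness on $J$ from \eqref{eqn:lim_M_-g_mu}), or it is unbounded; in the latter case, normalizing by $\|\tilde{\mu}^{m+1}\|_\infty$ and passing to the limit gives a nontrivial nonnegative combination $\sum_{i\in J^*}\mu'_i\nabla g_i(x^*)=0$ with $\{\nabla g_i(x^m):i\in J^*\}$ linearly independent for large $m\in\mc{M}$, contradicting CPLD at $x^*$.

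The main obstacle is the CPLD argument in Part (3): the MFCQ proof of \Cref{lem:onE*converg}(2) does not directly transfer because CPLD only guarantees local linear dependence propagation rather than the existence of a descent direction. The Carathéodory reduction and the contradiction-via-linear-independence step must be handled carefully in the stochastic setting, since the subsequences on which $J_m\equiv J^*$ and on which the probabilistic convergence of $g_{0,N^m}(x^m)$ holds must be intersected; this is available almost surely because both events hold on a full-measure set (from \Cref{lem:|gradPhi|-to-0,lem:N-for-gN-convg}) and the combinatorial extraction of $J_m\equiv J^*$ is sample-path deterministic once the realized sequence $\{x^m,\mu^{m+1}\}$ is fixed.
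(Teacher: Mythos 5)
Your proposal is correct and follows essentially the same route as the paper: Part (1) via $\sigma_{N^m}\to 0$ plus the a.s.\ convergence of $g_{0,N^m}(x^m)$, Part (2) by dividing the merit-function gradient by $\rho^m$ in the unbounded case, and Part (3) by the Carath\'eodory reduction to a linearly independent subset followed by the normalization/contradiction with CPLD. The only cosmetic difference is that you apply Carath\'eodory to the sum over $\I_0\setminus J$ (after invoking \Cref{lem:onE*converg}(1)) while the paper applies it to the full sum over $\I_0$; both lead to the same contradiction, and your closing remark about intersecting the combinatorial subsequence with the full-measure event is exactly the point the paper handles implicitly.
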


\begin{proof}
Part (1). Since $\{\rho^m\}$ is bounded, using the same argument as
in Case 1 of \Cref{lem:onE*converg} Part (1), we can obtain \eqref{eqn:lim_M_-g_mu},
which states that almost surely on $E^*$ there exists a subsequence indexed by $\mc{M}$ such that
\beq\label{eqn:comp_slack}
\lim_{m\in\mc{M}}\min\bigcurl{-g_{i}(x^*),\mu^{m+1}_i}=0\quad\forall i\in\I_0.
\eeq
Given that $\mu^{m+1}\ge{0}$ by the updating rule from \eqref{eqn:params-update}, the above limit implies that
\bdm
g_i(x^*)\le{0}\quad\forall{i\in\I_0},
\edm
which proves the feasibility of $x^*$.

Part (2). Let $F(x):=\norm{\max\{0,g(x)\}}^2=\sum_{i\in\I_0}|\max\{0,g_i(x)\}|^2$. The gradient of $F$
has the form
\beq
\nabla{F(x)}=\sum_{i\in\I_0}\max\{0,g_i(x)\}\nabla{g_i(x)}.
\eeq
If $x^*$ is feasible, we have $\max\{0,g_i(x^*)\}=0$ and hence $\nabla{F(x^*)}=0$.
Suppose $x^*$ is infeasible. By the result of Part (1), $\{\rho^m\}$ must be unbounded.
Using \eqref{eqn:grad_Phi_expend} and \eqref{eqn:grad_Phi_expend}, we conclude
that almost surely on $E^*$ there exist a subsequence $\mc{M}$ satisfying
\beq\label{eqn:grad_Phi_expend2}
\ba
0&=\lim_{m\in\mc{M}}\norm{\nabla{\Phi(x^m,\rho^m,\bar{\mu}^m)}} \\
&=\lim_{m\in\mc{M}}\norm{\nabla{f(x^m)} + \sum_{i\in\I_0}\max\bigcurl{0,\bar{\mu}^m_i+\rho^mg_i(x^m)}\nabla{g_i(x^m)}}.
\ea
\eeq
Divide both sides of \eqref{eqn:grad_Phi_expend2} by $\rho^m$ yields
\bdm
\lim_{m\in\mc{M}}\norm{\frac{1}{\rho^m}\nabla{f(x^m)} + \sum_{i\in\I_0}\max\bigcurl{0,\frac{\bar{\mu}^m_i}{\rho^m}+g_i(x^m)}\nabla{g_i(x^m)}}=0.
\edm
Since $x^m\to{x^*}$, $\bar{\mu}^m_i\le\mumax$ and $\{\rho^m\}$ is unbounded, the above limit gives
\bdm
\norm{\sum_{i\in\I_0}\max\{0,g_i(x^*)\}\nabla{g_i(x^*)}}=0.
\edm

Part (3). Let $\mc{M}$ be the index set of a subsequence satisfying $\lim_{m\in\mc{M}}x^m=x^*$
and \eqref{eqn:limM0_norm_Phi_expend}. 
We have that almost surely on $E^*$, 
\beq\label{eqn:lim_I0-J_recite}
\lim_{m\in\mc{M}}\norm{\nabla{f(x^m)} + \sum_{i\in\I_0}\mu^{m+1}_i\nabla{g_i(x^m)} }=0.
\eeq
Let 
\beq
v^m=\nabla{f(x^m)} + \sum_{i\in\I_0}\mu^{m+1}_i\nabla{g_i(x^m)}.
\eeq
Case 1. The subsequence $\left\{\norm{\mu^{m+1}}_\infty:m\in\mc{M}\right\}$ is bounded. 
Then there exists a convergent subsequence indexed by some $\mc{M}_1$ such that
$\lim_{m\in\mc{M}_1}\mu^{m+1}=\mu^\prime$ for some $\mu^\prime\ge{0}$. 
The limit \eqref{eqn:lim_I0-J_recite} implies that 
\bdm
\nabla{f(x^*)} + \sum_{i\in\I_0}\mu^\prime_i\nabla{g_i(x^*)}=0.
\edm
The above limit (for stationary condition) and \eqref{eqn:comp_slack} (for complementary slackness) point 
out that $x^*$ fulfills the KKT condition in Case 1. 
\newline
Case 2. The subsequence $\left\{\norm{\mu^{m+1}}_\infty:m\in\mc{M}\right\}$ is unbounded.
Define 
\bdm
G^m=\sum_{i\in\I_0}\mu^{m+1}_i\nabla{g_i(x^m)}.
\edm
Since $\norm{G^m}\le\norm{\nabla{f(x^m)}}+\norm{v^m}$, $\{G^m\}$ is bounded.
By Caratheodory's theorem, there exist a subset $\I^m\subset\I_0$ 
and a vector $\hat{\mu}^{m+1}\in\mR^{|\I^m|}_+$
such that $\{\nabla{g_i(x^m)}:i\in\I^m\}$ are linearly independent and
\beq\label{eqn:Gn=f+g}
G^m=\sum_{i\in\I^m}\hat{\mu}^{m+1}_i\nabla{g_i(x^m)}\quad\forall{m}\in\mc{M}.
\eeq
Taking an appropriate subsequence $\mc{M}_1$, we may assume that $\I^\prime=\I^m$ 
for large enough $m\in\mc{M}_1$, as because there are only finitely many subsets of $\I_0$. 
Let $B^m=\max\left\{1,\norm{\hat{\mu}^{m+1}}_\infty \right\}$. 
Divide \eqref{eqn:Gn=f+g} by $B_m$ gives that 
\beq\label{eqn:Gn/Bn}
\frac{G^m}{B^m}=\sum_{i\in\I^\prime}\frac{\hat{\mu}^{m+1}_i}{B^m}\nabla{g_i(x^m)}\quad\forall{m}\in\mc{M}.
\eeq
Since $\{\hat{\mu}^{m+1}/B^m:m\in\mc{M}\}$ is bounded, it has a convergent subsequence $\mc{M}_1$ such that
$\hat{\mu}^{m+1}/B^m\to\mu^\prime$ for some $\mu^\prime=1\ge{0}$ but $\norm{\mu^\prime}_\infty>0$.  
Taking the limit for $m\in\mc{M_1}$ on both sides of \eqref{eqn:Gn/Bn} and using 
$\norm{G^m/B^m}\to{0}$ leads to
\beq
\sum_{i\in\I^\prime}\mu^\prime_i\nabla{g_i(x^*)}=0.
\eeq
It shows that $\{\nabla{g_i(x^*)}:i\in\I^\prime\}$ are linearly dependent, and hence
due to the CPLD condition, $\{\nabla{g_i(x)}:i\in\I^\prime\}$ are linearly dependent for any $x$
in a neighborhood $U$ of $x^*$. But this contradicts \eqref{eqn:Gn=f+g}.
\qed
\end{proof}

\section{Preliminary Numerical Investigation}\label{sec:num}
\Cref{alg:trust-region-merit-func,alg:penalty} proposed in this work 
have been implemented in MATLAB and are available online~\cite{zenodo}.
We have tested the numerical performance of our methods on instances of
three benchmark CCPs: a nonconvex quantile optimization problem,
a portfolio optimization problem, and a joint chance-constrained optimization
problem.
\begin{example}[nonconvex1D \cite{wachter2020-chance-constr-prob-smooth-sample-nonlinear-approx}]
\label{exp:nonconvex_1d}
  The quantile optimization problem can be reformulated as a CCP:
\beq
\minimize_{x,y}\; y\;\; \text{s.t.: }\;\mb{P}[c(x,\xi)\le y]\ge 1-\alpha,
\eeq
where $c(x,\xi) \defined 0.25x^4-1/3x^3-x^2+0.2x-19.5+\xi_1x+\xi_2$ is a non-convex univariate 
function, with $\xi_1\sim N(0,3)$ and $\xi_2\sim N(0,144)$ as independent
random parameters. This problem is equivalent to minimizing the ($1-\alpha$)-quantile
of $c(x,\xi)$ over $x$.
\end{example}

\begin{example}[portfolio]\label{exp:portfolio}
Consider the portfolio optimization problem instance with a single individual linear chance constraint:
\beq
\maximize\; t \;\;\text{s.t.: } \mb{P}\{\xi^\top x\ge t\}\ge 1-\alpha,\; \sum^n_{i=1}x_i= 1,\; x_i\ge0,
\eeq
where $x_i$ denotes the fraction of investment in stock $i\in\{1,\ldots,n\}$,
$\xi_i\sim\mc{N}(\mu_i,\sigma^2_i)$
is a normally distributed random variable of return
with $\mu_i \defined 1.05+0.3\frac{n-i}{n-1}$ 
and $\sigma_i \defined \frac{1}{3}\left(0.05+0.6\frac{n-i}{n-1}\right)$.
Because the quantity $\xi^\top x$ is also a Gaussian random
variable with the mean and standard deviation being analytical functions
of $x$, the above problem can be reformulated as the second-order-cone
program for the case $\alpha<0.5$:
\begin{equation}\label{eqn:portfolio-convex-reform}
\begin{aligned}
&\maximize\;\; \sum^n_{i=1}\mu_ix_i + q^\alpha \sqrt{\sum^n_{i=1}\sigma^2_ix^2_i} 
\;\;\textrm{s.t.: }\; \sum^n_{i=1}x_i = 1,\; x_i\ge 0 \; \forall i\in[n],
\end{aligned}
\end{equation}
where $q^\alpha$ is the $\alpha$-quantile of the standard Gaussian distribution.
\end{example}

\begin{example}[jointChance \cite{luedtke2021-stoc-approx-frontier-chance-constr-nonlinear-program}]
\label{exp:joint_chance}
  This is an example of the $\ell_1$-norm 
  optimization with a joint chance constraint
\beq
\max \;\sum^n_{i=1}x_i\;\; \text{s.t. }\mb{P}\left\{\sum^n_{i=1}\xi^2_{ij}x^2_i\le U,\; j=1,\ldots,m \right\}\ge 1-\alpha,\; x_i\ge 0\;\forall i\in[n],
\eeq
where $\xi_{ij}$s are dependent normal random variables with mean $j/m$ and variance 1, and 
$\text{cov}(\xi_{ij},\xi_{i^\prime j})=0.5$, $\text{cov}(\xi_{ij},\xi_{i^\prime j^\prime})=0$ if $j\neq j^\prime$.
We set $m=5$ in our numerical experiments. 
\end{example}
Note that the above three examples have also been used in  
\cite{wachter2020-chance-constr-prob-smooth-sample-nonlinear-approx,
luedtke2021-stoc-approx-frontier-chance-constr-nonlinear-program} 
to test the performance of their methods. 
We adopted the following settings in the implementation of \Cref{alg:trust-region-merit-func,alg:penalty} for practical applications:
\begin{enumerate}
  	\item Since the sample size \eqref{eqn:N-cond-as-converg} and \eqref{eqn:N_cond_kappa_alpha} 
	 required for theoretical convergence can be prohibited,
	 we only let the sample size grow to a pre-specified maximum level ($N=10,000$ for example) 
	 and do not include additional samples in further iterations in the implementation.  
	\item The theoretical convergence of our algorithms is independent of the choice of Hessian matrix approximation. 
	In the implementation we adopt a neighborhood sampling method used by the POUNDERS derivative-free optimization method~\cite{SWCHAP14}
	to build a local quadratic model and extract the Hessian of the quadratic model as an estimation.
	\item The $\beta$ value (step size) for the finite-difference estimation of
    the empirical quantile function is set to a constant throughout the algorithm.  
    Theoretical convergence of the algorithm requires $\beta\to{0}$, however, it has to be supported
    by a sufficiently large sample size $N$. Since it is impractical to let $N\to\infty$ in the implementation,
    $\beta$ cannot be too small. In \Cref{tab:stepsize_test}, we compare the results from using different values for $\beta$.
\end{enumerate}
Other parameters in the two algorithms are
$\theta_\mu=0.5$ (the quadratic-penalty-reduction parameter), 
$\inrate=2.0$, $\derate=0.5$ (trust-region-size-control parameters),
and $\eta_1=0.1$, $\eta_2=0.25$ (trial move acceptance parameters). 
The finite-difference parameters $\beta=1.0\times 10^{-3}$ for experiments reported in \Cref{tab:comp_perf}.
Note that since the two methods are very different, comparison of the computational time
is not very sensible as they have very different setup in parameters which matters in determining
the overall performance. Therefore the computational time provided in \Cref{tab:comp_perf} is mainly for reference. 
The termination value of the trust-region radius $r^n$ and constraint-violation tolerance 
$\eta^n$ in \Cref{alg:penalty} are fixed to $10^{-5}$ for each iteration.
\begin{table}
\setlength{\tabcolsep}{3pt}
\centering
\captionsetup{font=scriptsize}
\caption{Computational performance of \Cref{alg:penalty} (labeled `finite difference') 
and quantile smoothing method 
\cite{wachter2020-chance-constr-prob-smooth-sample-nonlinear-approx} (labeled `smoothing')
on example problem instances using sample sizes $N=5,000, 10,000,$ and $20,000$.
For both methods, the final objective values reported in the table are evaluated
at the returned solution based on the same set of 50,000 samples.
\label{tab:comp_perf} }
\small
 \resizebox{\columnwidth}{!}{
\begin{tabular}{rrr|rr|rr|rr|rr|rr|rr}
\hline\hline
\multicolumn{3}{c|}{} & \multicolumn{4}{c|}{$N=5,000$}  &
\multicolumn{4}{c|}{$N=10,000$}  &  \multicolumn{4}{c}{$N=20,000$} \\
\hline
\multicolumn{3}{c|}{} & \multicolumn{2}{c|}{finite difference} & \multicolumn{2}{c|}{smoothing} & \multicolumn{2}{c|}{finite difference} & \multicolumn{2}{c|}{smoothing} & \multicolumn{2}{c|}{finite difference} & \multicolumn{2}{c}{smoothing} \\
\hline
Ex. &	dim	&	$\alpha$	&	time	&	obj	&	time	&	obj	&	time	&	obj	&	time	&	obj	&	time	&	obj	&	time	&  obj	\\
\hline
\ref{exp:nonconvex_1d}	&	1	&	0.05	&	0.1	&	0.0075	&	0.1	&	0.0078	&	0.1	&	-1.3069	&	0.1	&	-0.1396	&	0.1	&	-1.2983	&	0.1	&	0.2382	\\
\ref{exp:nonconvex_1d}	&	1	&	0.1	&	0.1	&	-4.5711	&	0.1	&	-4.5704	&	0.1	&	-4.5788	&	0.1	&	-4.5632	&	0.1	&	-4.5787	&	0.1	&	-4.1222	\\
\ref{exp:nonconvex_1d}	&	1	&	0.15	&	0.1	&	-8.8185	&	0.1	&	-8.8184	&	0.1	&	-7.5501	&	0.1	&	-7.0628	&	0.1	&	-8.8633	&	0.1	&	-7.0628	\\
\ref{exp:portfolio}	&	50	&	0.05	&	0.5	&	1.2266	&	0.1	&	1.2259	&	0.8	&	1.2271	&	0.1	&	1.2240	&	1.1	&	1.2278	&	0.2	&	1.2243	\\
\ref{exp:portfolio}	&	50	&	0.1	&	0.5	&	1.2445	&	0.1	&	1.2436	&	0.6	&	1.2451	&	0.1	&	1.2444	&	1.2	&	1.2455	&	0.1	&	1.2452	\\
\ref{exp:portfolio}	&	50	&	0.15	&	0.4	&	1.2568	&	0.2	&	1.2558	&	0.7	&	1.2576	&	0.1	&	1.2574	&	1.2	&	1.2587	&	0.2	&	1.2571	\\
\ref{exp:portfolio}	&	100	&	0.05	&	0.9	&	1.2502	&	0.3	&	1.2497	&	1.5	&	1.2513	&	0.4	&	1.2497	&	2.6	&	1.2508	&	0.3	&	1.2503	\\
\ref{exp:portfolio}	&	100	&	0.1	&	0.8	&	1.2630	&	0.3	&	1.2605	&	1.3	&	1.2645	&	0.3	&	1.2606	&	2.3	&	1.2639	&	0.3	&	1.2607	\\
\ref{exp:portfolio}	&	100	&	0.15	&	0.9	&	1.2741	&	0.3	&	1.2728	&	1.5	&	1.2754	&	0.3	&	1.2724	&	2.3	&	1.2759	&	0.3	&	1.2757	\\
\ref{exp:portfolio}	&	150	&	0.05	&	1.9	&	1.2615	&	0.6	&	1.2603	&	2.6	&	1.2623	&	0.5	&	1.2601	&	4.2	&	1.2621	&	0.6	&	1.2611	\\
\ref{exp:portfolio}	&	150	&	0.1	&	1.6	&	1.2738	&	0.5	&	1.2738	&	2.7	&	1.2751	&	0.5	&	1.2739	&	3.8	&	1.2747	&	0.5	&	1.2744	\\
\ref{exp:portfolio}	&	150	&	0.15	&	1.7	&	1.2831	&	0.5	&	1.2816	&	2.6	&	1.2844	&	0.5	&	1.2838	&	5.2	&	1.2840	&	0.5	&	1.2832	\\
\ref{exp:portfolio}	&	200	&	0.05	&	3.1	&	1.2687	&	1.0	&	1.2681	&	4.3	&	1.2697	&	0.8	&	1.2694	&	6.2	&	1.2700	&	0.8	&	1.2698	\\
\ref{exp:portfolio}	&	200	&	0.1	&	2.0	&	1.2811	&	1.2	&	1.2812	&	3.4	&	1.2814	&	0.8	&	1.2798	&	5.8	&	1.2816	&	0.7	&	1.2810	\\
\ref{exp:portfolio}	&	200	&	0.15	&	2.1	&	1.2900	&	0.9	&	1.2887	&	4.0	&	1.2896	&	0.7	&	1.2896	&	6.0	&	1.2894	&	0.7	&	1.2897	\\
\ref{exp:joint_chance}	&	10	&	0.05	&	4.6	&	10.6254	&	5.4	&	10.6518	&	12.4	&	10.6457	&	11.2	&	10.6443	&	13.6	&	10.6109	&	22.2	&	10.6155	\\
\ref{exp:joint_chance}	&	10	&	0.1	&	2.5	&	9.7844	&	5.2	&	9.8126	&	6.9	&	9.7675	&	14.3	&	9.7819	&	14.4	&	9.7727	&	29.3	&	9.7863	\\
\ref{exp:joint_chance}	&	10	&	0.15	&	4.0	&	9.2730	&	5.4	&	9.2873	&	15.9	&	9.2666	&	10.2	&	9.2747	&	16.6	&	9.2644	&	17.5	&	9.2763	\\
\ref{exp:joint_chance}	&	20	&	0.05	&	5.3	&	22.1048	&	8.9	&	22.1455	&	11.3	&	22.0936	&	16.5	&	22.1071	&	37.3	&	22.0292	&	24.5	&	22.0510	\\
\ref{exp:joint_chance}	&	20	&	0.1	&	2.7	&	20.5364	&	7.5	&	20.5118	&	6.2	&	20.5726	&	15.2	&	20.5503	&	20.7	&	20.5280	&	23.4	&	20.4961	\\
\ref{exp:joint_chance}	&	20	&	0.15	&	3.5	&	19.5481	&	8.2	&	19.5869	&	5.0	&	19.5241	&	18.2	&	19.5604	&	12.0	&	19.4399	&	17.6	&	19.4709	\\
\ref{exp:joint_chance}	&	30	&	0.05	&	9.6	&	33.1786	&	9.7	&	33.4028	&	12.8	&	33.2837	&	24.2	&	33.4344	&	16.0	&	33.3910	&	22.0	&	33.4280	\\
\ref{exp:joint_chance}	&	30	&	0.1	&	4.0	&	30.9151	&	7.7	&	31.0496	&	6.2	&	30.9307	&	15.3	&	31.0203	&	15.4	&	31.0541	&	16.8	&	31.0904	\\
\ref{exp:joint_chance}	&	30	&	0.15	&	4.4	&	29.4587	&	8.2	&	29.5399	&	5.6	&	29.4262	&	8.6	&	29.4464	&	14.4	&	29.4763	&	17.0	&	29.4982	\\
\ref{exp:joint_chance}	&	40	&	0.05	&	6.2	&	44.3838	&	15.2	&	44.6468	&	19.6	&	44.5109	&	19.2	&	44.6539	&	27.9	&	44.4231	&	24.9	&	44.5204	\\
\ref{exp:joint_chance}	&	40	&	0.1	&	5.9	&	41.8184	&	10.7	&	41.8876	&	12.9	&	41.7944	&	15.1	&	41.7689	&	17.3	&	41.5963	&	32.4	&	41.5188	\\
\ref{exp:joint_chance}	&	40	&	0.15	&	6.1	&	39.7211	&	9.8	&	39.8218	&	21.0	&	39.6720	&	13.6	&	39.7234	&	15.1	&	39.5076	&	30.0	&	39.5701	\\
\hline
\end{tabular}}

\end{table}

All problem instances are generated from the three examples above by specifying
the problem dimension (for `portfolio' and `jointChance') and the risk value $\alpha$;
each instance is specified by the first three columns in \Cref{tab:comp_perf}.  
Two sets of numerical experiments were performed. 
First, we study the computational performance of the augmented Lagrangian method
with the empirical quantile value estimation and finite-difference estimation of the quantile gradient (ALM-quant)
on the problem instances with different sample sizes ($N=5,000,\; 10,000
\text{ and } 20,000$). We compare the performance of our method with 
a recasted version of the method from 
\cite{wachter2020-chance-constr-prob-smooth-sample-nonlinear-approx}, 
which is the most relevant one in terms of reformulating chance constraints. 
In \cite{wachter2020-chance-constr-prob-smooth-sample-nonlinear-approx},
a smoothing method is developed to estimate the gradient of the quantile function $\nabla\Qx$. 
In particular, it estimates $\nabla\Qx$ as 
\beq\label{eqn:grad_q}
\nabla{q(x)}=\frac{\sum^N_{i=1}\Gamma^\prime_\epsilon(c(x,\xi_i)-Q_{\epsilon,N}(x))\nabla{c(x,\xi_i)}}
{\sum^N_{i=1}\Gamma^\prime_\epsilon(c(x,\xi_i)-Q_{\epsilon,N}(x))},
\eeq
where $\Gamma_\epsilon$ is a smoothing function defined as
\bdm
\Gamma_\epsilon(y)=
	\begin{cases}
		1, \qquad y\le-\epsilon \\
		\gamma_\epsilon(y), \quad -\epsilon<y<\epsilon \\
		0,  \qquad y\ge\epsilon \\
	\end{cases}
\edm
and 
\bdm
\gamma_\epsilon(y)=\frac{15}{16}\left(-\frac{1}{5}\left(\frac{y}{\epsilon}\right)^5 + \frac{2}{3}\left(\frac{y}{\epsilon}\right)^3
	-\left(\frac{y}{\epsilon}\right) + \frac{8}{15} \right),
\edm
and $\Gamma^\prime_\epsilon$ is the derivative of $\Gamma_\epsilon$.
The $Q_{\epsilon,N}(x)$ in \eqref{eqn:grad_q} is a smoothed quantile value. 
To obtain $Q_{\epsilon,N}(x)$,
one needs to numerically solve a nonlinear equation involving $\Gamma_\epsilon$.  
This is not convenient to implement, and therefore we replace $Q_{\epsilon,N}(x)$ with
the empirical quantile $\wh{Q}^{1-\alpha}_N(x)$ in the computation of $\nabla{q}(x)$.
Note that \eqref{eqn:grad_q} for computing the gradient of the smoothed quantile function
can be used in many different optimization algorithms.
In \cite{wachter2020-chance-constr-prob-smooth-sample-nonlinear-approx}, 
it is used in an interior-point method to carry out the optimization. 
To compare it with \Cref{alg:penalty}, we use it in the augmented Lagrangian method 
(the same base method as in \Cref{alg:penalty}), and hence only the way 
of computing the quantile gradient is different between \Cref{alg:penalty} and 
the smoothing method. The candidate values of $\epsilon$ is 
the same as $\beta$ in our comparison.
The computational results of our method and the smoothing method are 
in \Cref{tab:comp_perf}.

For each method of estimating the gradient of the quantile function (finite difference vs. smoothing), 
it turns out that the final objective values yielded by the method
are not very sensitive to the increase of sample size from 5000 to 20,000.
However, a larger sample size often leads to longer computational time, 
which is expected because the quantile estimation and gradient calculation can take more time.
Overall, the two methods are competitive in terms of the computational time and the quality of the solution. 
The finite-difference method gives better results for the nonconvex1D problem instances.
For the portfolio problem instances, the objective values obtained by the two methods are very close,
but the results generated by the finite-difference method are slightly better.
The smoothing method gives better results for the joint-chance problem in 30 out of 36 instances.
It means that the smoothing method has some advantage in joint-chance constrained problems
with higher-dimensional random parameters.

Since the portfolio problem can be reformulated as a convex optimization problem \eqref{eqn:portfolio-convex-reform},
its global optimal objective is obtainable, and we can further study the optimality
gap of the best objective identified by the ALM-quant. 
The results are summarized in \Cref{tab:opt-gap}. 
The optimality gap is in the range of $0.06\%\sim 0.18\%$,
indicating that the ALM-quant method leads to high-quality solutions for this specific category of problem instances.

\begin{table}
\centering
\captionsetup{font=scriptsize}
\caption{Comparison between the optimal objective and the objective value identified by
the ALM-quant method with the setting the same as for the \Cref{tab:comp_perf}
and $N=10,000$. The final objective values reported in the table are evaluated
at the returned solution based on the same set of 50,000 samples.}
\label{tab:opt-gap}
\scriptsize
\begin{tabular}{rrr|rrr}
\hline\hline
Ex. &	dim	&	risk	&	opt obj	&	best obj	&	gap(\%)	\\
\hline
\ref{exp:portfolio}	&	50	&	0.05	&	1.2291	&	1.2271	&	0.16272	\\
\ref{exp:portfolio}	&	50	&	0.1	&	1.2468	&	1.2451	&	0.13595	\\
\ref{exp:portfolio}	&	50	&	0.15	&	1.2600	&	1.2576	&	0.18667	\\
\ref{exp:portfolio}	&	100	&	0.05	&	1.2521	&	1.2513	&	0.06341	\\
\ref{exp:portfolio}	&	100	&	0.1	&	1.2666	&	1.2645	&	0.16651	\\
\ref{exp:portfolio}	&	100	&	0.15	&	1.2773	&	1.2754	&	0.14570	\\
\ref{exp:portfolio}	&	150	&	0.05	&	1.2637	&	1.2623	&	0.10825	\\
\ref{exp:portfolio}	&	150	&	0.1	&	1.2765	&	1.2751	&	0.11148	\\
\ref{exp:portfolio}	&	150	&	0.15	&	1.2860	&	1.2844	&	0.12309	\\
\ref{exp:portfolio}	&	200	&	0.05	&	1.2711	&	1.2697	&	0.10794	\\
\ref{exp:portfolio}	&	200	&	0.1	&	1.2829	&	1.2814	&	0.11755	\\
\ref{exp:portfolio}	&	200	&	0.15	&	1.2915	&	1.2896	&	0.14704	\\
\hline
\end{tabular}

\end{table}

For the second batch of experiments, we focus on testing the impact of the finite-difference parameter $\beta$
on the quality of solution for the problem instances 
with $\alpha$ being 0.05, 0.1 and 0.15 and sample size $N$ being 5,000 and 10,000, respectively. 
For each problem instance, 
five different finite-difference parameters were considered
($\beta=1.0\times 10^{-4},\;5.0\times 10^{-4},\; 1.0\times 10^{-3}\; 5.0\times 10^{-3}\text{ and }1.0\times 10^{-2}$),
and the objective values identified by the ALM-quant under these $\beta$ values are reported in
\Cref{tab:stepsize_test}. All other parameters are set 
the same as the previous experiments. It can be observed that for most of the portfolio and joint-chance 
problem instances, the objective values yielded by
smaller $\beta$ values (i.e., $1\times10^{-4}$ and $5\times10^{-4}$) are less competitive as
that yielded by larger $\beta$ values (i.e., $1\times10^{-3}$, $5\times10^{-3}$ and $1\times10^{-2}$). 
To interpret this outcome, we realize that 
there are two sources of errors: the randomness error of quantile evaluation and the numerical
error of finite differencing for the gradient estimation. The randomness error by itself only depends 
on the sample size but it will be magnified according to the step size when it is propagated into
the quantile-gradient estimation. That is, the contribution of the randomness error in the quantile-gradient
estimation is roughly $\delta/\beta$, where $\delta$ is the randomness error of the quantile estimation.
Therefore, while reducing the finite-difference parameter can decrease the numerical error 
of gradient estimation in the finite difference calculation (in the case that the function value is error free),
the randomness error can be magnified significantly in the end. This implies that a relatively larger
finite-difference parameter can lead to a better quantile gradient estimation and hence a better solution.
From a different angle, having a larger finite-difference parameter can be interpreted as an implicit smoothing.

\begin{table}
\centering
  \setlength{\tabcolsep}{3pt}
\captionsetup{font=scriptsize}
\caption{\scriptsize Comparison of objective values identified by 
\Cref{alg:penalty} with five different $\beta$ parameters for computing the finite difference. 
We consider sample size $N=5000$ and $N=10,000$ for all the three numerical examples in this investigation.}\label{tab:stepsize_test}
\tiny
\resizebox{\columnwidth}{!}{
\begin{tabular}{rrr|ccccc|ccccc}
\hline\hline
\multicolumn{3}{c|}{} & \multicolumn{5}{c|}{$N=5000$}  & \multicolumn{5}{c}{$N=10000$}  \\
\hline
 Ex. & dim	& $\alpha$ & 1e-4 & 5e-4 & 1e-3 & 5e-3 & 1e-2 & 1e-4 & 5e-4 & 1e-3 & 5e-3 & 1e-2	\\
\hline
\ref{exp:nonconvex_1d}	&	1	&	0.05	&	0.0075	&	0.0075	&	0.0075	&	0.0078	&	0.0088	&	-1.2063	&	-1.2288	&	-1.3069	&	-0.1396	&	0.0834	\\
\ref{exp:nonconvex_1d}	&	1	&	0.1	&	-4.5710	&	-4.5710	&	-4.5709	&	-4.5710	&	-4.5711	&	-4.5788	&	-4.5788	&	-4.5788	&	-4.5787	&	-4.5788	\\
\ref{exp:nonconvex_1d}	&	1	&	0.15	&	-8.8184	&	-8.8184	&	-8.8185	&	-7.5508	&	-7.0628	&	-7.0628	&	-7.0628	&	-7.0628	&	-7.5500	&	-7.5501	\\
\ref{exp:portfolio}	&	50	&	0.05	&	1.2137	&	1.2134	&	1.2166	&	1.2255	&	1.2266	&	1.2008	&	1.2177	&	1.2221	&	1.2271	&	1.2265	\\
\ref{exp:portfolio}	&	50	&	0.1	&	1.2219	&	1.2317	&	1.2373	&	1.2445	&	1.2445	&	1.2238	&	1.2386	&	1.2386	&	1.2451	&	1.2440	\\
\ref{exp:portfolio}	&	50	&	0.15	&	1.2311	&	1.2457	&	1.2430	&	1.2556	&	1.2568	&	1.2416	&	1.2294	&	1.2534	&	1.2576	&	1.2566	\\
\ref{exp:portfolio}	&	100	&	0.05	&	1.2295	&	1.2220	&	1.2324	&	1.2475	&	1.2502	&	1.2281	&	1.2312	&	1.2365	&	1.2489	&	1.2513	\\
\ref{exp:portfolio}	&	100	&	0.1	&	1.2333	&	1.2482	&	1.2553	&	1.2630	&	1.2609	&	1.2441	&	1.2533	&	1.2542	&	1.2645	&	1.2645	\\
\ref{exp:portfolio}	&	100	&	0.15	&	1.2485	&	1.2542	&	1.2647	&	1.2741	&	1.2713	&	1.2582	&	1.2663	&	1.2653	&	1.2740	&	1.2754	\\
\ref{exp:portfolio}	&	150	&	0.05	&	1.2331	&	1.2295	&	1.2321	&	1.2577	&	1.2615	&	1.2428	&	1.2358	&	1.2544	&	1.2598	&	1.2623	\\
\ref{exp:portfolio}	&	150	&	0.1	&	1.2371	&	1.2525	&	1.2593	&	1.2738	&	1.2734	&	1.2444	&	1.2599	&	1.2678	&	1.2743	&	1.2751	\\
\ref{exp:portfolio}	&	150	&	0.15	&	1.2503	&	1.2641	&	1.2723	&	1.2824	&	1.2831	&	1.2571	&	1.2654	&	1.2752	&	1.2836	&	1.2844	\\
\ref{exp:portfolio}	&	200	&	0.05	&	1.2467	&	1.2381	&	1.2387	&	1.2629	&	1.2687	&	1.2460	&	1.2460	&	1.2491	&	1.2692	&	1.2697	\\
\ref{exp:portfolio}	&	200	&	0.1	&	1.2480	&	1.2538	&	1.2577	&	1.2772	&	1.2811	&	1.2439	&	1.2624	&	1.2721	&	1.2809	&	1.2814	\\
\ref{exp:portfolio}	&	200	&	0.15	&	1.2597	&	1.2762	&	1.2763	&	1.2859	&	1.2900	&	1.2646	&	1.2764	&	1.2757	&	1.2885	&	1.2896	\\
\ref{exp:joint_chance}	&	10	&	0.05	&	10.5462	&	10.5663	&	10.5982	&	10.6054	&	10.6254	&	10.6323	&	10.6136	&	10.6123	&	10.6326	&	10.6457	\\
\ref{exp:joint_chance}	&	10	&	0.1	&	9.6923	&	9.7922	&	9.7603	&	9.7844	&	9.7577	&	9.7804	&	9.7730	&	9.7575	&	9.7675	&	9.7530	\\
\ref{exp:joint_chance}	&	10	&	0.15	&	9.2522	&	9.2726	&	9.2730	&	9.2590	&	9.2505	&	9.2675	&	9.2651	&	9.2627	&	9.2640	&	9.2666	\\
\ref{exp:joint_chance}	&	20	&	0.05	&	22.1026	&	22.0852	&	22.0967	&	22.0697	&	22.1048	&	22.0360	&	21.9819	&	22.0714	&	22.0936	&	22.0892	\\
\ref{exp:joint_chance}	&	20	&	0.1	&	20.5542	&	20.5447	&	20.5364	&	20.3174	&	20.4485	&	20.5292	&	20.4929	&	20.5153	&	20.5726	&	20.5695	\\
\ref{exp:joint_chance}	&	20	&	0.15	&	19.5518	&	19.4862	&	19.5125	&	19.5481	&	19.5386	&	19.4471	&	19.5249	&	19.3867	&	19.5241	&	19.5229	\\
\ref{exp:joint_chance}	&	30	&	0.05	&	33.2087	&	33.0971	&	33.1786	&	33.1337	&	33.1509	&	33.2980	&	33.2524	&	33.2837	&	33.2788	&	33.2626	\\
\ref{exp:joint_chance}	&	30	&	0.1	&	30.9020	&	30.9052	&	30.9151	&	30.9065	&	30.8984	&	30.9187	&	30.9345	&	30.9072	&	30.9307	&	30.9109	\\
\ref{exp:joint_chance}	&	30	&	0.15	&	29.4373	&	29.4598	&	29.3957	&	29.4587	&	29.4254	&	29.3760	&	29.4152	&	29.4090	&	29.4262	&	29.3935	\\
\ref{exp:joint_chance}	&	40	&	0.05	&	44.3435	&	44.3007	&	44.2901	&	44.3741	&	44.3838	&	44.3854	&	44.4181	&	44.4693	&	44.5109	&	44.4467	\\
\ref{exp:joint_chance}	&	40	&	0.1	&	41.8191	&	41.8221	&	41.8184	&	41.8074	&	41.7176	&	41.7392	&	41.7637	&	41.7765	&	41.7944	&	41.7490	\\
\ref{exp:joint_chance}	&	40	&	0.15	&	39.7135	&	39.7125	&	39.7150	&	39.7123	&	39.7211	&	39.6771	&	39.6703	&	39.6643	&	39.6720	&	39.6550	\\
\hline
\end{tabular}}

\end{table}

\section{Concluding Remarks}
The finite-difference estimation of the quantile gradient has been incorporated
into an augmented Lagrangian method coupled with a trust-region algorithm to approach 
the nonlinear optimization problem with chance constraints. 
Convergence analysis has been established for this approach
and numerical results show that this approach is competitive with the explicitly smoothing method for
estimating the gradient of the quantile function for the problem instances considered. 
It is worth remarking that the augmented Lagrangian method serves as a carrier for
the estimation of quantile-function values and gradients. 
The estimation can certainly be used in other
algorithms for constrained optimization such as the interior-point method, and it can 
be directly used in NLP solvers. The performance of solving nonlinear chance-constrained 
problem instances in practice is a combination of the solver performance, 
the estimation accuracy of quantile values and quantile gradients, the sampling techniques,
smoothing techniques, and other ad-hoc strategies, which require additional empirical investigation.

\section*{Acknowledgment}
Careful comments from three anonymous referees led to substantial improvements of the main theoretical result and quality of the
numerical section.
This work was supported by the U.S.~Department of Energy, Office of Science,
Office of Advanced Scientific Computing Research, Scientific Discovery through
Advanced Computing (SciDAC) Program through the FASTMath Institute under
Contract No.~DE-AC02-06CH11357.

\bibliographystyle{siamplain}
\bibliography{references}

\framebox{\parbox{.90\linewidth}{\scriptsize The submitted manuscript has been created by
        UChicago Argonne, LLC, Operator of Argonne National Laboratory (``Argonne'').
        Argonne, a U.S.\ Department of Energy Office of Science laboratory, is operated
        under Contract No.\ DE-AC02-06CH11357.  The U.S.\ Government retains for itself,
        and others acting on its behalf, a paid-up nonexclusive, irrevocable worldwide
        license in said article to reproduce, prepare derivative works, distribute
        copies to the public, and perform publicly and display publicly, by or on
        behalf of the Government.  The Department of Energy will provide public access
        to these results of federally sponsored research in accordance with the DOE
        Public Access Plan \url{http://energy.gov/downloads/doe-public-access-plan}.}}
\end{document}